\newtheorem{theorem}{Theorem}[section]
\newtheorem{lemma}[theorem]{Lemma}
\newtheorem{corollary}[theorem]{Corollary}
\newtheorem{proposition}[theorem]{Proposition}
\theoremstyle{remark}
\newtheorem{remark}[theorem]{Remark}
\theoremstyle{definition}
\numberwithin{equation}{section}
\DeclareMathOperator{\Cdb}{{\mathbb C}}
\DeclareMathOperator{\Rdb}{{\mathbb R}}
\DeclareMathOperator{\Hdb}{{\mathbb H}}
\DeclareMathOperator{\Ndb}{{\mathbb N}}
\begin{document}

\title[Roots in operator  and Banach algebras]{Roots
in operator  and Banach algebras}
\thanks{Supported by a grant from the NSF} 
\author{David P. Blecher}
\address{Department of Mathematics \\ University of Houston \\ Houston \\TX
77204-3008, USA}
\email{dblecher@math.uh.edu}

\author{Zhenhua Wang}
\address{Department of Mathematics \\ University of Houston \\ Houston, TX
77204-3008, USA}
\email{zhenwang@math.uh.edu}

\subjclass{Primary  47A64, 47L10, 47L30, 47B44;
 Secondary   15A24, 15A60, 47A12, 47A60, 47A63, 49M15, 65F30}
\keywords{Roots, fractional powers, geometric mean, sign of operator, Newton method for roots, binomial 
method for square root, accretive 
operator, sectorial operator, nonselfadjoint operator algebra, numerical range, functional calculus}

\begin{abstract}  We show that  several  known facts concerning roots of  matrices generalize to operator algebras and Banach algebras.   We show for example that the so-called Newton,
binomial, Visser, and Halley iterative methods converge to the root in Banach and operator algebras under various 
mild hypotheses.  
We also show that the `sign' and `geometric mean' of matrices generalize
 to Banach and operator algebras, and we investigate their properties.   We also establish some other facts about roots in this setting.
  \end{abstract}

\maketitle

\dedicatory{In memoriam Charles Read--gentleman, brother,  mathematical force of nature.}
\section{Introduction} 
An {\em operator algebra} is a closed subalgebra of $B(H)$, for a
complex Hilbert space $H$. In this paper we show that  several  known facts concerning roots of  matrices generalize to 
 operator algebras and Banach algebras.     We begin by establishing some basic 
properties of roots that do not seem to be in the literature (although they may be known to some experts), 
as well as reviewing some that 
are.  We then show   that the `sign' of a matrix generalizes to Banach algebras, and that 
Drury's variant of the `geometric mean' of matrices
generalizes to operators on a Hilbert space (we also generalize his definition
slightly), and prove some basic facts about these.   We also show that the so-called  Newton (or Babylonian),
binomial, Visser, and Halley iterative methods for the root
converge to the root in Banach and operator algebras under various 
mild hypotheses inspired by the matrix theory literature.        Some parts of our paper are fairly literal 
transfers of matrix results to the operator or Banach algebraic setting, using known  
tricks or standard theory, and here we will
try to be brief.  However we have not seen these in the literature and they seem quite useful.  
For example our results, particularly probably 
the geometric mean, should be applicable to our ongoing study 
of `real positivity' in operator algebras (see e.g.\ \cite{BRI,BRII,Bord,BOZ,BSan} and references therein) initiated by the first author and Charles Read.

Turning to background and notation, it is common when studying roots to make the assumption that the spectrum contains no strictly 
negative numbers.  Note that a singular matrix with no strictly 
negative eigenvalues, may not have a square root
(for example,
$E_{12}$ in $M_2$), or may have a square root but not have 
a square root in $\{ x \}''$ (for example, 
$E_{12}$ in $M_3$, which has many square roots
including $E_{13} + E_{32}$), or may have infinitely  many 
square roots in $\{ x \}''$ (for example, $0$ in an algebra with trivial product).   However in a Banach algebra
and for $p \in \Ndb$,
any element $x$ of type $M$ (defined below; this is also sometimes  called being 
`sectorial', see e.g. \cite{Haase}, but there is ambiguity in the literature), and also any element whose  (closed)
numerical range  (defined below)
contains no strictly negative numbers,
has a unique $p$th root with spectrum in a sector $S_{\frac{\pi}{p}}$  (see \cite{Noll,LRS} and also
Theorem \ref{lrs} below), 
and this root is in the closed subalgebra generated by $x$, which in turn is a subset of the second commutant
$\{ x \}''$.
 Here
  $S_{\theta}$ is the set of complex numbers with argument in $[-\theta, \theta]$.     
Thus we will usually (but not always) 
take roots of elements  with no strictly negative numbers in its numerical range.  Indeed sometimes
we will require the  numerical range to be in $S_\theta$ for some $\theta < \pi$.

A {\em unital}  Banach algebra has an identity of norm $1$.
The {\em states} of $A$ are the norm 1 functionals $\varphi$ on $A$ 
with $\varphi(1) = 1$, they comprise the {\em state space} $S(A)$,
and the {\em numerical range} \cite{BoNR1} is
$$W(x) = \{ \varphi(x) : \varphi \in S(A) \}, \qquad x \in A .$$
This is a convex and compact set of scalars.   Some authors use not necessarily closed
versions of the numerical range, such as $\{ \langle x \zeta , \zeta \rangle : \zeta \in {\rm Ball}(H) \}$ in the case $x$ is an operator on Hilbert space $H$, but since these are dense in our $W(x)$ we avoid these.  
Let ${\mathbb H}$ denote the open right half plane, with
$\overline{{\mathbb H}}$  the closed right half plane.  We write
${\mathfrak r}_A$  for the {\em accretive} (or `real-positive')  elements in a unital Banach  algebra  $A$, i.e.\ those
elements $x$ 
with numerical range $W(x)$ in $\overline{{\mathbb H}}$.  
We say that $x$ is {\em strictly accretive} if its
numerical range is in ${\mathbb H}$.
 In a possibly nonunital operator algebra $A$ on a 
Hilbert space $H$ 
there is a unique unitization by Meyer's theorem   (see
 \cite[Section 2.1]{BLM}), which we can take to be $A + \Cdb I_H$.  Here 
we can define ${\mathfrak r}_A = A \cap {\mathfrak r}_{A^1}$, and we have 
${\mathfrak r}_A = \{ x \in A : x + x^* \geq 0 \}$.  Also, for invertible $a$,
the spectrum of $a$ is in the right half plane if and only if 
the spectrum of $a^{-1}$ is in the right half plane.
We write ${\rm Ball}(X)$ for the set $\{ x \in X : \Vert x \Vert \leq 1 \}$, and set 
$${\mathfrak F}_A = \{ x \in A : \Vert 1 - x \Vert \leq 1 \} = 1 + {\rm Ball}(A)$$
for a unital Banach  algebra  $A$.   There is an associated cone
$${\mathfrak c}_A =  \Rdb^+ {\mathfrak F}_A, $$ and we have
(see \cite{BOZ}) 
 $${\mathfrak r}_{A} = \overline{\Rdb^+ {\mathfrak F}_A}.$$ 

By a {\em root} we mean a fractional power $x^r$ where $r = \frac{1}{n}$ for $n \in \Ndb$.
See \cite[Section 6]{BSan} for a review of these.  
An element $x$ of a unital Banach algebra $A$ whose
spectrum contains no real negative numbers nor $0$,
has a unique principal $n$th root in $\{ x \}''$ for all $n \in \Ndb$;
that is a unique $n$th root with spectrum in the interior of the sector
$S_{\frac{\pi}{n}}$; hence a unique square root with spectrum in the
open right half plane ${\mathbb H}$ (see \cite[p.\ 360]{Pal} for the
square root case, which can be easily adapted for the $n$th root).
  We note that if $x$ is an element  of $A$ whose  numerical range $W(x)$ 
satisfies $W(x) \subset S_\theta$ for some $\theta < \pi$
then the formula of Stampfli and Williams  \cite[Lemma 1]{SW}
and some basic trigonometry shows that $x$ is sectorial of angle $\theta < \pi$ in the sense of e.g.\ \cite{Haase},
so that all the facts about roots of sectorial operators from that text apply.

Note that if $a$ is invertible then ${\rm Sp}(a^{-1}) = \{ \lambda^{-1} : \lambda \in {\rm Sp}(a) \}$, 
so that    we 
have $(a^{-1})^{\frac{1}{2}} = (a^{\frac{1}{2}})^{-1}$ if
Sp$(a)$ contains no real negative numbers.     This follows from  the unicity of principal roots mentioned above,
 because both
have spectrum in a sector of angle $< \frac{\pi}{2}$.

It is well known that the accretive elements are closed under roots, or $r$th powers for $r \in (0,1)$.   
 Note too that $a \in {\mathfrak c}_A$ implies that $a^r 
\in {\mathfrak c}_A$ for such $r$.   This is because  ${\mathfrak F}_A$ 
is closed under such powers (see e.g.\ \cite[Proposition 6.3]{BSan}).   Also,  in an operator algebra
if $W(a) \subset S_\theta$ for $\theta < \pi$ then $W(a^r) \subset S_{r \theta}$  for $0 \leq r \leq 1$
(see e.g.\ \cite[Corollary 4.6]{BBS}
for a more general result).

We will use Crouzeix's analytic functional calculus (e.g.\ \cite[Theorem 2.1]{Crou08}).    Crouzeix showed 
for example that there is a universal constant $K$ (we will call this Crouziex's constant), which  
 is known to be smaller than 12, and is possibly 2, such that $\Vert f(x) \Vert \leq K \,
\sup_{z \in W(x)} |f(z)|$, for any bounded operator $x$ on a Hilbert space and rational function $f$ with no 
poles in $W(x)$.

\section{More background results}

The following is no doubt well known (the formula is in Corollary 3.1.14 or 3.2.1 (d) in \cite{Haase} in the 
case $x$ is sectorial), and its proof follows a standard route.  For example, it 
 is similar to but easier than the case considered  in \cite{LRS}, but since we do not know of an explicit 
reference we
sketch the argument.

\begin{lemma}   \label{Rieszroot}   If $x$ is an invertible element in a unital Banach algebra
whose spectrum contains no real strictly negative numbers, and if $0 < \alpha < 1$, then  
$$x^{-\alpha} = \frac{\sin(\pi \alpha)}{\pi} \int_0^\infty \, t^{-\alpha} \, (t 1 + x)^{-1} \, dt.$$
In particular,
$x^{-\frac{1}{2}} = \frac{2}{\pi} \int_0^\infty \, (t^2 1 + x)^{-1} \, dt.$
\end{lemma}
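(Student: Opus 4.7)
The plan is to reduce the operator identity to the classical scalar Mellin identity
$$\lambda^{-\alpha} = \frac{\sin(\pi\alpha)}{\pi}\int_0^\infty \frac{t^{-\alpha}}{t+\lambda}\,dt, \qquad \lambda \in \Cdb\setminus(-\infty,0],$$
(with the principal branch) by feeding it through the Riesz--Dunford holomorphic functional calculus. First I would fix notation by setting $\Omega = \Cdb \setminus (-\infty,0]$; since $x$ is invertible with no strictly negative spectrum, $\Spe(x)$ is a compact subset of $\Omega$. Choose a positively oriented simple closed contour $\Gamma \subset \Omega$ enclosing $\Spe(x)$ and bounded away from $(-\infty, 0]$. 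Using the principal branch of $z^{-\alpha}$, the Dunford calculus definition gives
$$x^{-\alpha} = \frac{1}{2\pi i} \int_\Gamma z^{-\alpha}\,(z\I - x)^{-1}\,dz.$$

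Next I would verify that the operator-valued integral on the right side of the stated identity converges absolutely in norm. As $t \to 0^+$, $(t\I+x)^{-1} \to x^{-1}$ so $\|(t\I+x)^{-1}\|$ is bounded and $t^{-\alpha}$ is integrable near $0$ because $\alpha < 1$. As $t \to \infty$, writing $(t\I + x)^{-1} = t^{-1}(\I + x/t)^{-1}$ gives $\|(t\I + x)^{-1}\| = O(1/t)$, and $\alpha > 0$ ensures integrability of $t^{-\alpha - 1}$ at infinity.

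Then I would substitute the Dunford representation
$$(t\I + x)^{-1} = \frac{1}{2\pi i}\int_\Gamma \frac{1}{t+z}\,(z\I - x)^{-1}\,dz, \qquad t \geq 0,$$
(valid since $-t \notin \Spe(x)$ and $\Gamma$ surrounds $\Spe(x)$) into $\int_0^\infty t^{-\alpha}(t\I + x)^{-1}\,dt$, and swap the two integrals by Fubini. The absolute convergence needed for Fubini follows because $\Gamma$ is compact with $|t+z|$ bounded below uniformly for $z \in \Gamma$ and $t \geq 0$, the scalar integral $\int_0^\infty t^{-\alpha}|t+z|^{-1}\,dt$ is uniformly bounded for $z \in \Gamma$ by the estimates of the previous step, and $\|(z\I - x)^{-1}\|$ is bounded on $\Gamma$. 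After the swap, the inner scalar integral yields $\pi z^{-\alpha}/\sin(\pi\alpha)$ by the Mellin identity above, so multiplying by $\sin(\pi\alpha)/\pi$ recovers $\frac{1}{2\pi i}\int_\Gamma z^{-\alpha}(z\I - x)^{-1}\,dz = x^{-\alpha}$.

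The square-root special case is immediate by the substitution $t = s^2$, since $\sin(\pi/2) = 1$ gives the factor $\frac{1}{\pi}$, and $dt = 2s\,ds$ together with $t^{-1/2} = s^{-1}$ turns the integrand into $2(s^2 \I + x)^{-1}\,ds$. The only genuinely delicate step is the Fubini justification, and this is really only bookkeeping: the endpoint behavior at $t = 0$ and $t = \infty$ is exactly what forces the hypothesis $0 < \alpha < 1$, and once $\Gamma$ is chosen compact and away from the slit, everything else is uniform. I expect the scalar Mellin identity itself to be cited rather than reproven, as it is a standard keyhole-contour computation.
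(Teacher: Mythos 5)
Your argument is correct, but it is organized differently from the one in the paper. The paper works directly with a keyhole contour $\Gamma^{r,\epsilon,\theta}$ hugging the branch cut $(-\infty,0]$: it estimates away the small circular arc (radius $\epsilon\to 0$), uses the Stampfli--Williams resolvent bound to kill the large arc (radius $r\to\infty$), and obtains the factor $\sin(\pi\alpha)/\pi$ from the jump of the principal branch of $z^{-\alpha}$ across the cut, i.e.\ the two line segments contribute $t^{-\alpha}e^{\mp i\alpha\pi}$ respectively. You instead keep a fixed compact contour $\Gamma$ bounded away from the cut, represent the resolvent $(t1+x)^{-1}$ itself as a Dunford integral of $1/(t+z)$ over $\Gamma$ (legitimate, since the interior of a simple closed curve in $\mathbb{C}\setminus(-\infty,0]$ cannot meet the cut, so $-t$ lies outside $\Gamma$ for all $t\geq 0$), apply Fubini, and quote the scalar Mellin identity $\int_0^\infty t^{-\alpha}(t+\lambda)^{-1}\,dt = \pi\lambda^{-\alpha}/\sin(\pi\alpha)$ on the slit plane. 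The trade-off is clear: your route avoids the three limiting processes and the delicate branch bookkeeping of the paper, at the cost of a Fubini justification (which you handle correctly -- $|t+z|$ is bounded below by ${\rm dist}(\Gamma,(-\infty,0])$ and is $O(t)$ at infinity, so the double integral converges absolutely) and of outsourcing the keyhole computation to the scalar case, where it is classical. Your treatment of the endpoints $t\to 0^+$ and $t\to\infty$, and the substitution $t=s^2$ for the square-root formula, match the paper. Both proofs are sound; yours is arguably cleaner to check, while the paper's is self-contained at the operator level.
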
   \begin{proof}    
Let $R = \Vert x \Vert$, and choose $\theta$ less than but very close to $\pi$, and choose $\epsilon \geq  0$  small
enough so that $\Vert (x - z I)^{-1} - x^{-1} \Vert  <1$ for $|z| < \epsilon$.  Choose $r > R$.
Consider the simple closed curve $\Gamma^{r,\epsilon,\theta}$, oriented counterclockwise,  consisting of the numbers
with argument in $[-\theta,\theta]$ which are on the two circles centered at $0$ and having radii $r$ and $\epsilon$, together with  
the lines $z = \pm t e^{i \theta}$ for $\epsilon \leq t \leq r$.   By the Riesz functional calculus 
$$x^{-\alpha} =  \frac{1}{2 \pi i} \int_\Gamma \, (z 1 - x)^{-1} \, z^{-\alpha} \, dz .$$
The part of the integral over the small circular arc contributes something which
in norm is less than  $(\Vert x^{-1} \Vert + 1) \cdot \epsilon \cdot \epsilon^{-\alpha}$ to 
the integral.  But this converges to $0$ as $\epsilon \to 0$, and so letting $\epsilon \to 0$ we may replace
$\Gamma^{r,\epsilon,\theta}$ by $\Gamma^{r,0,\theta}$.  Looking at the bottom half of $\Gamma^{r,0,\theta}$, we may let
$\theta \to \pi^{-}$, and hence the line segment part of the curve may be taken to lie on the negative $x$ axis.  
However there is an issue with what becomes of $z^{-\alpha}$ as $z$ approaches the negative real axis from 
below: if $z = t e^{i \theta}$, for a number $\theta$ slightly larger than
$-\pi$, then $z^{-\alpha} = t^{-\alpha} e^{-i \alpha \theta} \to 
t^{-\alpha} (\cos(\alpha \pi) + i \sin(\alpha \pi))$.   Note that this is different to what happens with 
$z$ on the `upper line segment', here we will get a limit $t^{-\alpha} (\cos(\alpha \pi) - i \sin(\alpha \pi))$.
The integral over the `lower line segment' thus leads to a contribution of 
is $$\frac{-1}{2 \pi i} \int_0^r (-t 
  - x)^{-1} \, (\cos(\alpha \pi) + i \sin(\alpha \pi)) \, t^{-\alpha} \, 
dt .$$ 
Similarly, the  contribution from the `upper' line segment can be seen to be
$$\frac{1}{2 \pi i} \int_0^r (-t
  - x)^{-1} \, (\cos(\alpha \pi) - i \sin(\alpha \pi)) \, t^{-\alpha} \, 
dt , $$ and so the two line segments together contribute
$\frac{\sin(\alpha \pi)}{\pi} \int_0^r (t
  + x)^{-1} \,  t^{-\alpha} \, 
dt$.   
The circular part of $\Gamma$  is distance greater than $r - R$ from the numerical range of $x$ and so
by \cite[Lemma 1]{SW} it contributes at most  $\frac{r^{1-\alpha}}{r-R}$.  
But this converges to $0$ as $r \to \infty$.   
Thus letting 
$r \to \infty$ we obtain the desired formula.   If $\alpha = \frac{1}{2}$ we can 
let $u = \sqrt{t}$ to obtain the second formula.  
 \end{proof}

Most of the following is also  well known to experts (see e.g.\ \cite{ChLi,GR}).

\begin{lemma}   \label{inv}   If $A$ is a unital operator algebra on a Hilbert space and
if $x \in {\mathfrak r}_A$ is an invertible accretive operator in $A$ then
 $x^{-1}  \in {\mathfrak r}_A$.  That is, inverses of invertible accretive operators on a Hilbert space are accretive.  More generally, if 
$W(x) \subset S_\theta$ then $W(x^{-1}) \subset S_\theta$ if $0 \leq \theta
\leq \frac{\pi}{2}$ and $x$ is invertible.   Finally, if
$a$ is an invertible in $A$ which is  strictly accretive (this is equivalent for invertibles 
to being in ${\mathfrak c}_A$),
then $a^{-1}$ is  strictly accretive (or equivalently, in ${\mathfrak c}_A$).
 \end{lemma}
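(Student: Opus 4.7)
The plan is to prove the three assertions in order, with the second and third being corollaries of the first via rotation and a lower-bound refinement respectively. The key observation throughout is the algebraic identity
\[
x^{-1} + (x^{-1})^* \;=\; x^{-1}(x + x^*)(x^*)^{-1},
\]
which one checks by factoring $x^{-1}$ on the left and $(x^*)^{-1}$ on the right. The main obstacle (a small one) is to keep the sector/half-plane geometry straight in part two and to ensure the inverse genuinely lies in $A$; we handle the latter by noting that invertibility is meant within $A$ so that $x^{-1}\in A$ automatically.

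For part one, let $x\in\mathfrak{r}_A$ be invertible. In the Hilbert-space picture, $\mathfrak{r}_A = \{x\in A:x+x^*\ge 0\}$ as recalled earlier in the paper. Let $h=(x+x^*)^{1/2}\in B(H)$. The identity above gives
\[
x^{-1}+(x^{-1})^* \;=\; x^{-1}\,h\cdot h\,(x^*)^{-1} \;=\; (x^{-1}h)(x^{-1}h)^*\;\ge\;0,
\]
since $(x^{-1}h)^* = h(x^*)^{-1}$. Thus $x^{-1}\in B(H)$ is accretive, and as $x^{-1}\in A$ we conclude $x^{-1}\in\mathfrak{r}_A$.

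For part two, I would reduce to part one by rotation. For $0\le\theta\le\pi/2$ one has the geometric description
\[
S_\theta \;=\; \bigcap_{|\psi|\le\tfrac{\pi}{2}-\theta}\,\{\,z\in\Cdb:\operatorname{Re}(e^{-i\psi}z)\ge 0\,\},
\]
so $W(x)\subset S_\theta$ if and only if $e^{-i\psi}x$ is accretive for every $|\psi|\le\tfrac{\pi}{2}-\theta$. Apply part one to each invertible accretive $e^{-i\psi}x$ to get $e^{i\psi}x^{-1}=(e^{-i\psi}x)^{-1}\in\mathfrak{r}_A$ for every such $\psi$; the same geometric characterization, read in reverse, yields $W(x^{-1})\subset S_\theta$.

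For part three, suppose $a$ is invertible and strictly accretive. By compactness of the state space and continuity, $W(a)\subset\Hdb$ forces $\operatorname{Re}\varphi(a)\ge\varepsilon$ for some $\varepsilon>0$ and all states $\varphi$, which on Hilbert space amounts to $a+a^*\ge\varepsilon I$. Using the same identity,
\[
a^{-1}+(a^{-1})^* \;=\; a^{-1}(a+a^*)(a^*)^{-1} \;\ge\; \varepsilon\, a^{-1}(a^*)^{-1} \;=\; \varepsilon\,(aa^*)^{-1} \;\ge\; \varepsilon\,\|a\|^{-2}\,I,
\]
so $a^{-1}$ is strictly accretive. The parenthetical equivalence with membership in $\mathfrak{c}_A$ for invertibles is quoted from the introduction, so no further argument is needed there.
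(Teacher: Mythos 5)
Your proof is correct and in places more self-contained than the paper's. For the first assertion the two arguments are essentially the same one-line Hilbert-space computation in different clothing: the paper writes $\eta = x\zeta$ and computes ${\rm Re}\,\langle x^{-1}\eta,\eta\rangle = {\rm Re}\,\langle x\zeta,\zeta\rangle \geq 0$, while you factor $x^{-1}+(x^{-1})^* = (x^{-1}h)(x^{-1}h)^*$ with $h=(x+x^*)^{1/2}$. For the second assertion the paper simply cites \cite{ChLi,GR}, whereas your rotation argument---writing $S_\theta$ as an intersection of rotated closed half-planes and applying part one to each invertible accretive $e^{-i\psi}x$---gives a clean self-contained proof; the symmetry of the range $|\psi|\le\frac{\pi}{2}-\theta$ is what lets you pass from ``$e^{i\psi}x^{-1}$ accretive for all such $\psi$'' back to $W(x^{-1})\subset S_\theta$, and you use this correctly. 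For the third assertion the paper proves the parenthetical equivalence (strictly accretive $\Leftrightarrow$ in ${\mathfrak c}_A$, for invertibles) via the $C^*$-identity ($\Vert 1-ta\Vert\le 1$ iff $a+a^*\ge ta^*a$) and then invokes \cite[Proposition 3.5]{ChLi} to conclude $a^{-1}\in\frac{1}{\epsilon}{\mathfrak F}_A$; your direct estimate $a^{-1}+(a^{-1})^*\ge \varepsilon\, a^{-1}(a^*)^{-1}\ge \varepsilon\Vert a\Vert^{-2}I$ is more elementary and yields an explicit quantitative bound. Two small corrections: $a^{-1}(a^*)^{-1}$ equals $(a^*a)^{-1}$, not $(aa^*)^{-1}$ (harmless, since both dominate $\Vert a\Vert^{-2}I$); and the parenthetical equivalence is not actually stated in the introduction---the paper establishes it inside this very proof---so to cover the full statement you should supply the one-line $C^*$-identity argument rather than deferring it.
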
   \begin{proof}  Throughout this proof let $x$ be invertible and accretive. For the first statement (which 
is well known but since it is short we will prove), suppose that $A \subset B(H)$ is a unital
subalgebra.   Then any $\eta \in H$ equals $x \zeta$ with $\zeta \in H$ and
$${\rm Re} \;\langle x^{-1}  \eta, \eta \rangle  = {\rm Re} \; \langle \zeta , x \zeta   \rangle=
{\rm Re} \; \langle x \zeta , \zeta  \rangle \geq 0 .$$
So $x^{-1}$ is accretive.    The second statement is in the references cited above the lemma. 

Now $a \in {\mathfrak c}_A$  iff there exists $t > 0$ with 
$\Vert 1 - t a \Vert^2 \leq 1$, which is easy to see via the $C^*$-identity happens iff $a + a^* \geq t a^* a$.  This in turn is equivalent to 
$a + a^* \geq \epsilon 1$ for some $\epsilon > 0$,   
since $a$ is invertible.     Then \cite[Proposition 3.5]{ChLi}  implies that $a^{-1} \in
\frac{1}{\epsilon} {\mathfrak F}_A \subset {\mathfrak c}_A$. 
\end{proof}

The last lemma is not true for unital Banach algebras.  For example in $\ell^1_2$ with the usual convolution 
product, $(1+i,1)$ is accretive, but its inverse $\frac{1}{-1+2i}(1+i,-1)$ is not accretive, using the 
criterion for being accretive given in Example 3.14  in \cite{BOZ}.

\begin{remark}  The last observation
 gives one way to see that the Cayley transform $\kappa(x)$ and the transform ${\mathfrak F}(x)$  considered e.g.\
in \cite[Section 2.2]{Bord}, are not contractions for accretive
$x$ in general unital 
Banach algebras.   Indeed if $\kappa(x)$ was contractive then ${\mathfrak F}(x) = \frac{1}{2}(1 + \kappa(x))$
is contractive, and hence 
$$\Vert (t + x^{-1})^{-1} \Vert = \frac{1}{t} {\mathfrak F}(tx) \leq  \frac{1}{t} , \qquad t > 0.$$
This implies that $x^{-1}  \in {\mathfrak r}_A$ by e.g.\ \cite[Lemma 2.4]{BSan}.
\end{remark}

We will say that an element $x$ in a unital Banach algebra $A$ is {\em type $M$} if
there exists a constant $M$ such that $\Vert (t1 + x)^{-1} \Vert \leq M/t$ for all
$t > 0$.    This is essentially what is called being sectorial in \cite{Haase} (see p.\ 20--21 there, replacing $a$ by 
left multiplication by $a$ in $B(A)$),  or sometimes called
being `non-negative'.   We use this older name simply because there is 
ambiguity in the literature: e.g.\ we (and many others) have used the word sectorial  for the stronger notion
of an operator whose numerical range is contained in a sector $S_\theta$ with $\theta < \pi$.  
Note that the  inequality $\Vert (t1 + x)^{-1} \Vert \leq 1/t$ for all
$t > 0$  is equivalent to $x$ being
accretive (see e.g.\ \cite[Lemma 2.4]{BSan}).     Inverses of invertible
type $M$ elements are type $M$ by an elementary equality for inverses \cite[Proposition 2.1.1]{Haase}.
It is well known that 
if the spectrum of an {\em  invertible} element
 $a$ contains no real strictly negative numbers then $a$ is type $M$.   
Indeed  for any  $a \in A$ the identity defining `type $M$ elements' is  always true for $t > 2 \Vert a \Vert$ by an 
inequality in the  proof of the Neumann series lemma:
$$\Vert (1 + a/t)^{-1} \Vert \leq 1/(1- \Vert  a/t \Vert) \leq 2;$$
and $t \Vert (t 1 + a )^{-1} \Vert$ is continuous and
hence bounded on $[0,2 \Vert a \Vert]$.    (We remark though that 
this is false if $a$ is not invertible.  
 Consider for example $a(z) = z$ on the parabola $y = x^2$ for $|x| \leq 1$,
here $\Vert t (t 1 + a )^{-1} \Vert$ dominates $1/t$, the absolute 
value of $t (t 1 + a )^{-1}$ at $z = -t+it^2$.  
Also, there are type $M$ elements $a$ with negative 
numbers in $W(a)$, e.g.\ most invertible $2 \times 2$ matrices with $-1$ in the $1$-$1$ entry.)

\begin{lemma}   \label{unsqr}  In a Banach algebra, if $a, b$ are  type $M$ then 
$\Vert a^t - b^t \Vert \leq K \Vert a - b \Vert^t$ for all $t \in (0,1]$, for a constant $K$ 
depending on $t$.
 \end{lemma}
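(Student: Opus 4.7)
The plan is to exploit the integral representation
$$a^t \; = \; \frac{\sin(\pi t)}{\pi} \int_0^\infty s^{t-1}\, a (s 1 + a)^{-1} \, ds \qquad (0 < t < 1),$$
which follows from Lemma \ref{Rieszroot} applied with $\alpha = 1-t$ (giving a formula for $a^{t-1}$) after left-multiplying by $a$. Lemma \ref{Rieszroot} is stated for invertible elements with no strictly negative spectrum, so for a general type $M$ element I first pass to $a + \epsilon 1$; this is invertible and remains type $M$ with the same constant, since $\Vert (s1 + a + \epsilon 1)^{-1}\Vert \leq M/(s+\epsilon) \leq M/s$. The case $t = 1$ of the lemma is trivial, so I assume $0 < t < 1$ throughout.

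Subtracting the analogous formula for $b$ and using the resolvent-type identity
$$a(s1+a)^{-1} - b(s1+b)^{-1} \; = \; s\bigl[(s1+b)^{-1} - (s1+a)^{-1}\bigr] \; = \; s(s1+a)^{-1}(a-b)(s1+b)^{-1}$$
converts $a^t - b^t$ into $\frac{\sin(\pi t)}{\pi}\int_0^\infty s^{t} (s1+a)^{-1}(a-b)(s1+b)^{-1}\, ds$. I will bound the integrand in two ways: the type $M$ hypothesis on both resolvents gives $M^{2} s^{t-2} \Vert a-b\Vert$, which is integrable near infinity; and the trivial bound $\Vert a(s1+a)^{-1}\Vert = \Vert 1 - s(s1+a)^{-1}\Vert \leq 1+M$ (and the same for $b$) gives $2(1+M) s^{t-1}$, which is integrable near zero. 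The standard splitting trick at $s_0 = \Vert a - b\Vert$ then yields
$$\int_0^{s_0} 2(1+M) s^{t-1}\, ds \; = \; \tfrac{2(1+M)}{t}\Vert a-b\Vert^{t}, \qquad \int_{s_0}^\infty M^2 s^{t-2} \Vert a-b\Vert\, ds \; = \; \tfrac{M^2}{1-t}\Vert a-b\Vert^{t},$$
and summing these and multiplying by $\sin(\pi t)/\pi$ produces the Hölder bound with a constant $K = K(t,M)$ depending only on $t$ and the type $M$ constants of $a$ and $b$.

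The main (rather mild) obstacle is the passage from the invertible case to the general type $M$ case, since the estimate above is initially proved for $a + \epsilon 1$ and $b + \epsilon 1$; one needs that $(a+\epsilon 1)^t \to a^t$ in norm as $\epsilon \to 0^{+}$. This is a standard continuity property of the sectorial functional calculus (see \cite{Haase}), and since $\Vert (a+\epsilon 1) - (b+\epsilon 1)\Vert = \Vert a - b\Vert$ is unaffected, the limit delivers the bound for the original $a,b$. Everything else is routine integral estimation.
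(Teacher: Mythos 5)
Your proof is correct, and it is essentially the argument the paper is deferring to: the Lemma's stated proof is just a citation of Macaev--Palant \cite{MP}, whose proof is exactly this Balakrishnan-type integral representation combined with the resolvent identity and the splitting of the integral at $s_0 = \Vert a - b \Vert$ (note, as you do, that $K$ necessarily depends on the type $M$ constants as well as on $t$, which is implicit in the paper's statement). The only cosmetic difference is your detour through the invertible elements $a + \epsilon 1$ via Lemma \ref{Rieszroot}; this is sound, but could be shortened by invoking the Balakrishnan formula $a^t = \frac{\sin(\pi t)}{\pi}\int_0^\infty s^{t-1}\, a(s1+a)^{-1}\,ds$ directly for type $M$ elements (as in \cite[Formula (3.2)]{Haase}, which the paper uses elsewhere), eliminating the need for the limiting step at the end.
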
   \begin{proof}   This follows from the proof of the analoguous result  in \cite{MP}.
 \end{proof}

 Some details seem to be skipped in the proof of uniqueness 
in \cite[Theorem 2.8]{LRS}, which with the help of \cite{Noll}
we supply below, also slightly improving the result.   See also \cite[Chapter 6]{MS}.

\begin{theorem}  \label{lrs}   If  $A$ is a unital Banach algebra, $m \in \Ndb$, and $x \in A$ is such that $W(x)$ contains
no strictly negative numbers, then $x$ has a unique $m$th root with spectrum in $S_{\frac{\pi}{m}}$.
This root is in the closed subalgebra generated by $x$.    

 Also we have 
$(e^{i \theta} \, x)^s = e^{i s \theta} \, x^s$ for $s \in [0,1]$ and $|\theta| \leq \pi$, provided 
that $W(e^{i \rho} x)$ contains 
no strictly negative numbers for all $\rho$ between $0$ and $\theta$ (including 
$0$ and $\theta$).
\end{theorem}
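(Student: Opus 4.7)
The plan is to handle existence, uniqueness, and the rotation identity in order.

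For existence: when $x$ is invertible, ${\rm Sp}(x) \subset \Cdb \setminus (-\infty,0]$, so the principal branch of $z \mapsto z^{1/m}$ is analytic on a neighborhood of ${\rm Sp}(x)$ and the Riesz--Dunford functional calculus produces $y_0 = x^{1/m}$ in the closed subalgebra $C$ generated by $x$, with ${\rm Sp}(y_0) \subset S_{\pi/m}^\circ$ by spectral mapping. When $x$ is not invertible I approximate: $x + \epsilon 1$ is invertible because $W(x + \epsilon 1) = W(x) + \epsilon$ still avoids $(-\infty,0)$, and it is type $M$ by the remarks preceding Lemma~\ref{unsqr}. That lemma, applied to $x+\epsilon 1$ and $x+\delta 1$, shows $\{(x+\epsilon 1)^{1/m}\}$ is norm-Cauchy as $\epsilon \to 0^+$; its limit $y_0$ lies in $C$, satisfies $y_0^m = x$, and has ${\rm Sp}(y_0) \subset S_{\pi/m}$ by upper semicontinuity of the spectrum.

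For uniqueness, let $y \in A$ satisfy $y^m = x$ and ${\rm Sp}(y) \subset S_{\pi/m}$. I first sharpen this to ${\rm Sp}(y) \subset S_{\pi/m}^\circ \cup \{0\}$: a nonzero $\lambda \in {\rm Sp}(y)$ with $\arg\lambda = \pm \pi/m$ would give $\lambda^m = -|\lambda|^m \in {\rm Sp}(x) \subset W(x)$ strictly negative, contradicting the hypothesis. Since $y$ commutes with $x$ and $y_0 \in C$, the elements $y$ and $y_0$ commute; let $D$ be the closed commutative unital subalgebra of $A$ generated by $\{y, y_0\}$. Because $S_{\pi/m}$ is convex with connected complement, the polynomial hull of ${\rm Sp}_A(y)$ lies in $S_{\pi/m}$, so ${\rm Sp}_D(y) \subset S_{\pi/m}$; the analogous argument for $x$ gives ${\rm Sp}_D(x) \cap (-\infty,0) = \emptyset$. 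For every character $\chi$ of $D$, both $\chi(y)$ and $\chi(y_0)$ are $m$th roots of $\chi(x) \notin (-\infty,0)$ lying in $S_{\pi/m}$, and such a root is unique (the principal one), so $\chi(y) = \chi(y_0)$. Hence $u := y - y_0$ lies in the Jacobson radical of $D$. Factoring in $D$ gives $0 = y^m - y_0^m = u \cdot Q$ with $Q = \sum_{k=0}^{m-1} y^k y_0^{m-1-k}$, on which every character evaluates to $m\chi(y_0)^{m-1}$; when $x$ is invertible this is never zero, so $Q$ is invertible in $D$ and $u = 0$. The anticipated main obstacle is the noninvertible case, where $Q$ may be annihilated by some character; I would handle this by approximating $y$ and $y_0$ by $y+\delta 1$ and $y_0+\delta 1$ (whose spectra lie strictly inside the open sector) and using Lemma~\ref{unsqr} to pass to the limit, or by adapting Noll's spectral-projection argument to show directly that the quasinilpotent element $u$ must vanish.

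For the rotation identity, I would use the Riesz integral representation. The hypothesis that $W(e^{i\rho} x)$ avoids $(-\infty,0)$ for every $\rho$ between $0$ and $\theta$ is equivalent to $W(x)$ being disjoint from the closed wedge of rays with argument between $\pi - \theta$ and $\pi$; in particular so is ${\rm Sp}(x)$. I may therefore choose a single contour $\Gamma$ enclosing ${\rm Sp}(x)$ inside the complementary open region $\{\arg z \in (-\pi, \pi - \theta)\}$ (and symmetrically when $\theta < 0$). On $\Gamma$ the principal branches satisfy $(e^{i\theta} z)^s = e^{i s \theta} z^s$, so the change of variables $w = e^{i\theta} z$ in the Riesz representation of $(e^{i\theta} x)^s$ factors out $e^{i s \theta}$ and recovers the integral for $x^s$, giving the identity.
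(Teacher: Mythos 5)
Your arguments for \emph{invertible} $x$ are correct and take a genuinely different route from the paper: Riesz--Dunford calculus for existence, Gelfand theory in the commutative closed subalgebra $D$ generated by $y$ and $y_0$ (reducing uniqueness to the scalar fact plus invertibility of $Q=\sum_k y^k y_0^{m-1-k}$), and a single contour computation for the rotation identity. That is cleaner than the paper's case-by-case treatment. The trouble is that the real content of the theorem is the non-invertible case, and there each of your three reductions has a gap. (a) \emph{Existence:} to conclude that $\{(x+\epsilon 1)^{1/m}\}$ is norm-Cauchy from Lemma~\ref{unsqr} you need the constant there to be uniform in $\epsilon,\delta$. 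The constant coming from the Macaev--Palant argument depends on the type-$M$ constants of the two elements, and for $x+\epsilon 1$ these need not stay bounded as $\epsilon\to 0^+$ unless $x$ itself is type $M$ --- which the hypothesis does not give: the paper's own example (multiplication by $z$ on the parabola $y=x^2$, $|x|\le 1$) has numerical range free of strictly negative numbers yet is not type $M$. The paper sidesteps this with a dichotomy: either $W(x)\subset S_\beta$ for some $\beta<\pi$, in which case Stampfli--Williams \cite{SW} gives type $M$ and one quotes \cite{Noll}, or else convexity of $W(x)$ forces $W(x)\subset \pm i\overline{\Hdb}$ and one takes $x^{1/m}=i^{1/m}(-ix)^{1/m}$ with $-ix$ accretive.

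(b) \emph{Uniqueness:} your Gelfand argument only closes when $x$ is invertible (so that $Q$ escapes the kernel of every character); the ``fallback'' you sketch for the noninvertible case --- perturb to $y+\delta 1$, $y_0+\delta 1$ and apply Lemma~\ref{unsqr} to $(y+\delta 1)^m$ and $(y_0+\delta 1)^m$ --- is not a fallback but \emph{is} the proof, and it is exactly the paper's argument; it needs to be written out (in particular one must check that $y+\delta 1$ is the principal $m$th root of $(y+\delta 1)^m$ and that $(y+\delta 1)^m-(y_0+\delta 1)^m=O(\delta)$). (c) \emph{Rotation identity:} the Riesz contour representation of $x^s$ simply does not exist when $0\in{\rm Sp}(x)$, since $z\mapsto z^s$ is not analytic at $0$ and no admissible contour encloses the spectrum; your argument proves the identity only for invertible $x$. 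One must either use a regularized sectorial-calculus contour or approximate and pass to the limit (again needing uniform control), whereas the paper instead factors $\theta$ into small increments and leverages the accretive case from \cite[Corollary 4.6]{BBS}. So the proposal is a valid proof of the invertible case by different means, but the noninvertible case --- the heart of the statement --- is not established as written.
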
  
  \begin{proof}   If $W(x) \subset S_\beta$ for some $\beta < \pi$, then $x$ is type $M$ as stated 
above,  and then 
the first part of the result (except for the 
the `subalgebra generated' assertion)  is in \cite{Noll} 
(the main part being in \cite{LRS} too).
We will
take this for granted in the following argument.   In the contrary case, since $W(x)$ is convex, it follows that $W(x) \subset
i \bar{\Hdb}$ or $W(x) \subset
-i \bar{\Hdb}$.   We assume the first, the second being similar.    
Then 
$i^{\frac{1}{m}} (- i x)^{\frac{1}{m}}$
is an $m$th root of $x$
with spectrum in $i^{\frac{1}{m}} S_{\frac{\pi}{2m}} \subset S_{\frac{\pi}{m}}$.   
That $x^{\frac{1}{m}}$ is in the closed subalgebra generated by $x$ may be found e.g.\ in the discussion after
Proposition 6.3 in \cite{BSan}.

Now suppose that $c_1, c_2$ are two $m$th roots of $x$ with spectrum in $S_{\frac{\pi}{m}}$.
Then for $\epsilon > 0$ let $d_k = c_k + \epsilon 1$, then $d_k^m$ is invertible and has spectrum 
containing 
no strictly negative numbers by the spectral mapping theorem.   Thus $d_k^m$ is type $M$ by an observation
above Lemma   \ref{unsqr}, and so 
we can use the argument in \cite{LRS,Noll}: by an argument in
\cite{MP} (see Lemma \ref{unsqr} above) we have 
$$\Vert c_1 - c_2 \Vert \leq K \Vert d_1^{m} - d_2^{m} \Vert
\to 0$$
as $\epsilon \to 0$, so $c_1 = c_2$.  

For the last assertion, let $\theta$ be as described.   By writing $\theta = p \, \frac{\theta}{p}$ for a large integer
$p$ and iterating the identity we are proving $p$ times, we may assume that $\theta$ is as close to $0$ as we like.  
 In fact, the case that $- \frac{\pi}{2} \leq 
\theta < 0$ and $e^{i \theta} x$ is accretive is done in
\cite[Corollary 4.6]{BBS} (note that the first centered equation on page 564 there also 
follows from the uniqueness argument just after the next centered equation there).     Next suppose that 
the largest  argument of numbers in $W(x)$ is $\alpha > \frac{\pi}{2}$, and suppose that $\frac{\pi}{2} - \alpha < 
\theta < 0$, so that $W(e^{i \theta} x)$  still intersects the interior of the third quadrant.   Choose $\rho > 0$ 
such that $W(e^{i (\theta - \rho)} x)$ is accretive, then by the  case just discussed we have
$(e^{i (\theta - \rho)} x)^{s} = e^{i s (\theta - \rho)} x^s$, so that 
$$e^{i s \theta} x^s =  e^{i s  \rho} (e^{i (\theta - \rho)} x)^{s}  = 
(e^{i   \rho} e^{i (\theta - \rho)} x)^{s} = (e^{i \theta} x)^s,$$
where in the second last equality we used the case from \cite{BBS} again.
The next case we consider is if $x$ is accretive, and $\theta < 0$.   Let  $a = e^{i \theta} x$, then $e^{-i \theta} a = x$.
By the case from \cite{BBS} we have $(e^{-i \theta} a)^s = e^{-i s \theta} a^s$, so that
$e^{ i s \theta} x^s = (e^{i \theta} x)^s$ as desired.    Next, if $W(x)$ contains numbers in the 
interior of the third quadrant and $\theta$ negative but very small, choose $\rho > 0$ with $e^{i (\theta + \rho)} x$ accretive.   By the case from \cite{BBS}, we have
$(e^{i (\theta + \rho)} x)^{s} = e^{i s (\theta + \rho)} x^s$, so that 
$$e^{i s \theta} x^s =  e^{-i s  \rho} (e^{i (\theta + \rho)} x)^{s}  = 
(e^{-i   \rho} e^{i (\theta + \rho)} x)^{s} = (e^{i \theta} x)^s,$$
similarly to a case above.
Finally, if $\theta > 0$, replace $x$ by $a = e^{i \theta} x$ and $\theta$ with its negative, and apply the above.
 \end{proof}

The last assertion of the last result is also no doubt known to some  experts, see e.g.\ \cite[Theorem 10.1]{Kom}.
The following is no doubt also known, but again we do not know of a reference for it as stated.

\begin{corollary} \label{numraoffc}   
If $a$ is a Hilbert space operator with no strictly negative numbers in $W(a)$, and with the arguments of numbers in $W(a)$ inside
$[\alpha, \beta]$ for $-\pi \leq \alpha \leq \beta \leq \pi$, 
then  for $s \in (0,1)$ the arguments of numbers in  $W(a^s)$ are in 
$[s \alpha, s \beta]$.   \end{corollary}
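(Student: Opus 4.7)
The plan is to reduce to the symmetric-sector case, where the bound is already recorded in the introduction, and then rotate back using the functional calculus identity established in Theorem \ref{lrs}. Set $\gamma = (\alpha + \beta)/2$ and $\theta = (\beta - \alpha)/2$. Since $W(a)$ is compact and contains no strictly negative numbers, we may assume $\alpha, \beta \in (-\pi, \pi)$ (by shrinking $[\alpha, \beta]$ to the tight bounds on arguments of nonzero elements of $W(a)$ if necessary), so that $\theta < \pi$. Put $b = e^{-i\gamma} a$, so that $W(b) = e^{-i\gamma} W(a)$ has arguments in $[-\theta, \theta]$; in other words $W(b) \subset S_\theta$ with $\theta < \pi$.

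Next I would invoke the sector-preservation result recalled in the introduction (a specialization of \cite[Corollary 4.6]{BBS}): if $W(b) \subset S_\theta$ with $\theta < \pi$, then $W(b^s) \subset S_{s\theta}$ for $s \in [0,1]$. Combined with the rotation identity from Theorem \ref{lrs}, which gives $a^s = e^{is\gamma} b^s$, I would conclude
$$W(a^s) \;=\; e^{is\gamma}\, W(b^s) \;\subset\; e^{is\gamma}\, S_{s\theta},$$
so the arguments of the nonzero elements of $W(a^s)$ lie in $[s\gamma - s\theta,\, s\gamma + s\theta] = [s\alpha, s\beta]$, which is exactly the claimed bound.

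The one detail that really needs checking, and which I expect to be the only mildly delicate point, is that the hypothesis for Theorem \ref{lrs} is satisfied so as to supply the rotation identity $(e^{-i\gamma} a)^s = e^{-is\gamma} a^s$. That is, we need $W(e^{i\rho} a)$ to contain no strictly negative numbers for every $\rho$ between $0$ and $-\gamma$. Since the nonzero elements of $W(e^{i\rho} a) = e^{i\rho} W(a)$ have arguments in $[\alpha + \rho, \beta + \rho]$, and as $\rho$ runs from $0$ to $-\gamma$ this interval interpolates between $[\alpha, \beta]$ and $[-\theta, \theta]$, both contained in $(-\pi, \pi)$, a short case check (one case each for $\gamma > 0$ and $\gamma < 0$) confirms that $\pm\pi$ is never met. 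This bookkeeping around the branch cut is the only point that requires any care; everything else is immediate from the ingredients already assembled in the paper.
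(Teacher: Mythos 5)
Your proof is correct and follows essentially the same route as the paper's: split $[\alpha,\beta]$ into its midpoint and half-width, rotate $a$ into a symmetric sector $S_\theta$, apply the sector-preservation fact $W(b^s)\subset S_{s\theta}$, and rotate back via the identity $(e^{-i\gamma}a)^s=e^{-is\gamma}a^s$ from Theorem \ref{lrs}. The only difference is that you explicitly verify the hypothesis of Theorem \ref{lrs} along the rotation path, a detail the paper leaves implicit.
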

\begin{proof}    Let $\nu = \frac{\beta-\alpha}{2}, \theta = \frac{\beta+\alpha}{2}$, then $W(e^{-i \theta} a)
\subset S_\nu$.   Hence using the last assertion of the last result,
$W(e^{-i s \theta} a^s) = W((e^{-i \theta} a)^s) \subset S_{s\nu}$,
so that the arguments of numbers in  $W(a^s)$ are in 
$[-s \nu +s \theta, s \nu +s \theta] =   [s \alpha, s \beta]$.
 \end{proof}

In \cite[Section 6]{BSan}  we gave an estimate for the `sectorial angle'
of $W(x^t)$ for accretive elements
in a Banach algebra.   The following is the variant of that result in the case that $W(x) \subset S_\theta$ 
for $\frac{\pi}{2} < \theta < \pi$.

\begin{lemma}   \label{touch}   If $A$ is a unital Banach
algebra 
 and
if $x \in A$ has no negative numbers in its numerical range and satisfies  
$W(x) \subset S_{\frac{\pi}{2} + \theta}$, where $0 \leq \theta \leq \frac{\pi}{2}$, 
 then $W(x^{\frac{1}{p}}) \subset S_{\frac{\pi}{2} + \frac{\theta}{p}}$ for $p \in \Ndb$. 
If $A$ is also an operator 
algebra on a Hilbert space then  $W(x^{\frac{1}{p}}) \subset S_{\frac{\pi}{2p} + \frac{\theta}{p}}$.
 \end{lemma}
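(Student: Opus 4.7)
The two assertions have different strengths and different proofs, so I would treat them separately, starting with the easier operator algebra case.

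For the operator algebra assertion, apply Corollary~\ref{numraoffc} with $s = 1/p$. Since $W(x) \subset S_{\pi/2+\theta}$ and $W(x)$ contains no strictly negative reals, the arguments of numbers in $W(x)$ lie in the closed interval $[-(\pi/2+\theta), \pi/2+\theta]$; the corollary then puts the arguments of numbers in $W(x^{1/p})$ in $[-(\pi/2+\theta)/p, (\pi/2+\theta)/p]$, which gives $W(x^{1/p}) \subset S_{\pi/(2p)+\theta/p}$.

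For the Banach algebra assertion, the plan relies on a convex-geometric reduction. The sector $S_{\pi/2+\theta/p}$ is the union of the two rotated right-half-planes $H_{\pm \theta/p} = \{w : \operatorname{Re}(e^{\mp i\theta/p} w) \geq 0\}$, and its complement in $\Cdb$ is a convex open cone around the negative real axis of opening $\pi - 2\theta/p$. A Hahn-Banach separation argument shows that a convex set avoiding this cone must lie entirely in one such half-plane $H_\alpha$ for some $|\alpha| \leq \theta/p$, so $W(x^{1/p}) \subset S_{\pi/2+\theta/p}$ is equivalent to the accretivity of $e^{-i\alpha} x^{1/p}$ for some such $\alpha$. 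After reducing to $\theta < \pi/2$ by the perturbation $x \to x + \varepsilon\, 1$ and continuity (Lemma~\ref{unsqr}), I would apply the Balakrishnan integral representation
$$
x^{1/p} \;=\; \frac{\sin(\pi/p)}{\pi} \int_0^\infty t^{1/p-1}\, x(t+x)^{-1}\, dt,
$$
which is available since $x$ is of type $M$ by the remark preceding Lemma~\ref{unsqr}. Given a state $\varphi$, the representation expresses $\varphi(x^{1/p})$ as a positive-measure average of the scalars $\varphi(x(t+x)^{-1})$, whose arguments would be controlled via the scalar M\"obius geometry of $z \mapsto z(t+z)^{-1}$ (which for $z \in S_{\pi/2+\theta}$ keeps $\arg(z(t+z)^{-1})$ between $0$ and $\arg z$) combined with the type-$M$ bound $\|(t+x)^{-1}\| \leq M/t$.

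The main obstacle I anticipate is converting these pointwise scalar bounds on $\arg \varphi(x(t+x)^{-1})$ into a uniform bound of the form $\pi/2 + \theta/p$ on $\arg \varphi(x^{1/p})$, since in a general Banach algebra one lacks the sharper $W(f(x))$-control that Corollary~\ref{numraoffc} provides in the Hilbert space setting. The mechanism I expect to work is that the weighted Balakrishnan kernel $t^{1/p-1}$ concentrates the integrand's contributions in a way that contracts the ``excess'' of the argument range of $W(x)$ over the right half-plane (namely $\theta$) by a factor of $1/p$, yielding exactly the claimed bound $\pi/2 + \theta/p$.
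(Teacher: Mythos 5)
Your operator-algebra assertion is handled correctly: applying Corollary \ref{numraoffc} with $s=1/p$ and $[\alpha,\beta]=[-(\tfrac{\pi}{2}+\theta),\tfrac{\pi}{2}+\theta]$ gives exactly $W(x^{1/p})\subset S_{\frac{\pi}{2p}+\frac{\theta}{p}}$, which is in substance what the paper does (it simply cites the Hilbert-space case as known). The Banach-algebra assertion, however, is not proved, and the gap you yourself flag is a real one, not a technicality. For a state $\varphi$, the quantity $\varphi\bigl(x(t+x)^{-1}\bigr)$ is \emph{not} the scalar $z(t+z)^{-1}$ evaluated at some $z\in W(x)$ --- states do not intertwine with the M\"obius map --- so the ``scalar M\"obius geometry'' gives no control whatsoever on $\arg\varphi\bigl(x(t+x)^{-1}\bigr)$, and in a general Banach algebra there is no spectral-set or Crouzeix-type functional calculus to substitute for it. Worse, the mechanism you hope for cannot be the right one: in a Banach algebra the root of an accretive element is merely accretive (the sharper $S_{\pi/(2p)}$ bound is a Hilbert-space phenomenon, which is precisely why the lemma has two different conclusions), so no averaging argument can contract the $\tfrac{\pi}{2}$ part of the angle by $1/p$; only the ``rotation'' part $\theta$ contracts.

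The missing step is short, and you already possess the key geometric fact --- you just apply it to the wrong set. Apply your Hahn--Banach/half-plane observation to the \emph{hypothesis} rather than the conclusion: $W(x)$ is convex and compact, avoids the strictly negative reals, and lies in $S_{\frac{\pi}{2}+\theta}$, hence lies in a single rotated half-plane $e^{i\alpha}\overline{\Hdb}$ for some $|\alpha|\le\theta$; that is, $y=e^{-i\alpha}x$ is accretive. Accretive elements of a unital Banach algebra are closed under $p$th roots (as recalled in the Introduction, see \cite[Section 6]{BSan}), so $y^{\frac{1}{p}}$ is accretive with spectrum in $S_{\frac{\pi}{2p}}$. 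Then $e^{i\alpha/p}\,y^{\frac{1}{p}}$ is a $p$th root of $x$ with spectrum in $S_{\frac{\pi}{2p}+\frac{\alpha}{p}}\subset S_{\frac{\pi}{p}}$, so by the uniqueness in Theorem \ref{lrs} it equals $x^{\frac{1}{p}}$. Hence $W(x^{\frac{1}{p}})=e^{i\alpha/p}\,W(y^{\frac{1}{p}})\subset e^{i\alpha/p}\,\overline{\Hdb}\subset S_{\frac{\pi}{2}+\frac{\theta}{p}}$. This is the paper's (essentially one-line) argument; no integral representation, type-$M$ estimate, or $\epsilon$-perturbation is needed for this half of the lemma.
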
   \begin{proof}  We have that $e^{-i \theta} x$ is accretive,
so that $e^{-i \frac{\theta}{p}} x^{\frac{1}{p}}$ is accretive (see also the proof of
Theorem \ref{lrs}).  Hence $W(x^{\frac{1}{p}}) \subset S_{\frac{\pi}{2} + \frac{\theta}{p}}$.
The Hilbert space case is well known (see e.g.\
\cite[Theorem 2.8]{LRS} and the last section in \cite{BBS}). \end{proof}

\begin{proposition}   \label{Rsa}
 Let $t > 1$. In  a unital Banach algebra $A$ if $\Vert 1 - t x \Vert \leq 1$ and $\Vert 1-x \Vert = 1$ then every functional that achieves  its norm at 
$1-x$ is a scalar multiple of a state.  Hence if $x$ is an  element  of $A$ with
 $0 \notin W(x)$ and with $\Vert 1 - t x \Vert \leq 1$ for some $t > 1$ then
$\Vert 1 - x \Vert < 1$.
\end{proposition}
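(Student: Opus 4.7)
The main work is in the first assertion; I plan to reduce it to a one-line geometric observation about two tangent disks, after which the second assertion follows from a short Hahn--Banach argument. Suppose $\varphi \in A^*$ attains its norm at $1-x$. After multiplying by a suitable unimodular scalar, normalise so that $\Vert \varphi \Vert = 1$ and $\varphi(1-x) = \Vert 1-x \Vert = 1$. Write $\alpha := \varphi(1)$; the goal is to show $\alpha = 1$, for then $\varphi(1) = 1 = \Vert \varphi \Vert$ identifies $\varphi$ as a state.

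Two constraints on $\alpha$ are immediate. First, $|\alpha| = |\varphi(1)| \leq \Vert \varphi \Vert = 1$ places $\alpha$ in the closed unit disk. Second, since $\varphi(x) = \varphi(1) - \varphi(1-x) = \alpha - 1$, the hypothesis $\Vert 1 - tx \Vert \leq 1$ yields
$$1 \geq |\varphi(1 - tx)| = |\alpha - t(\alpha-1)| = |t - (t-1)\alpha|,$$
placing $\alpha$ in the closed disk of radius $\tfrac{1}{t-1}$ centred at $\tfrac{t}{t-1}$. The key (and really the only content of the argument) is the geometric observation that these two disks meet in exactly one point: both centres are real, and the distance between them, $\tfrac{t}{t-1}$, equals the sum of the radii $1 + \tfrac{1}{t-1}$, so the disks are externally tangent at the real point $\alpha = 1$. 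Hence $\alpha = 1$.

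For the second assertion, the convex combination $1-x = \tfrac{1}{t}(1-tx) + \tfrac{t-1}{t}\cdot 1$ already forces $\Vert 1-x \Vert \leq 1$. Assuming $x \neq 1$ (the other case being trivial) and supposing for contradiction that $\Vert 1-x \Vert = 1$, a Hahn--Banach extension of the functional $\lambda(1-x) \mapsto \lambda$ on $\spn(1-x)$ produces a norm-one $\varphi \in A^*$ with $\varphi(1-x) = 1$; by the first part $\varphi$ is a state, so $\varphi(x) = \alpha - 1 = 0 \in W(x)$, contradicting the hypothesis $0 \notin W(x)$. The only step demanding care is the tangency computation for the two disks, but once one draws the picture on the real line it is immediate.
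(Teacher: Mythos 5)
Your proof is correct and follows essentially the same route as the paper: both arguments extract the two constraints $|\varphi(1)|\leq 1$ and $|t-(t-1)\varphi(1)|\leq 1$ and observe that they pin $\varphi(1)$ down to $1$ (the paper via the equality case of the triangle inequality, you via the equivalent observation that the two disks are externally tangent at $1$), and the second assertion is handled by the same convexity plus Hahn--Banach argument.
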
   \begin{proof}  Any norm 1 functional $f$ with $f(1-x) = 1$, satisfies 
$$1 \geq |f(1-tx) |=  |f(1-t + t(1-x))| = |t - (t-1)f(1)| \geq t - (t-1)|f(1)| \geq t- (t-1) = 1.$$
So these are all equalities.   
It is clear that $f(1) \neq 0$.  By the converse to the triangle inequality, the second last and the last
(in)equality implies that $f(1) \geq 0$ and then that $f(1) = 1$.   So $f$ is a state.   

For the second assertion, by convexity $\Vert 1 - x \Vert \leq 1$.
 if $\Vert 1 - x \Vert =  1$ then 
by the first assertion there is a state that achieves its norm at 
$1-x$, so $f(x) = 0$ contradicting 
$0 \notin W(x)$.    
 \end{proof}

\section{The `sign' of a Banach algebra element}

In this section we point out that much of the theory of the
`sign of a matrix' summarized in \cite[Chapter 5]{High}  (this is sometimes called the `sector') generalizes
to Banach algebras or operator algebras.  We will follow the 
development in \cite[Chapter 5]{High} slavishly--our intent is
simply to repeat the results that generalize, and in each case
say a word about how the 
proof needs to be adapted if necessary.
  
By the spectral mapping theorem, if $x$ is an element of a unital Banach algebra with 
Sp$(x) \cap i \Rdb = \emptyset$, then
Sp$(x^2)$ contains no real negative numbers nor $0$.  So as we said
in the Introduction,
$x^2$ has a unique principal square root, whose inverse
we write as $(x^2)^{-\frac{1}{2}}$.  
We define $${\rm sign}(x) = x (x^2)^{-\frac{1}{2}} \; \; \; \;  
 \; \; \; {\rm if} \; {\rm Sp}(x) \cap i \Rdb = \emptyset.$$

As in the matrix theory, ${\rm sign}(x)$ has an integral formula
$${\rm sign}(x) = \frac{2x}{\pi} \int_0^\infty \, (t^2 1 + x^2)^{-1} \, dt.$$
This follows immediately from Lemma   \ref{Rieszroot}.

\begin{proposition} \label{sign}  Suppose that 
$a$ is an element of a unital Banach algebra $A$ with ${\rm Sp}(a) \cap i \Rdb = \emptyset$, and let $S = {\rm sign}(a)$.  \begin{itemize} \item [(1)]  $S^2 = 1$.
\item [(2)]  $S \in \{ a \}''$.
\item [(3)]  If $a$ is also a selfadjoint Hilbert space operator then 
$S$ is a symmetry (that is, a selfadjoint unitary).  More generally,
${\rm sign}(a^*) = {\rm sign}(a)^*$.
\item [(4)] $E_+ = \frac{1}{2} (I +S)$ and $E_{-} = \frac{1}{2} (I - S)$ are idempotents with sum $1$, and with
$S E_+ = E_+, S E_{-} = - E_{-},$ and $S = E_+ - E_-$.   Indeed $E_+$ is the spectral idempotent \cite{Dal}
of $a$ associated with ${\rm Sp}(a) \cap {\mathbb H}$.  
\item [(5)]  ${\rm Sp}(a) \subset {\mathbb H}$  iff $1 = {\rm sign}(a)$.   
\item [(6)]  ${\rm sign}(v^{-1} a v) = v^{-1} {\rm sign}(a) v$ if
$v$ is an invertible element of the algebra.
 \item [(7)]  $a = {\rm sign}(a) N$ where $N = (a^2)^{\frac{1}{2}}$.
\item [(8)]  ${\rm sign}(ca) = {\rm sign}(c) \, {\rm sign}(a)$  
if $c$ is a nonzero real scalar.  
\item [(9)] ${\rm sign}(a^{-1}) = {\rm sign}(a)$.
 \end{itemize}
\end{proposition}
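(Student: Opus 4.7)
The plan is to exploit the one observation that $(a^2)^{\frac{1}{2}}$ comes from the Riesz/holomorphic functional calculus (cf.\ Theorem \ref{lrs}), hence lies in $\{a^2\}''\subseteq\{a\}''$ and commutes with $a$ and with $(a^2)^{-\frac{1}{2}}$. This gives (2) immediately and turns each of the other assertions into a short commutative computation, supplemented in a few places by the uniqueness part of Theorem \ref{lrs}.

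Items (1) and (7) are direct: $S^2=a(a^2)^{-\frac{1}{2}}a(a^2)^{-\frac{1}{2}}=a^2(a^2)^{-1}=1$ and $SN=a(a^2)^{-\frac{1}{2}}(a^2)^{\frac{1}{2}}=a$. For (9) I would invoke the identity $(b^{-1})^{\frac{1}{2}}=(b^{\frac{1}{2}})^{-1}$ with $b=a^2$ (noted in the Introduction) to get $((a^{-1})^2)^{-\frac{1}{2}}=(a^2)^{\frac{1}{2}}$; then $a^{-1}(a^2)^{\frac{1}{2}}$ is the two-sided inverse of $S$, so equals $S^{-1}=S$ by (1). For (3), if $a=a^*$ then $a^2$ is positive, so $(a^2)^{-\frac{1}{2}}$ is positive selfadjoint and commutativity gives $S^*=S$; with $S^2=1$ this is a symmetry. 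The general identity ${\rm sign}(a^*)={\rm sign}(a)^*$ follows from uniqueness applied to $(a^*)^2=(a^2)^*$, which forces $((a^*)^2)^{\frac{1}{2}}=((a^2)^{\frac{1}{2}})^*$. For (6) the same uniqueness argument, applied to $(v^{-1}av)^2=v^{-1}a^2v$ and the spectrally admissible candidate $v^{-1}(a^2)^{\frac{1}{2}}v$, gives $((v^{-1}av)^2)^{\frac{1}{2}}=v^{-1}(a^2)^{\frac{1}{2}}v$, after which the identity drops out. For (8), I would split on the sign of $c$: for $c>0$ uniqueness gives $((ca)^2)^{\frac{1}{2}}=c(a^2)^{\frac{1}{2}}$ and so ${\rm sign}(ca)=S$; for $c<0$ the same argument gives ${\rm sign}(-a)=-S$, and the two cases combine to ${\rm sign}(c)\,{\rm sign}(a)$.

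The only assertion with genuine content is (4), from which (5) is immediate. The relations $E_\pm^2=E_\pm$, $E_++E_-=1$, $SE_\pm=\pm E_\pm$ and $S=E_+-E_-$ all fall out of $S^2=1$. To identify $E_+$ with the Riesz spectral idempotent of $a$ attached to $\sigma_+={\rm Sp}(a)\cap\Hdb$, I would pick an open neighbourhood $U$ of ${\rm Sp}(a)$ contained in $\Cdb\setminus i\Rdb$ and consider
\[
f(z)=\frac{1}{2}\bigl(1+z(z^2)^{-\frac{1}{2}}\bigr),
\]
with the principal branch of the square root on $\Cdb\setminus(-\infty,0]$. Then $f$ is holomorphic on $U$, it equals $1$ where ${\rm Re}\,z>0$ (because there $(z^2)^{\frac{1}{2}}=z$) and equals $0$ where ${\rm Re}\,z<0$ (because there $(z^2)^{\frac{1}{2}}=-z$). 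Since the holomorphic functional calculus is a unital algebra homomorphism, $f(a)=E_+$ is exactly the Riesz idempotent for $\sigma_+$. Then (5) reads: ${\rm Sp}(a)\subset\Hdb$ iff $E_-=0$ iff $S=1$.

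The main obstacle is purely the branch-bookkeeping in (4): fixing $(z^2)^{\frac{1}{2}}$ consistently on $\Cdb\setminus i\Rdb$ and confirming that the resulting $f$ is holomorphic on a neighbourhood of ${\rm Sp}(a)$ and takes the required boolean values on each open half-plane. Once that is in place the identification with the Riesz idempotent is standard, and every other item reduces to commutative algebra together with the uniqueness of principal roots from Theorem \ref{lrs}.
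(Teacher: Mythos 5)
Your proposal is correct, and for items (1)--(3) and (6)--(9) it is essentially the paper's argument: short commutative computations plus the uniqueness of principal roots (the paper in fact leaves (8) and part of (3) as exercises, which you have filled in correctly; your treatment of (9) via the two-sided inverse is the same ``simple algebra'' the paper invokes). The one place where you genuinely diverge is the spectral-idempotent claim in (4), and there your route is arguably the cleaner one. The paper identifies $E_+$ with the Riesz idempotent by passing to the commutative Banach algebra generated by $1$ and $a$ and evaluating characters: $\chi(S)=\chi(a)(\chi(a)^2)^{-1/2}=\pm 1$ according to which half-plane contains $\chi(a)$. As stated this only shows that $E_+$ and the Riesz idempotent have the same Gelfand transform, and one needs a small extra step (e.g.\ that the difference of two commuting idempotents with the same Gelfand transform is a quasinilpotent $x$ with $x^3=x$, hence $0$) to conclude equality, since the algebra need not be semisimple. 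Your argument --- applying the holomorphic functional calculus to $f(z)=\tfrac12\bigl(1+z(z^2)^{-1/2}\bigr)$, which is holomorphic on $\Cdb\setminus i\Rdb$ and equals the indicator of the open right half-plane there --- gives the identification in one step, with no semisimplicity issue; the only thing to note is that $f(a)$ agrees with $\tfrac12(1+S)$ because the paper's principal square root of the invertible element $a^2$ coincides with the Riesz functional calculus applied to the principal branch (as used, e.g., in the proof of Lemma~\ref{Rieszroot}). Your derivation of (5) from (4) (via nonvanishing of the Riesz idempotent attached to a nonempty spectral part) also differs from the paper's direct observation that ${\rm Sp}(a)\subset\Hdb$ iff $(a^2)^{1/2}=a$, but both are sound.
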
 \begin{proof}  (1) and (7) are obvious, and (2) 
is clear since the square root is in $\{ a \}''$.  For  (3) use the fact that 
$*$ `commutes' with the inverse, and with the square root (we leave the latter as
a simple exercise using the uniqueness of the primary square root).  
The first assertions in (4) follow
from (1).  The `spectral idempotent' assertion is because working with respect to the Banach algebra
generated by $1$ and $a$, if $\chi$ is a character of $A$ with $\chi(a) \in {\mathbb H}$ then 
$\chi(a) \cdot (\chi(a)^2)^{-\frac{1}{2}} = 1$.  And if $\chi$ is a character with $\chi(a) \in -{\mathbb H}$
then $\chi(a) \cdot (\chi(a)^2)^{-\frac{1}{2}} = -1$.

Since  ${\rm Sp}(a) \subset {\mathbb H}$ iff $(a^2)^{\frac{1}{2}} = a$,
item (5) is clear.    
For (6), $${\rm sign}(v^{-1} a v) = (v^{-1} a v) (v^{-1} a^2 v)^{-\frac{1}{2}}
= v^{-1} {\rm sign}(a) v.$$
We are silently using the uniqueness property of the principal square root
here.  We leave (8) as an exercise, and
(9) is simple algebra using the relations $a \cdot a = 
(a^2)^{\frac{1}{2}} \cdot (a^2)^{\frac{1}{2}}$ and
$((a^2)^{\frac{1}{2}})^{-1} = ((a^2)^{-1})^{\frac{1}{2}}
= ((a^{-1})^{2})^{\frac{1}{2}}$.   One may also deduce (9) from Theorem
\ref{Newtsign} below. 
  \end{proof}  

\begin{proposition} \label{signmat}  For operators $a, b$ on a Hilbert space
such that ${\rm Sp}(ba)$ contains no negative real numbers nor zero,  
we have $${\rm sign} \left[ \begin{array}{ccl} 0 & a \\
b & 0 \end{array} \right] = \left[ \begin{array}{ccl} 0 &  c \\ c^{-1} & 0 \end{array} \right] $$
where $c = a (ba)^{-\frac{1}{2}}$.   \end{proposition}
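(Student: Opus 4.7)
The plan is to directly compute $(X^2)^{-1/2}$ for the block matrix $X = \begin{bmatrix} 0 & a \\ b & 0 \end{bmatrix}$, which has the convenient block-diagonal square $X^2 = \begin{bmatrix} ab & 0 \\ 0 & ba \end{bmatrix}$. Noting that the hypothesis (together with ${\rm sign}(X)$ being defined, forcing $X$ and hence both $a,b,ab,ba$ to be invertible) and the classical identity ${\rm Sp}(ab) \setminus \{0\} = {\rm Sp}(ba) \setminus \{0\}$ ensure that ${\rm Sp}(ab)$ and ${\rm Sp}(ba)$ both avoid the non-positive reals, so the principal square roots $(ab)^{1/2}$ and $(ba)^{1/2}$ exist and lie in the open right half-plane.

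Next I would exhibit a concrete candidate for $(X^2)^{1/2}$, namely $D = \begin{bmatrix} (ab)^{1/2} & 0 \\ 0 & (ba)^{1/2} \end{bmatrix}$, observe $D^2 = X^2$, and note that ${\rm Sp}(D) = {\rm Sp}((ab)^{1/2}) \cup {\rm Sp}((ba)^{1/2}) \subset \mathbb{H}$. Uniqueness of the principal square root (cited just before Theorem \ref{lrs}) then forces $(X^2)^{1/2} = D$ and hence $(X^2)^{-1/2} = \begin{bmatrix} (ab)^{-1/2} & 0 \\ 0 & (ba)^{-1/2} \end{bmatrix}$. Multiplying on the left by $X$ gives
$${\rm sign}(X) = X(X^2)^{-1/2} = \begin{bmatrix} 0 & a(ba)^{-1/2} \\ b(ab)^{-1/2} & 0 \end{bmatrix} = \begin{bmatrix} 0 & c \\ b(ab)^{-1/2} & 0 \end{bmatrix}.$$

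It remains to identify the $(2,1)$-entry with $c^{-1} = (ba)^{1/2} a^{-1}$. The key algebraic fact I would establish is the similarity relation $(ab)^{1/2} = a(ba)^{1/2} a^{-1}$: the right-hand side squares to $a(ba)a^{-1} = ab$ and has spectrum equal to ${\rm Sp}((ba)^{1/2}) \subset \mathbb{H}$, so uniqueness of the principal square root does the job. Rearranging gives $(ba)^{1/2} a^{-1} = a^{-1}(ab)^{1/2} = a^{-1}(ab)(ab)^{-1/2} = b(ab)^{-1/2}$, i.e.\ exactly $c^{-1} = b(ab)^{-1/2}$, which finishes the proof.

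The only mildly subtle point is the conjugation identity $(ab)^{1/2} = a(ba)^{1/2}a^{-1}$; everything else is block-matrix bookkeeping. This step, together with the observation that the block-diagonal candidate for $(X^2)^{1/2}$ has spectrum in the open right half plane, is where the uniqueness half of the principal square root (Theorem \ref{lrs}) carries the whole argument.
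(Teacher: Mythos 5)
Your proof is correct and is essentially the paper's own argument: the paper simply notes ${\rm Sp}(ab)\setminus\{0\}={\rm Sp}(ba)\setminus\{0\}$ and then defers to the proof of Higham's Theorem 5.2, which is precisely the computation you carry out (block-diagonalize $X^2$, identify $(X^2)^{-1/2}$ via uniqueness of the principal square root, and use the similarity $(ab)^{1/2}=a(ba)^{1/2}a^{-1}$). You in fact supply the operator-theoretic details the paper leaves implicit, including the observation that ${\rm sign}(X)$ being defined forces $a$, $b$, $ab$, $ba$ to be invertible.
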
 \begin{proof} 
Since it is well known that 
${\rm Sp}(ab) \setminus \{ 0 \} = {\rm Sp}(ba) \setminus \{ 0 \}$, 
we also have that ${\rm Sp}(ab)$ contains no negative real numbers nor zero.  
Using graduate level  operator theory it is clear that the rest 
of the proof of   \cite[Theorem 5.2]{High} works in infinite
dimensions.  \end{proof}

\begin{remark}   1)\  It is
clear that Proposition \ref{signmat} works for Banach algebras 
too for any appropriate norm on $M_2(A)$.

\medskip

2) \ It is no doubt true as in the matrix case 
 that ${\rm sign}(a) = \frac{2}{\pi} \lim_{t \to \infty} \, \arctan (ta)$ 
for any  element $a$ 
of a unital Banach algebra $A$ with ${\rm Sp}(a) \cap i \Rdb = \emptyset$.   Indeed this
boils down to showing that $\int_0^t \, (s^2 1 + a^2)^{-1} \, ds = \arctan (ta)$ for 
positive scalars $t$, and the latter is possibly well known.     
 \end{remark} 

It follows that for an invertible operator $a$ on a Hilbert space
with no negative numbers in its spectrum,  we have 
$${\rm sign} \left[ \begin{array}{ccl} 0 & a \\
I & 0 \end{array} \right] = \left[ \begin{array}{ccl} 0 &  a^{\frac{1}{2}} \\
a^{-\frac{1}{2}} & 0 \end{array} \right] .$$

We now turn to the (iterative ) Newton method $X_{k+1} = \frac{1}{2}(X_k + X_k^{-1})$
for ${\rm sign}(a)$.  We will take $X_0 = a$.

\begin{theorem} \label{Newtsign}  Suppose that
$a$ is an element of a unital Banach algebra with ${\rm Sp}(a) \cap i \Rdb = \emptyset$, and let $S = {\rm sign}(a)$. 
Then  the  Newton iterates $X_k$ above for ${\rm sign}(a)$ 
 converge quadratically to $S$, and also $X_k^{-1} \to S$, with
$$\Vert X_{k+1} - S \Vert \leq \frac{1}{2} \Vert X_k^{-1} \Vert
\Vert X_{k} - S \Vert^2 ,$$ and $X_k = (1 - G_0^{2^k})^{-1} 
(1+G_0^{2^k}) S$ for $k \geq 1$, where $G_0 = \kappa(N)$, where $N = 
(a^2)^{\frac{1}{2}}$.   
\end{theorem}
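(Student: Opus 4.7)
The plan is to exploit the factorization $a = SN$ from Proposition~\ref{sign}(7), together with $S^2 = 1$ and the fact that $S$, $N$, $a^{-1}$ and all the iterates $X_k$ lie in the bicommutant $\{a\}''$, which is automatically commutative. This commutativity is what allows the classical matrix-theoretic calculations from \cite[Chapter 5]{High} to transfer essentially verbatim. My overall structure will be: establish the closed-form formula for $X_k$ by induction, deduce norm convergence from a spectral radius bound on $G_0$, and separately extract the quadratic error estimate from an algebraic identity.

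For the base case $k=1$, I would compute $X_0^{-1} = (SN)^{-1} = N^{-1} S = S N^{-1}$, so
\[
X_1 \,=\, \tfrac{1}{2}(SN + SN^{-1}) \,=\, \tfrac{1}{2} S (N + N^{-1}).
\]
Since $N$ is the principal square root of $a^2$ (whose spectrum avoids $(-\infty,0]$), ${\rm Sp}(N)$ lies in the open right half plane, so $1+N$ is invertible, the Cayley transform $G_0 = \kappa(N)$ is well-defined, and the inverse relations $N = (1+G_0)(1-G_0)^{-1}$ and $N^{-1} = (1-G_0)(1+G_0)^{-1}$ hold. Clearing to common denominator using $(1+G_0)^2 + (1-G_0)^2 = 2(1+G_0^2)$ gives $\tfrac{1}{2}(N + N^{-1}) = (1 - G_0^2)^{-1}(1 + G_0^2)$, which is the $k=1$ formula.

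For the inductive step, write $G_k = G_0^{2^k}$ and assume $X_k = (1-G_k)^{-1}(1+G_k) S$. Then $X_k^{-1} = S (1-G_k)(1+G_k)^{-1}$, and pulling $S$ out via commutativity,
\[
X_{k+1} \,=\, \tfrac{1}{2} S \bigl[(1-G_k)^{-1}(1+G_k) + (1-G_k)(1+G_k)^{-1}\bigr] \,=\, S (1-G_k^2)^{-1}(1+G_k^2),
\]
which is the formula at level $k+1$ since $G_k^2 = G_0^{2^{k+1}}$. All inverses used exist because ${\rm Sp}(G_0) = \kappa({\rm Sp}(N))$ lies strictly inside the open unit disk, so ${\rm Sp}(G_k) \subset \{|z|<1\}$ for every $k$. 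The same spectral bound, via the spectral radius formula, gives $\Vert G_0^{2^k} \Vert \to 0$, whence $(1 \pm G_k)^{\pm 1} \to 1$ and therefore both $X_k \to S$ and $X_k^{-1} \to S$ in norm.

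The quadratic estimate follows independently from the algebraic identity
\[
X_{k+1} - S \,=\, \tfrac{1}{2}(X_k + X_k^{-1}) - S \,=\, \tfrac{1}{2} X_k^{-1}(X_k^2 - 2SX_k + 1) \,=\, \tfrac{1}{2} X_k^{-1}(X_k - S)^2,
\]
valid because $SX_k = X_k S$ (both lie in the commutative $\{a\}''$) and $S^2 = 1$. The main obstacle, such as it is, is the bookkeeping: one must know at every step that $X_k$ is invertible and that $X_k \in \{a\}''$, so that $S$ commutes with $X_k$ and all the algebraic manipulations are legal. Both follow cleanly from the explicit formula once obtained; they can also be verified directly by induction, since $\{a\}''$ contains $a$ and is closed under the operations defining $X_{k+1}$, while invertibility is tracked by spectral mapping in the commutative algebra $\{a\}''$, using that the condition ${\rm Sp}(X_k) \cap i\Rdb = \emptyset$ is preserved under $\lambda \mapsto \tfrac{1}{2}(\lambda + \lambda^{-1})$.
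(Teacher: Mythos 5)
Your proposal is correct and follows essentially the same route as the paper: establish the closed form $X_k = (1-G_0^{2^k})^{-1}(1+G_0^{2^k})S$ by induction via the Cayley transform identity, deduce $X_k, X_k^{-1} \to S$ from the spectral radius of $G_0$ being less than $1$, and obtain the quadratic bound from the identity $X_{k+1}-S = \tfrac{1}{2}X_k^{-1}(X_k-S)^2$ (which the paper outsources to Higham's Theorem 5.6 but which you correctly spell out, including the needed commutativity of $\{a\}''$). The only cosmetic difference is that you verify the base case $k=1$ explicitly, which the paper leaves as one of the ``easy details.''
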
 \begin{proof}   We adjust the proof in \cite[Theorem 5.2]{High} slightly, and omit several easy 
details.
By the spectral mapping theorem, since the spectrum of $N$  
 lies in the open right half plane, the spectrum of $G_0$ lies
 in the open unit ball, and hence also the spectrum of $G_0^{2^k}$ lies in this ball.   
So $(1 - G_0^{2^k})^{-1}$ exists.   Set $X_k = (1 - G_0^{2^k})^{-1} 
(1+G_0^{2^k}) S$; we will show that $X_{k+1} = \frac{1}{2}(X_k + X_k^{-1})$.   Indeed
 $\frac{1}{2}(X_k + X_k^{-1}) =  \frac{S}{2}( (1 - G_0^{2^k})^{-1} 
(1+G_0^{2^k})  +  (1 - G_0^{2^k})
(1+G_0^{2^k})^{-1} )$ equals
$$\frac{S}{2} (1 - G_0^{2^k})^{-1} 
(1+G_0^{2^k})^{-1} [(1 - G_0^{2^k})^{2}+ 
(1+G_0^{2^k})^2] = \frac{S}{2} (1 - G_0^{2^{k+1}})^{-1} [2(1 + G_0^{2^{k+1}})]$$
which equals $X_{k+1}$.
Since the spectral radius of $G_0$ is smaller than $1$, 
it follows that $G_0^{2^k} \to 0$ as $k \to \infty$, so that 
$X_k = (1 - G_0^{2^k})^{-1} 
(1+G_0^{2^k}) S \to S$ (we are using the continuity of the inverse at $1$ in a Banach algebra).
Similarly, $X_k^{-1} = (1 - G_0^{2^k}) 
(1+G_0^{2^k})^{-1} S \to S$.   The rest is as in  \cite[Theorem 5.6]{High}.
\end{proof}

\begin{remark}  A common application of the sign function for matrices in numerical analysis 
and engineering is to  solve $ax - xb = y$ for $x$.  Suppose  that the spectrum of $a$ is in the 
negative right half plane and the spectrum of $b$ is in the positive
right half plane.   As on    \cite[p.\ 11]{BhRo}, we have
$$ \left[ \begin{array}{ccl} a & y \\
0 & b \end{array} \right] = \left[ \begin{array}{ccl} 1 & -x \\
0 & 1 \end{array} \right] \, \left[ \begin{array}{ccl} a & 0 \\
0 & b \end{array} \right]  \, \left[ \begin{array}{ccl} 1 & x \\
0 & 1 \end{array} \right] .$$    
The sign of the matrix in the middle is the diagonal  matrix with diagonal  entries $1$ and $-1$, and so it 
follows from Proposition \ref{sign} (6) that 
$$ {\rm sign} \Bigl( \left[ \begin{array}{ccl} a & y \\
0 & b \end{array} \right] \Bigr) = \left[ \begin{array}{ccl} 1 & -x \\
0 & 1 \end{array} \right] \, \left[ \begin{array}{ccl} 1 & 0 \\
0 & - 1 \end{array} \right]  \, \left[ \begin{array}{ccl} 1 & x \\
0 & 1 \end{array} \right] = \left[ \begin{array}{ccl} 1 & 2x \\
0 & -1 \end{array} \right] .$$
Thus $x$ is one half of the $1$-$2$ entry of ${\rm sign} \Bigl( \left[ \begin{array}{ccl} a & y \\
0 & b \end{array} \right] \Bigr)$.  
\end{remark} 

\section{Newton's method for the square root}

Newton's method for the  square root $a^{\frac{1}{2}}$ is
$$X_{k+1} = \frac{1}{2} \, (X_k  + X_k^{-1} \; a),$$   
with $X_0 = I$ usually.

Define $\kappa(\lambda) = \frac{\lambda - 1}{\lambda + 1}$ for $\lambda \in \Cdb, \lambda \neq -1$.  
This map takes the right half plane onto the unit circle (omitting the number $1$).  The inverse of this is the map
$\kappa^{-1}(\lambda) = \frac{1 + \lambda}{1 - \lambda}$.  (Some authors use instead the map 
$\lambda \mapsto \frac{1 - \lambda}{1 + \lambda},$ which is its own inverse.)

\begin{lemma} \label{supq}  Fix $n \in \Ndb$.
The supremum of $\frac{t \, \kappa(t)^{2^n}}{1 - \kappa(t)^{2^n}}$  on $(0,1]$ is  $\frac{1}{2^{n+1}}$, which it converges to as $t \to 0$.    
\end{lemma}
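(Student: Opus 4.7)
The plan is to make the substitution $s = -\kappa(t) = \frac{1-t}{1+t}$, which bijects $(0,1]$ onto $[0,1)$ and satisfies $t = \frac{1-s}{1+s}$. Since $n\ge 1$, the exponent $m := 2^n$ is even, so $\kappa(t)^{m} = s^{m}$ is nonnegative for all $t \in (0,1]$, and our expression becomes
\[
\frac{t\,\kappa(t)^m}{1-\kappa(t)^m} \;=\; \frac{(1-s)\,s^m}{(1+s)\bigl(1-s^m\bigr)}.
\]

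Next I would factor the denominator using $1-s^m = (1-s)(1+s+s^2+\cdots+s^{m-1})$ to cancel the factor $(1-s)$. This reduces the problem to showing that
\[
f(s) \;:=\; \frac{s^m}{(1+s)\bigl(1+s+s^2+\cdots+s^{m-1}\bigr)}
\]
satisfies $\sup_{s \in [0,1)} f(s) = \frac{1}{2m} = \frac{1}{2^{n+1}}$, with the supremum attained only as $s \to 1^-$ (equivalently $t\to 0^+$).

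The cleanest way to handle monotonicity is to invert: expanding $(1+s)(1+s+\cdots+s^{m-1}) = 1 + 2s + 2s^2 + \cdots + 2s^{m-1} + s^m$ and dividing through by $s^m$ gives
\[
\frac{1}{f(s)} \;=\; 1 + \frac{2}{s} + \frac{2}{s^2} + \cdots + \frac{2}{s^{m-1}} + \frac{1}{s^m}.
\]
Each summand $s^{-k}$ for $k\ge 1$ is strictly decreasing on $(0,1)$, so $1/f$ is strictly decreasing there, hence $f$ is strictly increasing on $(0,1)$.

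Finally, letting $s \to 1^-$ in the displayed identity gives $1/f(s) \to 1 + 2(m-1) + 1 = 2m$, so $f(s) \to \frac{1}{2m} = \frac{1}{2^{n+1}}$. Since $f$ is strictly increasing and continuous, this limit is the supremum, and it is approached as $t \to 0^+$, proving both assertions. There is no real obstacle here; the only step requiring thought is spotting that inverting $f$ turns it into a manifestly monotone expression, which is much cleaner than computing $f'$ directly.
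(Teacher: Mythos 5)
Your proof is correct. You use the same change of variables as the paper ($s=-\kappa(t)$, so the problem becomes maximizing $\frac{(1-s)s^m}{(1+s)(1-s^m)}$ on $[0,1)$ with $m=2^n$), but you execute the two remaining steps differently and, in my view, more cleanly. The paper establishes monotonicity by differentiating and bounding the numerator of the derivative via the inequality $k(1-s^2)-2s(1-s^k)\ge k(1-s)^2$, and then evaluates the limit at $s\to 1^-$ by L'H\^opital's rule. You instead cancel the common factor $(1-s)$ against $1-s^m=(1-s)(1+s+\cdots+s^{m-1})$, which removes the $0/0$ indeterminacy entirely: the limit $\frac{1}{2m}$ then follows by direct substitution, and monotonicity becomes transparent because the reciprocal
\[
\frac{1}{f(s)} = 1 + \frac{2}{s} + \cdots + \frac{2}{s^{m-1}} + \frac{1}{s^m}
\]
is a sum of strictly decreasing functions. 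This avoids both the derivative computation and L'H\^opital, at no cost in generality. The one hypothesis you rely on that the paper leaves implicit is that $2^n$ is even (so $\kappa(t)^{2^n}=s^{2^n}\ge 0$); you flag this explicitly, which is a small improvement, since for odd exponents the conclusion would fail.
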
 \begin{proof} To see this, let us change
variables, letting $s = - \kappa(t)$, so that $t = -\kappa(s)$.  Then the function to be 
maximized is $\frac{|\kappa(s)| \, s^k}{1-s^k}$, for $s \in [0,1)$ and
$k = 2^n$.   We claim that this is an increasing function.  Indeed if one takes its derivative,
the denominator is positive as usual, and  the numerator on $(0,1)$ is a positive multiple of
$(-2s + k (1-s^2))(1-s^k) + k(1-s^2)t^k$, and the latter equals
$$k(1-s^2) - 2s (1-s^k) \geq (1-s) [k (1+s) -2ks] = k(1-s)^2 \geq 0,$$
since $2ks(1-s) \leq 2s (1-s^k)$.       Thus the function is increasing, and its supremum
is its limit as $t \to 1^{-}$, which by L'Hopitals rule is $\frac{1}{2^{n+1}}$.
\end{proof}

We now turn to the square root, which has
many equivalent definitions (see e.g.\ \cite[Section 6]{BSan}).  For example it has formula
$$x^{\frac{1}{2}} = \frac{2}{\pi} \, x \, \int_0^\infty \, (t^2 1 + x)^{-1} \, dt,$$
 if $x$ is type $M$ (by substituting $u = t^{\frac{1}{2}}$ 
in the Balakrishnan formula (3.2) in \cite{Haase}), or if $x$ is invertible and 
the spectrum  of $x$ contains no real strictly negative numbers (by Lemma   \ref{Rieszroot}).   

\begin{theorem} \label{Newtsqi}  Suppose that
$a$ is an element of a unital Banach algebra $A$ with ${\rm Sp}(a)$ containing no negative
real numbers nor $0$.  Suppose that $X_0 \in \{ a \}'$ with ${\rm Sp}(a^{-\frac{1}{2}} X_0)$
contained   in the open right half plane.        Then the  Newton iterates $X_k$ above for the square root 
 converge quadratically to $a^{\frac{1}{2}}$, and also $X_k^{-1} \to a^{-\frac{1}{2}}$, with
$$\Vert X_{k+1} - a^{\frac{1}{2}} \Vert \leq \frac{1}{2} \Vert X_k^{-1} \Vert
\Vert X_{k} - a^{\frac{1}{2}} \Vert^2 ,$$ and $X_k = a^{\frac{1}{2}} (1 - G_0^{2^k})^{-1} 
(1+G_0^{2^k}) S$ for $k \geq 1$, where $G_0 = \kappa(N)$, where $N = 
((a^{-\frac{1}{2}} X_0)^2)^{\frac{1}{2}}$.   
\end{theorem}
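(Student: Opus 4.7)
The plan is to reduce the square-root Newton iteration to the sign iteration of Theorem \ref{Newtsign} via the substitution $Y_k = a^{-\frac{1}{2}} X_k$, and then translate the conclusions back.

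First, since $\mathrm{Sp}(a)$ contains no negative reals nor $0$, the element $a$ is invertible and the principal square root $a^{\frac{1}{2}} \in \{a\}''$ exists, as does its inverse $a^{-\frac{1}{2}} \in \{a\}''$. I would show by induction that each $X_k$ is invertible and lies in $\{a\}'$: $X_0$ is given to be in $\{a\}'$, and once $X_k$ commutes with $a$ and is invertible, so does $X_k^{-1}$, hence so does $X_{k+1} = \frac{1}{2}(X_k + X_k^{-1}a)$. Since $a^{\frac{1}{2}}, a^{-\frac{1}{2}} \in \{a\}''$, every $X_k$ also commutes with $a^{\pm\frac{1}{2}}$, so $Y_k = a^{-\frac{1}{2}} X_k \in \{a\}'$ is well defined, and a direct computation gives
\[
Y_{k+1} = a^{-\frac{1}{2}} \cdot \tfrac{1}{2}\bigl(X_k + X_k^{-1} a\bigr) = \tfrac{1}{2}\bigl(Y_k + Y_k^{-1}\bigr),
\]
which is exactly the Newton iteration for $\mathrm{sign}(Y_0)$.

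The hypothesis $\mathrm{Sp}(a^{-\frac{1}{2}} X_0) \subset \mathbb{H}$ says $\mathrm{Sp}(Y_0) \cap i\mathbb{R} = \emptyset$ and moreover $\mathrm{Sp}(Y_0) \subset \mathbb{H}$, so Proposition \ref{sign}(5) gives $\mathrm{sign}(Y_0) = 1$. Applying Theorem \ref{Newtsign} to $Y_0$ inside the commutative closed subalgebra generated by $a^{\pm\frac{1}{2}}$ and $X_0$, we obtain $Y_k \to 1$ and $Y_k^{-1} \to 1$, and for $k \geq 1$
\[
Y_k = (1 - G_0^{2^k})^{-1}(1 + G_0^{2^k}),
\]
where $G_0 = \kappa(N)$ and $N = (Y_0^2)^{\frac{1}{2}} = ((a^{-\frac{1}{2}} X_0)^2)^{\frac{1}{2}}$. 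Multiplying by $a^{\frac{1}{2}}$ on the left yields the stated formula $X_k = a^{\frac{1}{2}} (1 - G_0^{2^k})^{-1}(1 + G_0^{2^k})$ (the extra factor $S = 1$ being $\mathrm{sign}(Y_0)$), and in particular $X_k \to a^{\frac{1}{2}}$ and $X_k^{-1} = Y_k^{-1} a^{-\frac{1}{2}} \to a^{-\frac{1}{2}}$. Invertibility of each $X_k$ is guaranteed because $\mathrm{Sp}(N) \subset \mathbb{H}$ implies $\mathrm{Sp}(G_0) \subset \mathbb{D}$, so $1 \pm G_0^{2^k}$ are invertible for all $k$.

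Finally, for the quadratic estimate, using that $X_k$ commutes with $a^{\frac{1}{2}}$,
\[
X_{k+1} - a^{\frac{1}{2}} = \tfrac{1}{2} X_k^{-1}\bigl(X_k^2 - 2 a^{\frac{1}{2}} X_k + a\bigr) = \tfrac{1}{2} X_k^{-1} \bigl(X_k - a^{\frac{1}{2}}\bigr)^2,
\]
and taking norms gives the claimed inequality. The main obstacle is really just bookkeeping: one must carefully track commutativity and invertibility along the iteration so that the substitution $Y_k = a^{-\frac{1}{2}} X_k$ is legitimate at every step and Theorem \ref{Newtsign} applies verbatim.
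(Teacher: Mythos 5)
Your proposal is correct and is essentially the paper's own argument: the paper simply defers to Higham's Theorem 6.9, whose proof is exactly your substitution $Y_k = a^{-\frac{1}{2}}X_k$ reducing the square-root iteration to the sign iteration of Theorem \ref{Newtsign} applied to $Y_0 = a^{-\frac{1}{2}}X_0$, whose sign is $1$ by Proposition \ref{sign}(5). The one bookkeeping point to tighten is that invertibility of $X_{k+1}$ does not follow merely from invertibility of $X_k$ in your opening induction; it comes, as you note later, from the closed formula for $Y_k$ supplied by Theorem \ref{Newtsign}, since $\mathrm{Sp}(G_0^{2^k})$ lies in the open unit disk.
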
 \begin{proof}   The proof in \cite[Theorem 6.9]{High} works in our setting
too, using our Theorem \ref{Newtsign}
in place of  \cite[Theorem 5.2]{High}.  
\end{proof}  

\begin{remark}   We point out  that  if $A$ is an operator algebra then in the situation of Theorem \ref{Newtsqi} we
also get that if $X_0$ and $X_0^{-2} a$ are  accretive, then 
$X_k$ and $X_k^{-2} a$ are accretive, and  
$X_k^{-1} a^{\frac{1}{2}}$ has numerical range in $S_{\frac{\pi}{4}}$, for all $k$.      We prove this by induction.
If it is true for $k$ then 
$$X_{k+1}^2 a^{-1} = \frac{1}{4}(X_k^2 a^{-1} + 2 \cdot 1 + X_k^{-2}a).$$
All three parts of this are accretive, using Lemma  \ref{inv}.  So $X_{k+1}^2 a^{-1}$ is
accretive, and so also is $X_{k+1}^{-2} a$ by Lemma  \ref{inv}.   Also, $X_{k+1}^{-1} a^{\frac{1}{2}}$
has spectrum in the right half plane as we shall see soon
(around Equation (\ref{two}) below), so  $X_{k+1}^{-1} a^{\frac{1}{2}}$
is the principal square root of $X_{k+1}^2 a^{-1}$ and has numerical range in $S_{\frac{\pi}{4}}$.
Then $X_{k+1}  = \frac{1}{2}(X_k + (X_k^{-1} a^{\frac{1}{2}}) a^{\frac{1}{2}}).$
Now the product of two commuting operators with numerical range in $S_{\frac{\pi}{4}}$ is
accretive \cite{BBS}.  Hence $X_{k+1}$ is accretive, being the average of two accretives.
\end{remark}

We next discuss Newton's method for noninvertible $a$.   
This works for a rather large class of elements in operator algebras.  We will usually take 
$X_0 = 1$ or $X_0 = (a+1)/2$
(note that if $X_0 = 1$ then $X_1 = (a+1)/2$, so we may as well assume $X_0 = 1$).  
An important remark about other `starting values' is stated just after Proposition \ref{newpba}.

\begin{theorem} \label{preq}  If $a$ is an
operator on a Hilbert space 
 with numerical range $W(a) \subset S_\theta$ for some $\theta < \pi$, 
then Newton's method for the square root, with $X_0 = 1$ or $X_0 = (a+1)/2$,
converges to the principal  square root $a^{\frac{1}{2}}$.   Indeed for $n$ large enough,
the $n$th iterate $X_n$  
in Newton's method has distance less than $\frac{C_{\rho} K}{2^{n}}$ 
from $a^{\frac{1}{2}}$.  Here $K$ is Crouziex's constant
(mentioned at the end of the introduction),
and $C_{\rho}$ is any constant greater than $\sec(\frac{\rho}{2})$
where $\rho$ is  the sectorial 
angle of $a$ (thus $W(a) \subset S_\rho$).  In particular,
if $a$ is accretive then $\Vert X_n - a^{\frac{1}{2}} \Vert 
\leq \frac{K}{2^{n-1}}$ for all $n$ large enough.
\end{theorem}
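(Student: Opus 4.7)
My plan is to reduce the bound to a scalar rational-function estimate via Crouzeix's functional calculus, using the closed form for $X_k - a^{\frac{1}{2}}$ from Theorem \ref{Newtsqi}, and then to extend Lemma \ref{supq} from the positive real axis to the sector $S_{\rho/2}$.

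In the invertible case with $X_0 = 1$, the element $a^{-\frac{1}{2}}$ has spectrum in $S_{\rho/2}$ (hence in the open right half-plane since $\rho < \pi$), so Theorem \ref{Newtsqi} applies. Using $\kappa(w^{-1}) = -\kappa(w)$ and $(-1)^{2^k}=1$ for $k \geq 1$, I would simplify the closed form there to
\[
X_k - a^{\frac{1}{2}} = h_k(a^{\frac{1}{2}}), \qquad h_k(z) := \frac{2z\,\kappa(z)^{2^k}}{1 - \kappa(z)^{2^k}}.
\]
The rational function $h_k$ has poles only on the imaginary axis (at $i\cot(\pi j/2^k)$ for suitable $j$) and at infinity, with a removable singularity at the origin where $h_k(0)=1/2^k$. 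Since $W(a^{\frac{1}{2}}) \subset S_{\rho/2}$ by the Hilbert-space case of Corollary \ref{numraoffc}, no pole lies in $W(a^{\frac{1}{2}})$, and Crouzeix's inequality gives
\[
\|X_k - a^{\frac{1}{2}}\| \leq K \sup_{z \in W(a^{\frac{1}{2}})} |h_k(z)|.
\]

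The crux is bounding this supremum by $C_\rho/2^k$ for $k$ large. Lemma \ref{supq} already yields $|h_k(t)| \leq 1/2^k$ on $(0,1]$, and a direct Taylor expansion around $z=0$ shows $h_k(z) \to 1/2^k$ as $z \to 0$ in every direction inside $S_{\rho/2}$. I would extend Lemma \ref{supq} to the sector via the M\"obius substitution $u = -\kappa(z)$, which sends $S_{\rho/2}$ onto a lens-shaped subdomain of the open unit disc bounded by arcs through $\pm i\tan(\rho/4)$; the geometric factor $\sec(\rho/2)$ should emerge as slack in bounding $|1 - u^{2^k}|$ from below on these boundary arcs. On the part of $W(a^{\frac{1}{2}})$ bounded away from $0$, $|\kappa(z)|$ is strictly less than $1$, so $|\kappa(z)|^{2^k}$ decays doubly exponentially, and the clause ``for $n$ large enough''---which in effect depends on $\|a\|$---absorbs this contribution to yield the uniform bound.

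For the non-invertible case I would approximate $a$ by $a_\epsilon := a + \epsilon 1$, which is invertible and still satisfies $W(a_\epsilon) \subset S_\rho$ (sectors of opening $<\pi$ are preserved under translation by positive reals), apply the invertible case to $a_\epsilon$, and pass to the limit $\epsilon \to 0$ using continuity of the Newton iterates (each $X_k^\epsilon$ stays uniformly invertible for small $\epsilon$) and of the principal square root (via Lemma \ref{unsqr}). The alternative starting value $X_0 = (a+1)/2$ is just $X_1$ of the iteration started at $1$, so requires no separate treatment. The principal obstacle is the sectorial extension of Lemma \ref{supq}: the real-variable monotonicity in its proof does not transplant verbatim, and one may need either a Phragm\'en--Lindel\"of argument (admissible because $h_k$ grows only polynomially at infinity, with some care when $\rho \in [\pi/2,\pi)$) or a direct two-variable optimisation on the lens image of $S_{\rho/2}$ under $-\kappa$.
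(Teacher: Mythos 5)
Your overall architecture coincides with the paper's: write $X_n - a^{\frac{1}{2}} = f_n(a^{\frac{1}{2}})$ with $f_n(z) = \frac{2z\,\kappa(z)^{2^n}}{1-\kappa(z)^{2^n}}$, invoke Crouzeix to reduce to $\sup_{W(a^{1/2})}|f_n|$, and split the sector into a region away from $0$ (where $|\kappa(z)|^{2^n}$ decays doubly exponentially) and a region near $0$ to be handled via Lemma \ref{supq}. But two genuine gaps remain.

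First, the quantitative heart of the theorem --- the bound $C_\rho/2^n$ near the origin with the specific constant $\sec(\rho/2)$ --- is exactly the step you defer as ``the principal obstacle,'' and neither of your suggested routes is carried out. The paper needs no sectorial extension of Lemma \ref{supq}: by the maximum modulus theorem the supremum over the truncated sector $S_\theta \cap D(0,\epsilon)$ (with $\theta = \rho/2$) is attained on the boundary ray $z = te^{i\theta}$, and the identity
$$|\kappa(z)|^{2} = \frac{1-2 {\rm Re}\, z + |z|^2}{1+2 {\rm Re}\, z + |z|^2} = -\kappa\Bigl(\frac{2 {\rm Re}\, z}{1 + |z|^2}\Bigr)$$
converts $|f_n(te^{i\theta})|$ into $\frac{2t\,\kappa(s)^{2^{n-1}}}{1-\kappa(s)^{2^{n-1}}}$ with the \emph{real} parameter $s = \frac{2t\cos\theta}{1+t^2}$, to which Lemma \ref{supq} applies verbatim. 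The factor $\sec\theta$ enters through $2t = \sec\theta\cdot s(1+t^2)$, not through a lower bound on $|1-u^{2^k}|$ on boundary arcs as you suggest; and the exponent drops from $2^n$ to $2^{n-1}$ in this reduction, which is what produces the stated constants (e.g.\ $K/2^{n-1}$ in the accretive case). Without this computation the claimed inequality is not established.

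Second, the non-invertible case. Your $\epsilon$-perturbation presupposes that the Newton iterates for $a$ itself are well defined, i.e.\ that each $X_k$ is invertible --- which is precisely the issue when $0 \in {\rm Sp}(a)$ --- and to transfer the bound from $a+\epsilon 1$ to $a$ you would also need the threshold in ``for $n$ large enough'' to be uniform in $\epsilon$. Neither point is addressed. The paper avoids the limit altogether: it proves the closed form $X_n = \frac{1}{2}(X_0+c)(1+G_0^{2^n})\bigl[(1+G_0)\cdots(1+G_0^{2^{n-1}})\bigr]^{-1}$ directly by induction for possibly non-invertible $c = a^{\frac{1}{2}}$, the key point being the Claim that $1$ is the \emph{only} unimodular point of ${\rm Sp}(G_0)$, so that $-1 \notin {\rm Sp}(G_0^{2^j})$ and every factor $1+G_0^{2^j}$, hence every $X_n$, is invertible. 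You should either reproduce that spectral claim or justify the uniformity needed for your limiting argument. (Your reliance on Theorem \ref{Newtsqi} for the closed form is fine in the invertible case, and your observation that $X_0=(a+1)/2$ is just $X_1$ of the iteration started at $1$ matches the paper.)
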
 

\begin{proof}   
First we work in any unital Banach algebra.   
Let $c = a^{\frac{1}{2}}$, whose spectrum
 is contained in a sector  $S_{\theta}$ where 
$\theta < \frac{\pi}{2}$ (see Theorem {\rm \ref{lrs}}).    For now let
$X_0$ be any invertible in the algebra with the property
that $d = X_0^{-1} c$ 
satisfies that  Sp$(d) \setminus \{ 0 \}$ 
is in the open right half plane (this is clearly 
true if $X_0 = 1$ (and we will see that it is
true if $X_0 = (a+1)/2$ and hence also  if $X_0 = a+1$)).  Let $G_0 = (1-d)(1+
d)^{-1}$.
This is the negative of the Cayley transform $\kappa(d)$ of $d$.
  We note that $1$ is in the spectrum of $G_0$ if $c$ is not invertible.
However $-1$ is never in the spectrum of $G_0$.    Indeed  the following is true:

Claim: $1$ is the only number in the spectrum of $G_0$ which has modulus $1$.   
The elements in the spectrum of  $G_0$ with modulus $1$ correspond, by the spectral mapping theorem,
 to elements in the spectrum of  $\kappa(d)$ with modulus $1$,
and these correspond to 
purely imaginary elements (or $0$) in the spectrum of  $d$.
By our hypothesis on $d$ above only $0$ is possible.     However the latter $0$ 
would correspond to 
$1$ in the spectrum of $G_0$, not to $-1$.   

From the Claim it follows also that  $G_k = G_0^{2^k}$  does not 
have $-1$ in its spectrum.  

We next claim that $X_n$ is invertible
and in fact \begin{equation} \label{one}
X_n = \frac{(X_0+c)}{2} (1+G_0^{2^n})  [(1+G_0) (1+G_0^2) \cdots (1+G_0^{2^{n-1}})]^{-1} , \qquad n \in \Ndb. \end{equation}
 We prove (\ref{one}) by induction.     We leave it to the reader to check the case $n = 1$.  Assume it is true for $n$.
We use the polynomial identity 
\begin{equation} \label{polyid}   (1-z) \prod_{k=0}^{n-1} (1+z^{2^k}) = 1 - z^{2^n} \end{equation} setting $z = G_0$.
Note that $1 - G_0 = 2c (X_0+c)^{-1}$,  so that $2c (X_0+c)^{-1} [(1+G_0) (1+G_0^2) \cdots (1+G_0^{2^{n-1}})] = 1 - G_0^{2^n}$.
Now $X_n^{-1} c$ equals
$$2c (X_0+c)^{-1}  (1+G_0^{2^n})^{-1} 
 [(1+G_0) (1+G_0^2) \cdots (1+G_0^{2^{n-1}})]  
= (1+G_0^{2^n})^{-1} (1-G_0^{2^n}).$$
That is,
\begin{equation} \label{two}  X_n^{-1} c = -\kappa(G_0^{2^n}). \end{equation}
By the spectral mapping theorem and what we said earlier about elements in the spectrum of  $G_0$ with modulus $1$,
it  follows that Sp$(X_n^{-1} c) \setminus \{ 0 \}$ is contained in the open right half plane.
We remark in passing that in the Hilbert space
operator case and $X_0^{-1} c$ is accretive (which is true if e.g.\ $X_0 = 1$),  then 
by  the theory of the Cayley transform $G_0$ is a contraction, hence  $\Vert - G_0^{2^n} \Vert \leq 1$, and 
so  $X_n^{-1} c$ is accretive.
As we saw earlier, if $a$ is an invertible 
operator on a Hilbert space and $X_0 = 1$ then $W(X_n^{-1} c) \subset S_{\frac{\pi}{4}}$ for all $n$.   (We imagine that 
this should be true even if $a$ is not   invertible.)  

Thus $$X_{n+1} = \frac{1}{2} (X_n + X_n^{-1}a) = \frac{X_n}{2} (1 + (X_n^{-1} c )^2)
= \frac{X_n}{2} (1 + \kappa(G_0^{2^n})^2),$$ 
which equals $\frac{X_n}{2} (2 (1 + G_0^{2^{n+1}})(1 + G_0^{2^n})^{-2}))$,
 using the easily checked  identity $1 + \kappa(w)^2 = 2(1 + w^2)(1+w)^{-2}$, which is
true for any $w$ with $1 + w$ invertible.  
Thus $$X_{n+1} =  \frac{(X_0+c)}{2} (1+G_0^{2^{n+1}})  [(1+G_0) (1+G_0^2) \cdots (1+G_0^{2^{n}})]^{-1}$$  as desired in the 
induction step.  

Suppose that $X_0 = p(c)$ where $p(z)$ is a nonvanishing analytic function
on a neighborhood of the spectrum of $c$.  Our assumption on $d$ above
follows if $q(z) = z/p(z)$ is in the open right half plane for all
$z \in {\rm Sp}(c) \setminus \{ 0 \}$.  This in turn follows for example if $a$ 
is accretive (so that $W(c) \subset S_{\frac{\pi}{4}}$) and if $p({\rm Sp}(c)) \subset S_{\frac{\pi}{4}}$.  
We thus have $X_n - c = f_n(c)$ where $$f_n(z) = \frac{(p(z)+z)}{2} (1+(\kappa \circ q)^{2^{n}}) \, [(1+\kappa(q(z))) (1+\kappa(q(z))^{2}) \cdots (1+\kappa(q(z))^{2^{n-1}})]^{-1} - z.$$
This is a rational function.      Indeed  using the polynomial identity (\ref{polyid})
 we have
\begin{equation} \label{three} f_n(z) = \frac{z(1+\kappa(q(z))^{2^{n}})}{1 - \kappa(q(z))^{2^{n}}} - z
=  \frac{2z \, \kappa(q(z))^{2^{n}}}{1 - \kappa(q(z))^{2^{n}}}, \qquad {\rm Re} \, z >  0, \end{equation}
and $f_n(0) = \frac{p(0)}{2^{n}}$.  (We note that assuming that $q(z) = z/p(z)$ is in the open right half plane for all
$z \in {\rm Sp}(c) \setminus \{ 0 \}$, forces 
$|\kappa(q(z))| = 1$ only when $q(z) = 0$, that is, only when $z = 0$.
The question is whether $f_n(c) \to 0$ as $n \to \infty$.    This would follow from the
continuity of the functional calculus {\em if} all of the $f_n$ were analytic on a fixed neighborhood of $0$, but unfortunately that is not generally the case.)

We remark that if $X_0 = 1$
then $G_0 = (1-c)(1+c)^{-1}$, the negative of the 
 Cayley transform $\kappa(c)$ of $c$.  
Equation (\ref{one}) becomes
\begin{equation} \label{one'}
X_n = \frac{1+c}{2} \; (1+G_0^{2^n}) (1+c)^2  [(1+G_0) (1+G_0^2) \cdots (1+G_0^{2^{n-1}})]^{-1} , \qquad n \in \Ndb. \end{equation} 
We still have $X_n - c = f_n(c)$, but the formula for $f_n$ 
in this case (c.f.\ the centered formula a few lines above Equation (\ref{three}))
  becomes
\begin{equation} \label{three''}
f_n(z) = \frac{1+z}{2} \; (1+\kappa(z)^{2^{n}}) [(1+\kappa(z)) (1+\kappa(z)^{2}) \cdots (1+\kappa(z)^{2^{n-1}})]^{-1} - z. \end{equation}
and so again using the polynomial identity (\ref{polyid})
 Equation (\ref{three}) becomes
 \begin{equation} \label{three'} f_n(z) = \frac{z(1+\kappa(z)^{2^{n}})}{1 - \kappa(z)^{2^{n}}} - z
=  \frac{2z \, \kappa(z)^{2^{n}}}{1 - \kappa(z)^{2^{n}}}, \qquad {\rm Re} \, z >  0,\end{equation}
and $f_n(0) = \frac{1}{2^{n}}$.   Again, the question is whether $f_n(c) \to 0$ as $n \to \infty$, which  would follow from the continuity of the Riesz
functional calculus {\em if} all of the $f_n$ were analytic on a fixed neighborhood of $0$, but unfortunately
that is not the case.  However if we are in an operator algebra then one may use 
a variant of the functional calculus for spectral sets, for example Crouzeix's 
analytic functional calculus mentioned at the end of the introduction.  This we now do. 

Henceforth,
assume we are in an operator algebra, and that $X_0 = 1$.
The numerical range
$W(c)$ is contained in $S_{\theta}$ where $\theta = \rho/2$ (see e.g.\ \cite{LRS,BBS}).
We will assume for clarity that $a$ is accretive, and so we may 
take $\theta = \frac{\pi}{4}$, the case that $\frac{\pi}{4} < \theta < \frac{\pi}{2}$
will be discussed at the end.  
It is a well known result of Crouzeix 
that the  numerical range of any operator is a $K$-{\em spectral set}
 for a positive constant 
$K < 12$.   Thus $\Vert  f_n(c) \Vert \leq K \Vert f_n \Vert_{W(c)}$ for a constant $K$
depending on the shape of (a closed region containing) the numerical range
of $c$.    
We will estimate  
$\Vert f_n \Vert_{E}$ where $E$ is the sector
of the circle of radius $\Vert c \Vert$ contained in $S_{\theta}$,
and hence see that $\Vert f_n \Vert_{W(c)} \leq \Vert f_n \Vert_{E}  \to 0$ as $n \to \infty$.
It is easy to see that $f_n$ has limit $\frac{1}{2^{n+1}}$ as one
approaches $0$ from the right.
Fix a small $\delta > 0$.  If one considers the picture of the image  of $E$ 
under the map $z \mapsto 
\frac{1-z}{1+z}$, one sees that $|\kappa(z)| < 1- \delta$ for all $z \in E
\setminus D(0,\epsilon)$, for a small $\epsilon > 0$ (independent
of $n$).
Hence for such $z$ we have
$$|f_n(z)| \leq  \frac{2|z| \, |\kappa(z)|^{2^{n}}}{1 - |\kappa(z)|^{2^{n}}}
\leq \frac{2 \Vert c \Vert (1- \delta)^{2^{n}}}{1 - (1- \delta)^{2^{n}}}$$  
The right side will be less than $\frac{1}{2^{n}}$ for $n$ large enough,
and so we see that for $n$ large enough, 
the maximum of $|f_n(z)|$ is achieved on $E \cap D(0,\epsilon)$.
By a similar argument,
and the maximum modulus theorem, the maximum of $|f_n(z)|$ is achieved on 
the boundary lines of $E \cap D(0,\epsilon)$, and by symmetry on the 
upper of these two lines.  Thus if $\theta = \frac{\pi}{4}$
we may assume that $z = t(1+i)$ for
$0 < t < \epsilon$.  Using the identity
$$|\kappa(z)|^{2} =  \frac{1-2 {\rm Re} z + |z|^2}{1+2 {\rm Re} z + |z|^2}
= -\kappa(\frac{2 {\rm Re} \, z}{1 + |z|^2}),$$ we see that  
\begin{equation} \label{fi} |f_n(z)| \leq \frac{2|z| \, |\kappa(z)|^{2^{n}}}{1 - |\kappa(z)|^{2^{n}}}
=
\frac{2 \sqrt{2} t \kappa(s)^{2^{n-1}}}{1 - \kappa(s)^{2^{n-1}}}
= \frac{\sqrt{2} s \kappa(s)^{2^{n-1}}}{1 - \kappa(s)^{2^{n-1}}} (1+2t^2), \end{equation}
where $s = \frac{2t}{1+2t^2}$. 
By Lemma \ref{supq} the supremum of  the last function is $\leq \frac{\sqrt{2}}{2^{n}} (1+2t^2)
< \frac{1}{2^{n-1}}$ if $2 \epsilon^2 < \sqrt{2}-1$.  
Thus given $\epsilon > 0$ we see that
for $n$ large enough we  have  
$\Vert f_n \Vert_{E} \leq \frac{\sqrt{2}}{2^{n}} (1+\epsilon)
< \frac{1}{2^{n-1}}$.  Hence $$\Vert X_n - a^{\frac{1}{2}} \Vert  \leq  \frac{K}{2^{n-1}}.$$ 
(Note that if we do not assume
$X_0 = (1 +a)/2$, but instead $X_0 = p(c)$ as we had earlier, then 
the same analysis shows that
$$|f_n(z)| \leq  \frac{2 |z| \kappa(t)^{2^{n-1}}}{1 - \kappa(t)^{2^{n-1}}}$$
where now $t = \frac{2 \, {\rm Re} \, q(z)}{1 + |q(z)|^2}$, which is still
in $[0,1]$.  However it may not be easy to dominate
$|z|$ by a multiple of this $t$ as we did before, unless $p(z)$ is of a 
very special form, like $(1+z)/2$).)

If $W(c) \subset S_\theta$ for $\theta < \pi/2$, set $z = t e^{i \theta}$,
and Equation (\ref{fi}) becomes 
$$|f_n(z)| \leq \frac{2t \kappa(s)^{2^{n-1}}}{1 - \kappa(s)^{2^{n-1}}} 
\leq \sec (\theta) \frac{s \kappa(s)^{2^{n-1}}}{1 - \kappa(s)^{2^{n-1}}} (1+t^2) 
\leq \frac{\sec (\theta) \, (1+t^2)}{2^{n}} < \frac{C_{\rho}}{2^{n}},$$ 
for $\epsilon$ small enough, where $s = \frac{2t \cos \theta}{1+t^2}$
and $C_{\rho}$ is any constant greater than $\sec(\frac{\rho}{2})$.
    \end{proof}  
  
\begin{remark}    1) \ With a little more work in the last proof one should be able to
show that 
the maximum of $|f_n(z)|$ on 
$W(c)$, or on the intersection $S_{\theta}$ with the disk of radius $\Vert c \Vert$, 
is achieved at $0$.    This also seemed to be confirmed by numerical 
computations for various values of $n$.   If this is the case
 then $C_\rho$ may be replaced by $1$ in the 
estimate in the last result.   That is,
$\Vert X_n - a^{\frac{1}{2}} \Vert  \leq  \frac{K}{2^{n}}$.

\medskip 

2) \  Thus $\Vert X_n \Vert \leq \Vert a^{\frac{1}{2}} \Vert + \frac{C}{2^{n-1}}$ 
for a constant $C$.
One should also be able to get an estimate for $\Vert X_n^{-1} \Vert$.
Indeed using Crouziex's functional calculus
$\Vert X_n^{-1} \Vert \leq K \Vert g_n \Vert_{W(c)}$ where
$$g_n(z) = \frac{2}{1+z} \, 
(1+\kappa(z)^{2^{n}})^{-1} [(1+\kappa(z)) (1+\kappa(z)^{2}) \cdots (1+\kappa(z)^{2^{n-1}})].$$ 
We expect that $\Vert g_n \Vert_{W(c)} = 2^{n}$ if $a$ is not invertible (indeed
in this case we have $\Vert g_n \Vert_{W(c)} \geq g_n(0) = 2^{n}$).

\medskip

3)\  If $a$ is accretive one may apply Newton's method
to $a + \frac{1}{n} 1$, to get approximants for $a^{\frac{1}{2}}$.
This suggests at first sight 
that the following variant of Newton's method
might work: $X_{n+1} = \frac{1}{2}  (X_n + X_n^{-1} (a + \frac{1}{n} 1))$.
However since $X_n^{-1}$ may be growing at an order of $2^n$ or faster
this seems dangerous.  We conjecture that 
$X_{n+1} = \frac{1}{2}  (X_n + X_n^{-1} (a + \frac{1}{3^n}1))$ would 
work for all accretive operators $a$ on a Hilbert space, and possibly also in a
Banach algebra.
\end{remark}

\begin{proposition} \label{preqmat}  
If $x$ is a matrix with no strictly 
negative eigenvalues, and a square root in $\{ x \}''$,
 then Newton's method with $x_0 = (x+1)/2$ converges to 
 the principal  square root $x^{\frac{1}{2}}$. \end{proposition}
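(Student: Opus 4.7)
The plan is to use the hypothesis ``square root in $\{x\}''$'' to split $x$ into an invertible part (where Theorem~\ref{Newtsqi} applies) and a zero part (where the iteration trivializes), rather than attempting the delicate estimates of Theorem~\ref{preq}.

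First I would observe that since $\{x\}'' = \Cdb[x]$ for a matrix, any $y \in \{x\}''$ with $y^2 = x$ has the form $y = p(x)$ for some polynomial $p$. If $x$ had a Jordan block $J_k(0)$ at the eigenvalue $0$ with $k \geq 2$, then $p(J_k(0))$ is an upper-triangular Toeplitz matrix with diagonal entry $p(0)$ and superdiagonal entry $p'(0)$; squaring, the equation $p(J_k(0))^2 = J_k(0)$ would simultaneously demand $p(0)^2 = 0$ and $2 p(0) p'(0) = 1$, which is impossible. Hence every Jordan block of $x$ at $0$ has size $1$, and the spectral decomposition of $x$ splits $\Cdb^N = E_+ \oplus E_0$ with $E_0 = \ker x$, $E_+$ the sum of the generalized eigenspaces for the nonzero eigenvalues, and $x = x_+ \oplus 0$ where $x_+$ is invertible with $\Spe(x_+) \subset \Cdb \setminus (-\infty, 0]$.

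Next I would note that the starting value $X_0 = (x + I)/2$ lies in $\{x\}''$ and is invertible since $-1 \notin \Spe(x)$; inductively each Newton iterate $X_{n+1} = \tfrac{1}{2}(X_n + X_n^{-1} x)$ also lies in $\{x\}''$ whenever $X_n$ is invertible, so every $X_n$ respects the block decomposition, say $X_n = X_n^+ \oplus X_n^0$. On the invertible block I would apply Theorem~\ref{Newtsqi} with $a = x_+$ and starting value $X_0^+ = (x_+ + I)/2 \in \{x_+\}'$: the spectrum of $x_+^{-1/2} X_0^+ = \tfrac{1}{2}(x_+^{1/2} + x_+^{-1/2})$ consists of the points $\tfrac{1}{2}(\mu^{1/2} + \mu^{-1/2})$ for $\mu \in \Spe(x_+)$, each lying in the open right half plane because both $\mu^{1/2}$ and $\mu^{-1/2}$ do (as $\mu \notin (-\infty, 0]$). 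Theorem~\ref{Newtsqi} then delivers $X_n^+ \to x_+^{1/2}$ quadratically, with each $X_n^+$ invertible. On the block $E_0$ the recurrence collapses to $X_{n+1}^0 = X_n^0/2$ starting from $X_0^0 = \tfrac{1}{2} I_{E_0}$, so $X_n^0 = 2^{-(n+1)} I_{E_0} \to 0$, with each $X_n^0$ invertible, validating the inductive use of Newton's step. Combining, $X_n \to x_+^{1/2} \oplus 0 = x^{1/2}$, the principal square root.

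The only substantive point in the plan is the opening algebraic reduction, namely that the existence of any square root in $\{x\}''$ genuinely forces the nilpotent part of $x$ at $0$ to vanish; once that observation is in hand, Theorem~\ref{Newtsqi} handles the invertible block and the $E_0$-block is a one-line computation, bypassing entirely the analytic subtleties at $0$ that obstructed the general Banach-algebra argument of Theorem~\ref{preq}.
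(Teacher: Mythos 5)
Your proof is correct, and its skeleton is the same as the paper's: both arguments reduce the proposition to showing that the hypothesis of a square root lying in $\{x\}''$ forces the eigenvalue $0$ to be semisimple, and then invoke convergence of the Newton iteration for matrices with semisimple zero eigenvalue. The execution of each half is genuinely different, however. For the semisimplicity step the paper does not use $\{x\}''=\Cdb[x]$; it observes that the (conjugated) support projection of a putative nontrivial Jordan block $J_0$ at $0$ lies in $\{x\}'$, hence commutes with $x^{\frac{1}{2}}\in\{x\}''$, so that $J_0$ itself would acquire a square root, contradicting the classical fact (cited to \cite[Exercise 1.25]{High}) that a nontrivial nilpotent Jordan block has none. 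Your Toeplitz computation forcing $p(0)^2=0$ and $2p(0)p'(0)=1$ simultaneously proves exactly that classical fact in the relevant case, at the cost of importing the bicommutant theorem for a single matrix, $\{x\}''=\Cdb[x]$, which is true but should be cited. For the convergence step the paper simply quotes \cite[Theorems 6.9 and 6.10]{High}, which cover the invertible case and the singular case with semisimple zero respectively; your decomposition $x=x_+\oplus 0$, with Theorem \ref{Newtsqi} applied on the invertible block (your verification that ${\rm Sp}\bigl(x_+^{-\frac{1}{2}}X_0^+\bigr)=\bigl\{\tfrac{1}{2}(\mu^{\frac{1}{2}}+\mu^{-\frac{1}{2}})\bigr\}$ lies in the open right half plane is exactly right) and the explicit collapse $X_{n+1}^0=\tfrac{1}{2}X_n^0\to 0$ on the kernel, amounts to a self-contained proof of the case of Higham's Theorem 6.10 that is needed. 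What your version buys is independence from those two external citations and an internal use of Theorem \ref{Newtsqi}; what the paper's version buys is brevity.
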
 \begin{proof}    If $x$ is invertible then this follows from
\cite[Theorem 6.9]{High}.  By \cite[Theorem 6.10]{High} we just need to show that if $0$ 
is an eigenvalue
then it is a semisimple eigenvalue, that is, there is no nontrivial Jordan block
for the eigenvalue $0$.   If there was such a nontrivial Jordan block $J_0$ then
first suppose that $x = V^{-1} J_0 V$.  Then $x$ has no
square root  as is well known
(see  e.g.\ \cite[Exercise 1.25]{High}).   Otherwise, suppose that $x = V^{-1} (J_0 
\oplus z) V$.   if $p$ is the support projection of $J_0$ then
$V^{-1} p V$ commutes with $x$ and hence also with $x^{\frac{1}{2}}$.
Thus $V^{-1} p V x^{\frac{1}{2}}$ is a square root of
$V^{-1} p V x =  V^{-1} J_0 V$.  However $J_0$ has no square root as we said above,
a contradiction.  \end{proof}

\section{The geometric mean, and solving $x a^{-1} x = b$}  
 In this section we note that Drury's 
results from \cite[Section 3]{Dr} for the geometric mean of matrices with
(strictly) positive definite real part, generalize to strictly accretive elements in
a unital operator algebra.   We also establish a few more aspects of this mean.
We remark that the geometric mean of positive matrices and operators dates back 
to work of Pusz-Woronowicz and Ando (see \cite{LL} for a survey).  

\begin{theorem} \label{straccr}  Let $a$ and $b$ be strictly accretive elements in
a unital operator algebra.  Then 
$$G = a^{\frac{1}{2}} (a^{-\frac{1}{2}} b a^{-\frac{1}{2}})^{\frac{1}{2}} a^{\frac{1}{2}}$$
is strictly accretive too.  Moreover $G$ is the unique strictly accretive solution
to the equation $x a^{-1} x = b$,
and $G = b^{\frac{1}{2}} (b^{-\frac{1}{2}} a b^{-\frac{1}{2}})^{\frac{1}{2}} b^{\frac{1}{2}}.$ 
\end{theorem}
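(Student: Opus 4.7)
The plan is to follow Drury's argument for the matrix case and verify each step transfers to the operator-algebra setting via the spectral and numerical-range machinery built up earlier in the paper. Strict accretivity and invertibility of $a$ together with Lemma \ref{inv} give that $a^{-1}$ is strictly accretive, and Corollary \ref{numraoffc} places $W(a^{\pm 1/2})$ in a sector of angle strictly less than $\pi/4$. To define $c^{1/2}$ where $c := a^{-1/2} b a^{-1/2}$, one needs ${\rm Sp}(c) \cap (-\infty, 0] = \emptyset$; since $c$ is similar to $a^{-1} b$, it suffices to exclude $\lambda \le 0$ from ${\rm Sp}(a^{-1} b)$. If instead such a $\lambda$ were an approximate eigenvalue, unit vectors $v_n$ with $(b - \lambda a) v_n \to 0$ would yield ${\rm Re}\,\langle b v_n, v_n\rangle - \lambda\, {\rm Re}\,\langle a v_n, v_n\rangle \to 0$, contradicting the uniform positive lower bounds from strict accretivity of $a$ and $b$. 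The identity $G a^{-1} G = a^{1/2} c^{1/2} (a^{1/2} a^{-1} a^{1/2}) c^{1/2} a^{1/2} = a^{1/2} c a^{1/2} = b$ then follows by direct calculation.

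The main obstacle is proving $G$ strictly accretive. My plan is a homotopy argument: set $a(t) = (1-t) 1 + ta$ and $b(t) = (1-t) 1 + tb$ for $t \in [0,1]$, each strictly accretive by convexity of the numerical range, and let $G(t)$ denote the corresponding expression. Continuity of the principal-square-root functional calculus on elements whose spectrum avoids $(-\infty, 0]$ makes $t \mapsto G(t)$ continuous, with $G(0) = 1$. Since the strictly accretive elements form an open set, if $G$ left this set at a first time $t^*$ there would be a state $\varphi$ with ${\rm Re}\,\varphi(G(t^*)) = 0$; one would then combine the identity $G(t^*) a(t^*)^{-1} G(t^*) = b(t^*)$ with strict accretivity of $a(t^*)$ and $b(t^*)$ to derive a contradiction. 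The delicate point is that in a general operator algebra $\varphi$ need not be a vector state, so the matrix-style pairing with a Hilbert space vector does not transfer verbatim, and one may have to pass through a GNS-type construction. An alternative, possibly cleaner, route is to produce an integral representation of $G$ with manifestly strictly accretive integrand, for example built from $(s a^{-1} + b^{-1})^{-1}$, which by Lemma \ref{inv} is strictly accretive for each $s > 0$, so that strict accretivity of $G$ is preserved under the integration.

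Finally, if $x$ is another strictly accretive solution to $x a^{-1} x = b$, then $y := a^{-1/2} x a^{-1/2}$ squares to $c$, and the uniqueness part of Theorem \ref{lrs} identifies $y$ with $c^{1/2}$ provided ${\rm Sp}(y) \subset S_{\pi/2}$. The latter inclusion uses strict accretivity of $a$, $x$, and $b$ jointly: an approximate eigenvalue $\lambda$ of $a^{-1} x$ is a limit of ratios $\varphi_n(x)/\varphi_n(a)$, while $\lambda^2$ is the corresponding limit of $\varphi_n(b)/\varphi_n(a)$, and confining each of these arguments to $(-\pi/2, \pi/2)$ forces $\arg\lambda \in (-\pi/2, \pi/2)$ by a short case analysis. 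Thus $x = G$. The symmetric formula $G' := b^{1/2} (b^{-1/2} a b^{-1/2})^{1/2} b^{1/2}$ is strictly accretive by the same argument with $a$ and $b$ interchanged, and a direct calculation using $(b^{-1/2} a b^{-1/2})^{-1} = b^{1/2} a^{-1} b^{1/2}$ shows $G' a^{-1} G' = b$; uniqueness then gives $G = G'$.
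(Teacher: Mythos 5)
Your parenthetical ``alternative, possibly cleaner, route'' is in fact the paper's proof, and it is worth spelling out because it does all the work at once: Lemma \ref{Rieszroot} applied to $c = a^{-\frac{1}{2}} b a^{-\frac{1}{2}}$ gives $G^{-1} = \frac{2}{\pi}\int_0^\infty a^{-\frac{1}{2}}(t^2 1 + c)^{-1}a^{-\frac{1}{2}}\,dt = \frac{2}{\pi}\int_0^\infty (ta + \frac{1}{t}b)^{-1}\,\frac{dt}{t}$. Each integrand is the inverse of a strictly accretive invertible, hence strictly accretive by Lemma \ref{inv}; the integral is therefore strictly accretive, so $G$ is strictly accretive by Lemma \ref{inv} again, and the substitution $u = 1/t$ yields the symmetric formula $G = b^{\frac{1}{2}}(b^{-\frac{1}{2}}ab^{-\frac{1}{2}})^{\frac{1}{2}}b^{\frac{1}{2}}$ for free. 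Uniqueness is then quoted from Drury's Proposition 3.5, the only Banach-algebraic point being that ${\rm Sp}(HG^{-1})$ avoids $(-\infty,0]$ because $t1 + HG^{-1} = G^{-1}(tG+H)$ is invertible for $t \geq 0$.

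The route you actually commit to has its two crucial steps asserted rather than proved. In the homotopy argument, the sentence ``one would then combine the identity \dots to derive a contradiction'' is the entire content of the theorem; it can be filled as follows, and your worry about non-vector states is harmless since $W(G(t^*))$ is the closure of the spatial numerical range. Take unit vectors $v_n$ with ${\rm Re}\,\langle G(t^*)v_n, v_n\rangle \to 0$; since $G(t^*)$ is accretive (a closed condition), the positive operator $G(t^*) + G(t^*)^*$ kills $v_n$ asymptotically, so $G(t^*)^* v_n = -G(t^*)v_n + o(1)$, whence ${\rm Re}\,\langle b(t^*) v_n, v_n\rangle = {\rm Re}\,\langle a(t^*)^{-1}G(t^*)v_n, G(t^*)^*v_n\rangle \leq -\epsilon \Vert G(t^*)v_n\Vert^2 + o(1)$, contradicting strict accretivity of $b(t^*)$. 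You also need more than bare ``continuity of the principal square root'' for $t \mapsto G(t)$: the square root is not continuous on the set of elements with spectrum off $(-\infty,0]$ without uniform control, and here one should invoke Lemma \ref{unsqr} together with a uniform type-$M$ bound as in Lemma \ref{istyp} and the proof of Corollary \ref{accr}. In the uniqueness step, the ``short case analysis'' is likewise the whole argument and is misstated as written: the ratios $\varphi_n(x)/\varphi_n(a)$ have arguments only in $(-\pi,\pi)$, not $(-\pi/2,\pi/2)$, and one must exploit all three constraints $\arg\varphi_n(a)$, $\arg\varphi_n(a) + \arg\lambda$, $\arg\varphi_n(a) + 2\arg\lambda \in (-\pi/2,\pi/2)$ (asymptotically, mod $2\pi$) to exclude $|\arg\lambda| \geq \pi/2$. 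Both gaps are fillable, so your plan can be made to work, but as submitted it is a plan rather than a proof; the integral representation you mention in passing is the shorter path.
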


\begin{proof}  We slightly rewrite Drury's argument.   The first part of the
proof works in any unital Banach algebra: 
note that $$t 1 + a^{-\frac{1}{2}} b a^{-\frac{1}{2}} = a^{-\frac{1}{2}} (t a + b)a^{-\frac{1}{2}}, \qquad t \geq 0,$$
is invertible since $ta + b$ is strictly accretive.  So the spectrum of
$a^{-\frac{1}{2}} b a^{-\frac{1}{2}}$ contains no negative numbers or $0$,
and by the spectral mapping theorem the spectrum of
$(a^{-\frac{1}{2}} b a^{-\frac{1}{2}})^{\frac{1}{2}}$ is contained
in ${\mathbb H}$.  Similarly for the spectrum of
$(b^{-\frac{1}{2}} a b^{-\frac{1}{2}})^{\frac{1}{2}}$.
Clearly $G a^{-1} G = b$. 
By Lemma \ref{Rieszroot} we have
$$\frac{2}{\pi} \, \int_0^\infty \, a^{-\frac{1}{2}} \, (t^2 1 +
a^{-\frac{1}{2}} b a^{-\frac{1}{2}})^{-1} a^{-\frac{1}{2}} \, dt =
a^{-\frac{1}{2}} (a^{-\frac{1}{2}} b a^{-\frac{1}{2}})^{-\frac{1}{2}}
a^{-\frac{1}{2}} = G^{-1}.$$
We may rewrite this (convergent) integral in the more symmetric form  
$$\frac{2}{\pi} \, \int_0^\infty \, (t a + \frac{1}{t} b)^{-1} \, \frac{dt}{t}.$$
At this point we assume that $A$ is an operator algebra.
Note that for $0 < t < \infty$ we have that 
$t a + \frac{1}{t} b$ is strictly accretive,
and so by Lemma \ref{inv}, so is $(t a + \frac{1}{t} b)^{-1}$.
By a basic fact about integrals of positive functions we see that
the integral yields a strictly accretive element.
By Lemma \ref{inv} the inverse $G$ is strictly accretive too.  
Making the substitution $u = 1/t$ in the integral, we see that
the symmetry is perfect, and so $G$ equals
$$a^{\frac{1}{2}} (a^{-\frac{1}{2}} b a^{-\frac{1}{2}})^{\frac{1}{2}} a^{\frac{1}{2}} =
b^{\frac{1}{2}} (b^{-\frac{1}{2}} a b^{-\frac{1}{2}})^{\frac{1}{2}} b^{\frac{1}{2}}.$$

The argument in \cite[Proposition 3.5]{Dr} shows that
there is a unique strictly accretive 
$G$ satisfying $Ga^{-1}G = b$.  There is one point in that proof where one needs
that the spectrum of $H G^{-1}$ contains no negative numbers, for $H$ as in that paper, 
but this follows since 
$$t 1 + H G^{-1} = G^{-1}(tG + H), \qquad t \geq 0,$$
is invertible since $t G + H$ is strictly accretive. 
\end{proof}

Drury writes $G$ in the last result as $a \# b$, the {\em geometric mean}. 
As in \cite[Proposition 3.1]{Dr} we deduce: 

\begin{corollary}[Drury] \label{numraco} 
If $a$ and $b$ are as in the last result, and if
$W(a)$ and $W(b)$ are inside $S_\theta$ for some $\theta < \frac{\pi}{2}$,
then $W(a \# b) \subset S_\theta$.    \end{corollary}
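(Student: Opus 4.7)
The plan is to read off the sector condition for $a \# b$ directly from the symmetric integral representation
$$G^{-1} \;=\; \frac{2}{\pi} \int_0^\infty \bigl(t a + \tfrac{1}{t} b\bigr)^{-1} \, \frac{dt}{t}$$
established in the proof of Theorem \ref{straccr}, and then to take inverses one more time via Lemma \ref{inv}.

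First I would check that the integrand lies in $S_\theta$ pointwise. For every fixed $t > 0$ and every state $\varphi$,
$$\varphi\bigl(ta + \tfrac{1}{t} b\bigr) \;=\; t\,\varphi(a) + \tfrac{1}{t}\,\varphi(b),$$
which is a positive-real combination of two elements of $S_\theta$; since $S_\theta$ is a closed convex cone in $\mathbb{C}$, this sum lies in $S_\theta$. Hence $W(ta + \tfrac{1}{t} b) \subset S_\theta$, and because $\theta < \pi/2$ the second statement of Lemma \ref{inv} applies to give $W\bigl((ta + \tfrac{1}{t} b)^{-1}\bigr) \subset S_\theta$ for every $t > 0$.

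Next I would transport this pointwise information through the integral. Fixing any state $\varphi$ and applying it under the integral sign (legitimate since $\varphi$ is continuous and the operator integral converges in norm),
$$\varphi(G^{-1}) \;=\; \frac{2}{\pi} \int_0^\infty \varphi\bigl((ta + \tfrac{1}{t} b)^{-1}\bigr) \, \frac{dt}{t}.$$
Each value of the scalar integrand sits in the closed convex cone $S_\theta$, so every Riemann sum of the right-hand side is in $S_\theta$, and passing to the limit puts $\varphi(G^{-1})$ in $S_\theta$ as well. Thus $W(G^{-1}) \subset S_\theta$, and a final application of Lemma \ref{inv} yields $W(G) \subset S_\theta$, as desired. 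The only mildly subtle point is the scalar Riemann-sum step — namely that a $\tfrac{dt}{t}$-integral of an $S_\theta$-valued function stays in $S_\theta$ — but this is immediate from $S_\theta$ being a closed convex cone in $\mathbb{C}$; no genuine obstacle remains.
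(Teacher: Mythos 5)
Your proof is correct and is essentially the argument the paper intends: the paper simply defers to Drury's Proposition 3.1, whose proof (and the proof of Theorem \ref{straccr} above the corollary) rests on exactly the same symmetric integral representation of $G^{-1}$, the observation that each $(ta+\tfrac{1}{t}b)^{-1}$ has numerical range in $S_\theta$ by Lemma \ref{inv}, the closed-convex-cone property of $S_\theta$ under the integral, and one final application of Lemma \ref{inv}.
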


\begin{lemma} \label{istyp}   Let $a$ and $b$ be accretive operators
on a Hilbert space $H$
with $a$ strictly accretive.    Or, let $a$ and $b$ be accretive elements of a Banach algebra
with $a$ invertible and $b$ strictly accretive.  
 Then $a^{-\frac{1}{2}} b a^{-\frac{1}{2}}$ is
of type $M$.   
\end{lemma}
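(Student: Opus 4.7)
The plan is to use the identity
$$(t\,1 + a^{-\frac{1}{2}}ba^{-\frac{1}{2}})^{-1} = a^{\frac{1}{2}}(ta+b)^{-1}a^{\frac{1}{2}}, \qquad t>0,$$
which follows from $t\,1+a^{-\frac{1}{2}}ba^{-\frac{1}{2}} = a^{-\frac{1}{2}}(ta+b)a^{-\frac{1}{2}}$ together with the fact that $a^{\pm\frac{1}{2}} \in \{a\}''$, to reduce the type $M$ estimate to showing that $t\Vert(ta+b)^{-1}\Vert$ is bounded as $t$ ranges over $(0,\infty)$. In both hypotheses of the lemma, $a$ is invertible with no strictly negative numbers in its spectrum, so the principal roots $a^{\pm \frac{1}{2}}$ are well defined.

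For the Hilbert space case, I would exploit the inner product description. Strict accretivity of $a$ together with compactness of $W(a)$ gives $\varepsilon>0$ with $a+a^*\ge 2\varepsilon I$, while accretivity of $b$ gives $b+b^*\ge 0$. Adding, $(ta+b)+(ta+b)^* \ge 2t\varepsilon I$ for every $t>0$, hence $\Vert(ta+b)^{-1}\Vert\le 1/(t\varepsilon)$, and the factorization yields the type $M$ bound with constant $M=\Vert a^{\frac{1}{2}}\Vert^2/\varepsilon$.

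For the Banach algebra case no such concrete bound is at hand, so I would argue by continuity and endpoint analysis, which is where the main (if mild) obstacle lies. Using the standard inclusion ${\rm Sp}(y)\subset W(y)$, strict accretivity of $b$ forces $b$ to be invertible; more generally $W(ta+b)\subset t\overline{\Hdb}+\Hdb\subset \Hdb$ for every $t\ge 0$, so ${\rm Sp}(ta+b)\subset \Hdb$ and $ta+b$ is invertible throughout $[0,\infty)$. Continuity of inversion on the invertibles then makes $g(t):=t\Vert(ta+b)^{-1}\Vert$ continuous on $(0,\infty)$, with $g(t)\to 0$ as $t\to 0^+$ (since $(ta+b)^{-1}\to b^{-1}$) and $g(t)=\Vert(a+t^{-1}b)^{-1}\Vert\to \Vert a^{-1}\Vert$ as $t\to\infty$ (using invertibility of $a$). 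These two limits together with continuity give a uniform bound for $g$ on $(0,\infty)$, whence $a^{-\frac{1}{2}}ba^{-\frac{1}{2}}$ is type $M$ via the factorization. The real content here is recognizing that strict accretivity in a Banach algebra already produces invertibility (something one might otherwise have expected to need Hilbert space structure); once that is in hand, everything else is routine topology on the group of invertibles.
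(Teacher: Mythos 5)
Your proof is correct and follows essentially the same route as the paper's: the Hilbert space half is the identical coercivity estimate $\mathrm{Re}\,\langle (ta+b)\zeta,\zeta\rangle \geq t\epsilon\Vert\zeta\Vert^2$ combined with the factorization $(t1+a^{-\frac{1}{2}}ba^{-\frac{1}{2}})^{-1}=a^{\frac{1}{2}}(ta+b)^{-1}a^{\frac{1}{2}}$, and the Banach algebra half rests on the same observation that $W(ta+b)\subset \Hdb$ forces invertibility of $ta+b$ for all $t\geq 0$. The only cosmetic difference is that the paper then invokes its earlier general fact that an invertible element with no strictly negative spectrum is type $M$ (itself proved by the same continuity-plus-endpoint-limits argument), whereas you rerun that argument directly on $t\Vert(ta+b)^{-1}\Vert$.
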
   

\begin{proof}  If $a$ is strictly accretive  then there exists $\epsilon > 0$ with $a \geq \epsilon I$.
We have 
$$\Vert (a^{-\frac{1}{2}} b a^{-\frac{1}{2}} + t 1)^{-1} \Vert \leq 
\Vert a^{\frac{1}{2}} \Vert^2 \Vert (b + t a)^{-1} \Vert .$$
For $\zeta \in H$ we have
$$t \epsilon \Vert \zeta \Vert^2 \leq t \, {\rm Re} \langle a \zeta , \zeta \rangle 
\leq {\rm Re} \langle (b + t a) \zeta , \zeta \rangle \leq \Vert (b + t a) \zeta \Vert \,  \Vert   \zeta \Vert .$$
Dividing by $\Vert   \zeta \Vert$ and letting $\zeta = (b + t a)^{-1} \eta$ we obtain 
$$\Vert (b + t a)^{-1} \eta \Vert \leq \frac{1}{t \epsilon}  \, \Vert   \eta \Vert , \qquad \eta \in H.$$
It follows that 
$$\Vert (a^{-\frac{1}{2}} b a^{-\frac{1}{2}} + t 1)^{-1} \Vert \leq 
\Vert a^{\frac{1}{2}} \Vert^2  \frac{1}{t \epsilon}$$ so that 
 $a^{-\frac{1}{2}} b a^{-\frac{1}{2}}$ is
of type $M$.

If $a$ and $b$ are accretive elements of a Banach algebra
with $a$ invertible and $b$ strictly accretive, and if $t \geq 0$,  then 
$${\rm Sp}(ta+b) \subset W(ta+b) = \{  \varphi(ta+b) : \varphi\in
S(A)\}=\{t\varphi(a)+\varphi(b) : \varphi\in S(A)\}\subset \Hdb.$$
Thus  $ta+b$ is invertible, so that  $t 1 + a^{-\frac{1}{2}} b a^{-\frac{1}{2}}$
 is invertible and $-t \notin {\rm Sp}(a^{-\frac{1}{2}} b a^{-\frac{1}{2}})$.
Hence by the discussion  above Lemma  \ref{unsqr}, 
$a^{-\frac{1}{2}} b a^{-\frac{1}{2}}$ is type $M$. 
\end{proof}  

\begin{remark}   If  $a$ and $b$ are    strictly accretive it need not follow that 
 $W(a^{-\frac{1}{2}} b a^{-\frac{1}{2}})$ contains no negative numbers.   For example, 
let $a^{-1} = b$ be the $2 \times 2$ matrix with rows $[1 \; \, 1]$ and $[-2 \; \, \frac{1}{3}]$.
\end{remark} 

We define $a \# b = a^{\frac{1}{2}} (a^{-\frac{1}{2}} b a^{-\frac{1}{2}})^{\frac{1}{2}} a^{\frac{1}{2}}$
if $a$ is strictly accretive and $b$ is accretive, or
if $a$ invertible and accretive and  $b$ is strictly accretive.    If $a$ and $b$ are simply accretive, and $a$ is invertible, then we define $a  \# b = \lim_{\epsilon \to 0^+} \, a \# (b + \epsilon 1)$, as in the final assertion of the next result.   

\begin{corollary} \label{accr}  Let $a$ and $b$ be accretive elements in
a unital operator algebra with $a$ strictly accretive; or 
with $a$ invertible and $b$ strictly accretive.   Then 
$a \# b$
is  accretive too.    Indeed its numerical range is again  in  $S_\theta$ if 
$W(a)$ and $W(b)$ are inside $S_\theta$ for some $\theta \leq \frac{\pi}{2}$.   
This is also true if $a$ and $b$ are simply accretive, and $a$ is invertible, if we 
define $a  \# b = \lim_{\epsilon \to 0^+} \, a \# (b + \epsilon 1)$.    The latter 
limit exists and is accretive, and is a solution to the equation 
$z a^{-1} z = b$.  \end{corollary}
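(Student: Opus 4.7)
My plan is to treat the three assertions of the corollary in sequence. For the first two (accretivity and the sector containment $W(a\#b)\subset S_\theta$), I handle the two hypothesis variants separately. In the case $a$ invertible accretive and $b$ strictly accretive, Drury's integral representation from the proof of Theorem~\ref{straccr} applies verbatim:
$$(a\#b)^{-1} = \frac{2}{\pi}\int_0^\infty \left(ta + \tfrac{1}{t}b\right)^{-1}\frac{dt}{t}.$$
For each $t>0$, $ta + (1/t)b$ is strictly accretive (a sum of an accretive and a strictly accretive element) with numerical range in $S_\theta$ because $S_\theta$ is a closed convex cone for $\theta\le\pi/2$, so by Lemma~\ref{inv} its inverse also has numerical range in $S_\theta$. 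Pairing with any state and integrating against the positive measure $dt/t$, and using that $S_\theta$ is closed under positive integration, one obtains $W((a\#b)^{-1})\subset S_\theta$; a second application of Lemma~\ref{inv} yields $W(a\#b)\subset S_\theta$.

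In the complementary case ($a$ strictly accretive, $b$ merely accretive), $b$ may fail to be invertible so the preceding integral formula does not apply directly; I would instead argue by approximation. Set $G_\eta = a\#(b+\eta 1)$ for $\eta>0$; by the previous paragraph, $W(G_\eta)\subset S_\theta$. Writing $G_\eta = a^{1/2}y_\eta^{1/2}a^{1/2}$ with $y_\eta = a^{-1/2}(b+\eta 1)a^{-1/2}$, the strict accretivity of $a$ gives $a+a^*\ge\delta 1$ for some $\delta>0$, whence the uniform resolvent bound $\Vert(sa + b + \eta 1)^{-1}\Vert\le 2/(s\delta)$ for all $s,\eta>0$. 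This yields a uniform type-$M$ bound for the family $\{y_\eta\}$, so Lemma~\ref{unsqr} applies with a constant independent of $\eta$ and gives $y_\eta^{1/2}\to y_0^{1/2}$ in norm; hence $G_\eta\to a\#b$ in norm, and the sector containment $W(a\#b)\subset S_\theta$ follows from closedness of $S_\theta$.

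For the third assertion (the limit definition when $a$ is invertible accretive and $b$ is merely accretive), set $G_\epsilon = a\#(b+\epsilon 1)$, which is defined by Theorem~\ref{straccr} and accretive by the first part. Writing $G_\epsilon = a^{1/2}y_\epsilon^{1/2}a^{1/2}$ with $y_\epsilon = a^{-1/2}(b+\epsilon 1)a^{-1/2}$, each $y_\epsilon$ is of type $M$ by Lemma~\ref{istyp}, and a direct computation gives $y_{\epsilon_1} - y_{\epsilon_2} = (\epsilon_1 - \epsilon_2)a^{-1}$. Lemma~\ref{unsqr} then yields
$$\Vert y_{\epsilon_1}^{1/2} - y_{\epsilon_2}^{1/2}\Vert \le K\,|\epsilon_1 - \epsilon_2|^{1/2}\Vert a^{-1}\Vert^{1/2},$$
so $\{y_\epsilon^{1/2}\}$ is Cauchy and $G_\epsilon$ converges in norm to some $G$. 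Then $G$ is accretive as a norm limit of accretives, and passing to the limit in the identity $G_\epsilon a^{-1}G_\epsilon = b+\epsilon 1$ from Theorem~\ref{straccr} gives $Ga^{-1}G = b$.

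The hard part will be the uniformity of the constant $K$ in Lemma~\ref{unsqr} as $\epsilon_1,\epsilon_2\to 0^+$ in the third step: because $a$ is only invertible accretive (not strictly), the type-$M$ bound for $y_\epsilon$ can a priori deteriorate, unlike in the strictly accretive case treated above. One would need to either track the dependence of the Moser--Prodi-type constant on the type-$M$ bounds and show it is controlled on the family $\{y_\epsilon\}_{0<\epsilon\le 1}$, or else bypass Lemma~\ref{unsqr} by a direct estimate using the identity $(t + y_\epsilon)^{-1} = a^{1/2}(ta + b + \epsilon 1)^{-1}a^{1/2}$ inside a Balakrishnan-type integral formula for $y_\epsilon^{1/2}$, combined with accretivity-based resolvent estimates for $ta + b + \epsilon 1$ in the operator algebra.
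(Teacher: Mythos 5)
Your handling of the first two assertions is correct. For the case where $a$ is invertible accretive and $b$ is strictly accretive you argue directly from the representation $(a\# b)^{-1}=\frac{2}{\pi}\int_0^\infty (ta+\frac1t b)^{-1}\frac{dt}{t}$ (legitimate here, since $a^{-\frac12}ba^{-\frac12}$ is invertible with no negative spectrum, so Lemma \ref{Rieszroot} applies), together with Lemma \ref{inv} and the fact that $S_\theta$ is a closed convex cone. The paper instead treats this case by replacing $a$ with $a+\epsilon 1$, citing Corollary \ref{numraco} for the perturbed mean, and passing to the limit by a three-term telescoping estimate controlled by Lemmas \ref{unsqr} and \ref{istyp}; your route is more direct. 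For the case $a$ strictly accretive and $b$ accretive your argument coincides with the paper's, and your explicit check that the type-$M$ constants of $y_\eta=a^{-\frac12}(b+\eta 1)a^{-\frac12}$ are uniform in $\eta$ (via $a+a^*\ge\delta 1$) is precisely what makes Lemma \ref{unsqr} usable with a single constant; the paper leaves this implicit.

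The difficulty you flag in the third assertion is not a loose end you failed to tie up: it is a genuine obstruction, and the assertion as stated is false. The paper's own proof of this part is exactly the Cauchy-net argument you outline, and it silently assumes the uniformity of the constant in Lemma \ref{unsqr} over the family $\{y_\epsilon\}$; as you suspect, that constant depends on the type-$M$ bounds, and these blow up when $a$ is only invertible accretive. Concretely, take $a=\mathrm{diag}(i,-i)$ and $b=\left[\begin{array}{cc} 1 & 1 \\ 1 & 1\end{array}\right]$ in $M_2$: both are accretive and $a$ is invertible. Then $y_\epsilon=a^{-\frac12}(b+\epsilon 1)a^{-\frac12}=\left[\begin{array}{cc} -i(1+\epsilon) & 1 \\ 1 & i(1+\epsilon)\end{array}\right]$ has spectrum $\{\pm i\sqrt{2\epsilon+\epsilon^2}\}$ (so $a\#(b+\epsilon 1)$ is well defined) and converges to the nonzero nilpotent $y_0$ with $y_0^2=0$. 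Since $y_0$ has no square root, $\Vert y_\epsilon^{\frac12}\Vert\to\infty$ (a bounded subsequence would yield a square root of $y_0$ in the limit), and as $a^{\frac12}$ is a diagonal unitary, $\Vert a\#(b+\epsilon 1)\Vert=\Vert y_\epsilon^{\frac12}\Vert\to\infty$: the limit defining $a\#b$ does not exist. Moreover $za^{-1}z=b$ has no solution at all here, since any solution $z$ would make $a^{-\frac12}za^{-\frac12}$ a square root of $y_0$. So neither of the repairs you propose (tracking the Macaev--Palant constant, or a direct Balakrishnan-type estimate) can succeed without an additional hypothesis -- for instance strict accretivity of $a$, which, as your second paragraph shows, is exactly what furnishes the uniform type-$M$ bound.
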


\begin{proof}  If $a$ is strictly accretive apply the theorem (or its proof) with $b$ replaced by $b + \epsilon$, and let $\epsilon \to 0^+$,
using Lemmas \ref{unsqr} and \ref{istyp}.    
This allows one to see that $$
\Vert (a^{-\frac{1}{2}} (b + \epsilon 1) a^{-\frac{1}{2}})^{\frac{1}{2}} - 
 (a^{-\frac{1}{2}} b a^{-\frac{1}{2}})^{\frac{1}{2}}
\Vert \leq K \Vert \epsilon a^{-1} \Vert^{\frac{1}{2}} \to 0$$ as $\epsilon \to 0$. 
So $a \# (b + \epsilon 1) \to a^{\frac{1}{2}} (a^{-\frac{1}{2}} b a^{-\frac{1}{2}})^{\frac{1}{2}} a^{\frac{1}{2}}$
as $\epsilon \to 0$.   Since $W(b+ \epsilon 1)  \subset S_\theta$ if 
$W(b) \subset S_\theta$ the last assertion follows easily from Corollary \ref{numraco}.

Suppose that $a$ is invertible and $b$ strictly accretive.    By the above  
$(a + \epsilon 1) \# b$ is accretive with numerical range  in  $S_\theta$ if 
$W(a)$ and $W(b)$ are inside $S_\theta$.   If $z_\epsilon = 
((a+ \epsilon 1)^{-\frac{1}{2}} b (a+ \epsilon 1)^{-\frac{1}{2}})^{\frac{1}{2}}$ we
have $(a + \epsilon 1) \# b - a^{\frac{1}{2}} (a^{-\frac{1}{2}} b a^{-\frac{1}{2}})^{\frac{1}{2}} a^{\frac{1}{2}}$
equal to $$((a + \epsilon 1)^{\frac{1}{2}} - a^{\frac{1}{2}}) \, z_\epsilon \, (a + \epsilon 1)^{\frac{1}{2}} 
+ a^{\frac{1}{2}} (z_\epsilon - z_0) (a + \epsilon 1)^{\frac{1}{2}} +
a^{\frac{1}{2}} z_0 \, ((a + \epsilon 1)^{\frac{1}{2}} - a^{\frac{1}{2}}). $$
The last of these three terms has limit $0$.   The middle term  has limit $0$ too since by  Lemmas \ref{unsqr} and \ref{istyp}, 
$$\Vert z_\epsilon - z_0 \Vert \leq K \Vert (a+ \epsilon 1)^{-\frac{1}{2}} b (a+ \epsilon 1)^{-\frac{1}{2}}
- a^{-\frac{1}{2}} b a^{-\frac{1}{2}} \Vert^{\frac{1}{2}} \to 0$$ as $\epsilon \to 0$.   This also uses the 
fact that $(a + \epsilon 1)^{\frac{1}{2}} \to a^{\frac{1}{2}}$, and the continuity of the inverse.
We deduce that $\Vert z_\epsilon \Vert$ is bounded independently of $\epsilon$,  and so
the first of the three terms has limit $0$ too. 
Thus $(a + \epsilon 1) \# b \to a^{\frac{1}{2}} (a^{-\frac{1}{2}} b a^{-\frac{1}{2}})^{\frac{1}{2}} a^{\frac{1}{2}}$
as $\epsilon \to 0$.  Hence the latter is accretive with numerical range  in  $S_\theta$, since the 
former has these properties.

Finally, if $a$ and $b$ are simply accretive, and $a$ is invertible, 
then  $a^{-\frac{1}{2}} (b + \epsilon 1) a^{-\frac{1}{2}}$ is type $M$ 
by Lemma \ref{istyp},  we have by Lemma \ref{unsqr} that 
$$\Vert  a \# (b + \epsilon_1 1) - a \# (b + \epsilon_2 1) \Vert
\leq K \Vert a^{\frac{1}{2}} \Vert^2 \Vert a^{-\frac{1}{2}}(\epsilon_1 - \epsilon_2)
a^{-\frac{1}{2}} \Vert^{\frac{1}{2}}.$$
Thus $(a \# (b + \epsilon) 1))$ is a Cauchy net, hence convergent to an accretive element $z$
such that, as above,   $W(z) \subset S_\theta$ if 
$W(a)$ and $W(b)$ are inside $S_\theta$.    It is an exercise that it is a solution to the equation 
$z a^{-1} z = b$. 
\end{proof}

\begin{remark}   In the setting of  Corollary \ref{accr}, the same proof and Corollary \ref{numraoffc} show that if in addition the arguments of numbers in $W(a)$ and $W(b)$ are inside
$[\alpha, \beta]$ for $-\frac{\pi}{2} \leq \alpha \leq \beta \leq  \frac{\pi}{2}$,  then the same is true for $W(a \# b)$.

We also remark that solutions to $z a^{-1} z = b$ are not unique without the earlier strictly accretive hypothesis.
(for example let $a = {\rm diag}(i,2), b = {\rm diag}(i,1/2)$).  However for any solution $z$ to this equation, 
$a^{-\frac{1}{2}} z a^{-\frac{1}{2}}$ is a square root of $a^{-\frac{1}{2}} b a^{-\frac{1}{2}}$.
\end{remark}

\begin{corollary} \label{accr2}  Let $a$ and $b$ be accretive invertible elements in
a unital operator algebra.
 Then with the definitions above, $a \# b = b \# a$. 
  \end{corollary}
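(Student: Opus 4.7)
The plan is to approximate $a$ and $b$ by strictly accretive elements and invoke the symmetry in Theorem \ref{straccr}. For each $\epsilon>0$, both $a_\epsilon:=a+\epsilon 1$ and $b_\epsilon:=b+\epsilon 1$ are strictly accretive, so Theorem \ref{straccr} gives
\[
a_\epsilon\#b_\epsilon \;=\; b_\epsilon\#a_\epsilon \qquad(\epsilon>0).
\]
The corollary will follow once we show that as $\epsilon\to 0^+$ the left side converges in norm to $a\#b$ and (by the symmetric argument with $a,b$ swapped) the right side to $b\#a$, so it suffices to prove the first convergence.

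A useful intermediate identity is $a\#(b+\nu 1)=(b+\nu 1)\#a$ for every $\nu>0$. To see this, let $\mu\to 0^+$ in the strict-strict symmetry $(a+\mu 1)\#(b+\nu 1)=(b+\nu 1)\#(a+\mu 1)$: the left side converges to $a\#(b+\nu 1)$ by the second case of Corollary \ref{accr} (with $a+\mu 1\to a$ invertible accretive and $b+\nu 1$ fixed strictly accretive), while the right side converges to $(b+\nu 1)\#a$ by the first case of Corollary \ref{accr}. Combined with the third-case definition $a\#b=\lim_{\nu\to 0^+}a\#(b+\nu 1)$, this gives $a\#b=\lim_{\nu\to 0^+}(b+\nu 1)\#a$. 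To establish the diagonal convergence $a_\epsilon\#b_\epsilon\to a\#b$, I would estimate
\[
\|a_\epsilon\#b_\epsilon-a\#b\| \;\le\; \|a_\epsilon\#b_\epsilon-a\#b_\epsilon\| + \|a\#b_\epsilon-a\#b\|,
\]
where the second term vanishes in the limit by the definition of $a\#b$. For the first term one repeats the three-term decomposition from the proof of Corollary \ref{accr}'s second case, now with the varying strictly accretive element $b_\epsilon$ in place of a fixed $b'$: each piece is controlled either by $\|a_\epsilon^{\pm 1/2}-a^{\pm 1/2}\|\to 0$ or by Lemma \ref{unsqr} applied to the type-$M$ elements $a_\epsilon^{-1/2}b_\epsilon a_\epsilon^{-1/2}$ and $a^{-1/2}b_\epsilon a^{-1/2}$ (both type $M$ by Lemma \ref{istyp}).

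The main technical obstacle is to keep this estimate bounded uniformly as $\epsilon\to 0^+$: the Matsaev--Palant constant in Lemma \ref{unsqr} can depend on the type-$M$ constants of the two arguments, and the bound in the proof of Lemma \ref{istyp} carries factors like $1/\epsilon$ which may blow up. A safer fallback, which I would use if the direct estimate fails, is to work with the integral representation from the proof of Theorem \ref{straccr},
\[
(a_\epsilon\#b_\epsilon)^{-1} \;=\; \frac{2}{\pi}\int_0^\infty\bigl(ta+b/t+\epsilon(t+1/t)\,1\bigr)^{-1}\,\frac{dt}{t},
\]
and compare it with $(a\#(b+\nu 1))^{-1}=\frac{2}{\pi}\int_0^\infty(ta+b/t+(\nu/t)\,1)^{-1}\,dt/t$ (valid by Lemma \ref{Rieszroot}, since the relevant element is type $M$ and invertible by Lemma \ref{istyp}). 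A resolvent-identity/dominated-convergence argument using the accretivity bound $\|(x+s\,1)^{-1}\|\le 1/s$, together with decay of $\|(ta+b/t)^{-1}\|$ as $t\to 0$ and $t\to\infty$ from the invertibility of $a$ and $b$, then pins down a common limit for the two integrals as $\epsilon,\nu\to 0^+$, yielding $a\#b=b\#a$.
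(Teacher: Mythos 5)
You reproduce the first half of the paper's argument exactly: letting $\mu\to 0^+$ in the strict--strict identity $(a+\mu 1)\#(b+\nu 1)=(b+\nu 1)\#(a+\mu 1)$ and invoking the two convergence facts from Corollary \ref{accr} does give $a\#(b+\nu 1)=(b+\nu 1)\# a$, hence $a\# b=\lim_{\nu\to 0^+}(b+\nu 1)\# a$. But at this point the proof is not finished, and the remaining step is exactly what your write-up omits: one must identify $\lim_{\nu\to 0^+}(b+\nu 1)\# a$ with $b\# a=\lim_{\nu\to 0^+}b\#(a+\nu 1)$, and these are two \emph{different} families of regularizations (the $\nu$ sits on opposite arguments). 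The paper closes this by writing $(b+\nu 1)\# a=(b+\nu 1)^{\frac{1}{2}}\bigl((b+\nu 1)^{-\frac{1}{2}}a(b+\nu 1)^{-\frac{1}{2}}\bigr)^{\frac{1}{2}}(b+\nu 1)^{\frac{1}{2}}$, replacing the outer factors by $b^{\frac{1}{2}}$ (the middle factor staying bounded), and then comparing the inner square root with $\bigl(b^{-\frac{1}{2}}(a+\nu 1)b^{-\frac{1}{2}}\bigr)^{\frac{1}{2}}$: both arguments are type $M$ by Lemma \ref{istyp}, their difference tends to $0$ in norm, so Lemma \ref{unsqr} forces the square roots to coalesce, and the resulting expression is precisely $b\#(a+\nu 1)$. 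That single exchange of which argument carries the $\nu$ is the entire content of the second half of the proof.

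What you propose instead does not close this gap. The diagonal convergence $a_\epsilon\# b_\epsilon\to a\# b$ is strictly harder than what is needed, and you yourself flag that the estimate may fail to be uniform in $\epsilon$. The integral-representation fallback breaks down at the level of generality of this corollary: for merely accretive invertible $a,b$ the pencil $ta+\frac{1}{t}b$ can be singular at interior values of $t$ (take $a=i1$, $b=-i1$ and $t=1$), so the unregularized integral $\int_0^\infty (ta+\frac{1}{t}b)^{-1}\,\frac{dt}{t}$ need not exist, no dominated-convergence argument is available, and indeed $a^{-\frac{1}{2}}ba^{-\frac{1}{2}}$ itself may fail to have a principal square root in this setting (this is the paper's remark immediately after the corollary). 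So your proposal correctly establishes the case where one of $a,b$ is strictly accretive, but leaves the general case with a genuine hole; the repair is the Lemma \ref{istyp}/Lemma \ref{unsqr} swap described above.
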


\begin{proof}    First assume that $b$ is strictly accretive.
By the last proof,
we have
$$a^{\frac{1}{2}} (a^{-\frac{1}{2}} b a^{-\frac{1}{2}})^{\frac{1}{2}} a^{\frac{1}{2}} =
 \lim_{\epsilon \to 0^+} \, (a + \epsilon 1) \# b = \lim_{\epsilon \to 0^+} \, b \# (a + \epsilon 1)
=  b^{\frac{1}{2}} (b^{-\frac{1}{2}} a b^{-\frac{1}{2}})^{\frac{1}{2}} b^{\frac{1}{2}},$$
the last equality 
by a fact in the proof of Corollary \ref{accr}.

If $b$ is not strictly accretive, we have by the above that $a \# b$ equals 
$$\lim_{\epsilon \to 0^+} \, a^{\frac{1}{2}} (a^{-\frac{1}{2}} (b + \epsilon 1)
a^{-\frac{1}{2}})^{\frac{1}{2}} a^{\frac{1}{2}} 
=  \lim_{\epsilon \to 0^+} \, (b + \epsilon 1)^{\frac{1}{2}} ((b + \epsilon 1)^{-\frac{1}{2}}
a (b + \epsilon 1)^{-\frac{1}{2}})^{\frac{1}{2}} (b + \epsilon 1)^{\frac{1}{2}}.$$
This may be rewritten as $\lim_{\epsilon \to 0^+} \, b^{\frac{1}{2}} ((b + \epsilon 1)^{-\frac{1}{2}}
a (b + \epsilon 1)^{-\frac{1}{2}})^{\frac{1}{2}} b^{\frac{1}{2}}.$   Now both
$(b + \epsilon 1)^{-\frac{1}{2}}
a (b + \epsilon 1)^{-\frac{1}{2}}$ and $b^{-\frac{1}{2}}
(a+ \epsilon 1) b^{-\frac{1}{2}}$ are type $M$ by Lemma \ref{istyp}, 
so the norm of the difference of the square roots of these two products is dominated 
by Lemma   \ref{unsqr} by
$$ 
K \Vert (b + \epsilon 1)^{-\frac{1}{2}}
a (b + \epsilon 1)^{-\frac{1}{2}} - b^{-\frac{1}{2}}
(a+ \epsilon 1) b^{-\frac{1}{2}} \Vert^{\frac{1}{2}} \to 0$$
as $\epsilon \to 0$.   Thus we see
$$a \# b = \lim_{\epsilon \to 0^+} \, b^{\frac{1}{2}} (b^{-\frac{1}{2}}
(a+ \epsilon 1) b^{-\frac{1}{2}})^{\frac{1}{2}} b^{\frac{1}{2}} = b \# a$$
as desired.
\end{proof}   

 We remark that  in the case that
 both $a$ and $b$ are simply accretive and $a$ is invertible, $a^{\frac{1}{2}} (a^{-\frac{1}{2}} b a^{-\frac{1}{2}})^{\frac{1}{2}} a^{\frac{1}{2}}$ may not make sense.  For example if $b = a^{-1} = i$ in $\Cdb$ 
then $a^{-\frac{1}{2}} b a^{-\frac{1}{2}} = b^2 = -1$, whose principal root is undefined.

\begin{lemma} \label{cocas}  If $a$ and $b$ are accretive elements in
a unital operator algebra such that $a$ and $b$ commute, 
and if $a$ is 
invertible, 
then $a \# b
= a^{\frac{1}{2}} b^{\frac{1}{2}}$, and this is accretive.
\end{lemma}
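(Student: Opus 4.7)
The plan is to unfold the definition of $a \# b$, use the pairwise commutativity of $a^{\pm \frac{1}{2}}$ and $b^{\frac{1}{2}}$ to identify $(a^{-\frac{1}{2}} b a^{-\frac{1}{2}})^{\frac{1}{2}}$ with $a^{-\frac{1}{2}} b^{\frac{1}{2}}$, and then cancel $a^{\frac{1}{2}} a^{-\frac{1}{2}} = 1$.

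First I would observe that, because $a$ and $b$ commute and $a$ is invertible, Theorem~\ref{lrs} places $a^{\pm \frac{1}{2}}$ and $b^{\frac{1}{2}}$ in the closed subalgebras generated by $a$, $a^{-1}$, or $b$, and so they commute pairwise with $a$, with $b$, and with each other. In particular $a^{-\frac{1}{2}} b a^{-\frac{1}{2}} = a^{-1} b$ and $(a^{-\frac{1}{2}} b^{\frac{1}{2}})^2 = a^{-1} b$.

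Next I would handle the main case, in which the formula $a \# b = a^{\frac{1}{2}}(a^{-\frac{1}{2}} b a^{-\frac{1}{2}})^{\frac{1}{2}} a^{\frac{1}{2}}$ applies directly---that is, when either $a$ or $b$ is strictly accretive. Lemma~\ref{istyp} shows that $a^{-1} b$ is type $M$, hence has a unique principal square root with spectrum in $S_{\frac{\pi}{2}}$. Since $a^{-1}$ is accretive by Lemma~\ref{inv}, the spectral mapping theorem places $\Spe(a^{-\frac{1}{2}})$ and $\Spe(b^{\frac{1}{2}})$ inside $S_{\frac{\pi}{4}}$. The standard inclusion $\Spe(xy) \subset \Spe(x)\cdot \Spe(y)$ for commuting elements of a Banach algebra then gives $\Spe(a^{-\frac{1}{2}} b^{\frac{1}{2}}) \subset S_{\frac{\pi}{4}} \cdot S_{\frac{\pi}{4}} = S_{\frac{\pi}{2}}$, so by uniqueness $a^{-\frac{1}{2}} b^{\frac{1}{2}} = (a^{-1} b)^{\frac{1}{2}}$. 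Plugging this into the formula for $a \# b$ and using the commutativity from the first step yields $a \# b = a^{\frac{1}{2}} b^{\frac{1}{2}}$.

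For the remaining case, where neither $a$ nor $b$ is strictly accretive, I would use the definition $a \# b = \lim_{\epsilon \to 0^+} a \# (b + \epsilon 1)$; the previous step (applied to $a$ and the strictly accretive $b + \epsilon 1$, which still commutes with $a$) gives $a \# (b + \epsilon 1) = a^{\frac{1}{2}}(b + \epsilon 1)^{\frac{1}{2}}$. Since $b$ and $b + \epsilon 1$ are accretive and hence type $M$, Lemma~\ref{unsqr} shows $(b + \epsilon 1)^{\frac{1}{2}} \to b^{\frac{1}{2}}$, giving $a \# b = a^{\frac{1}{2}} b^{\frac{1}{2}}$. Accretivity of the latter is then immediate from Corollary~\ref{accr}. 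I expect the main obstacle to be pinning down that $a^{-\frac{1}{2}} b^{\frac{1}{2}}$ really is the principal square root of $a^{-1} b$, namely the spectral containment $\Spe(a^{-\frac{1}{2}} b^{\frac{1}{2}}) \subset S_{\frac{\pi}{2}}$, which rests on the classical fact that the spectrum of a product of two commuting Banach algebra elements is contained in the product of the spectra.
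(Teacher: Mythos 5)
Your proposal is correct and follows essentially the same route as the paper: both arguments reduce to showing that $a^{-\frac{1}{2}}b^{\frac{1}{2}}$ is the principal square root of $a^{-1}b$ by locating its spectrum in the (closed) right half-plane via commutative Gelfand theory, and then handle the merely accretive case by the $\epsilon$-limit in the definition of $a\#b$ together with Lemma \ref{unsqr}. The only cosmetic difference is that you invoke the inclusion $\Spe(xy)\subset \Spe(x)\Spe(y)$ for commuting elements, whereas the paper applies characters of the commutative subalgebra generated by $a$ and $b$ directly --- the same fact in different packaging.
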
 \begin{proof}   
First suppose that $b$ is also strictly accretive.
Claim: $a^{-\frac{1}{2}} b^{\frac{1}{2}}$ 
has spectrum in ${\mathbb H}$.   Indeed, if $\chi$ is a character
of the unital Banach algebra generated by $a$ and $b$ then
$\chi(a^{-\frac{1}{2}} b^{\frac{1}{2}}) = 
\chi(a)^{-\frac{1}{2}} \chi(b)^{\frac{1}{2}}$,
which is in ${\mathbb H}$.  
By the Claim and the unicity of roots mentioned in the Introduction,
we have $$(a^{-\frac{1}{2}} b a^{-\frac{1}{2}})^{\frac{1}{2}}
= (a^{-1} b)^{\frac{1}{2}} = a^{-\frac{1}{2}} b^{\frac{1}{2}}.$$
Hence $a^{\frac{1}{2}} (a^{-\frac{1}{2}} b a^{-\frac{1}{2}})^{\frac{1}{2}} a^{\frac{1}{2}}
= a^{\frac{1}{2}} b^{\frac{1}{2}}$.

More generally, $a \# b = \lim_{\epsilon \to 0^+} \, a \# (b+\epsilon 1) = \lim_{\epsilon \to 0^+} \, 
a^{\frac{1}{2}} (b+ \epsilon 1)^{\frac{1}{2}} = a^{\frac{1}{2}} b^{\frac{1}{2}}$.
\end{proof}

Similarly,  if $a$ and $b$ are any accretive elements in
a unital operator algebra such that $a$ and $b$ commute, 
then $ \lim_{\epsilon \to 0^+} \, (a+ \epsilon) \# b
= a^{\frac{1}{2}} b^{\frac{1}{2}}$, and this is accretive.  This shows that
 Corollary \ref{accr} is in some sense a noncommutative variant  
of the fact from \cite{BBS} that 
$a^{\frac{1}{2}} b^{\frac{1}{2}}$ is accretive for 
accretive commuting elements in a unital operator algebra.
We noted in \cite[Example 3.13]{BOZ} that the latter fact is  
false in a Banach algebra.   Hence none of the results 
above in this section are true for general Banach algebras.

It is easy to show that for positive scalars $s, t$ we have $(s a) \# (tb) = \sqrt{st} \, (a \# b)$
for $a, b$ accretive and $a$ invertible.   

\begin{proposition}   \label{invsg}   Let $a$ and $b$ be accretive invertible elements in
a unital operator algebra. 
 Then with the definitions above, $(a \# b)^{-1} = a^{-1} \, \# \, b^{-1}$.
\end{proposition}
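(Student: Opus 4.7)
The plan is to prove the identity first when $a$ and $b$ are both strictly accretive (where Theorem \ref{straccr} applies directly) and then extend to the invertible accretive case by the usual $\epsilon$-perturbation argument used in Corollary \ref{accr}.

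For strictly accretive $a,b$, I would give a direct computation. Write out the formulas
$$a \# b = a^{\frac{1}{2}} (a^{-\frac{1}{2}} b a^{-\frac{1}{2}})^{\frac{1}{2}} a^{\frac{1}{2}}, \qquad a^{-1} \# b^{-1} = a^{-\frac{1}{2}} (a^{\frac{1}{2}} b^{-1} a^{\frac{1}{2}})^{\frac{1}{2}} a^{-\frac{1}{2}}.$$
The key algebraic identity is $(a^{-\frac{1}{2}} b a^{-\frac{1}{2}})^{-1} = a^{\frac{1}{2}} b^{-1} a^{\frac{1}{2}}$; since the spectrum of $a^{-\frac{1}{2}} b a^{-\frac{1}{2}}$ contains no nonpositive reals (as shown at the start of the proof of Theorem \ref{straccr}), the remark in the introduction that $(x^{-1})^{\frac{1}{2}} = (x^{\frac{1}{2}})^{-1}$ for principal roots gives $(a^{\frac{1}{2}} b^{-1} a^{\frac{1}{2}})^{\frac{1}{2}} = ((a^{-\frac{1}{2}} b a^{-\frac{1}{2}})^{\frac{1}{2}})^{-1}$. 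Multiplying out, the inner $a^{\frac{1}{2}} a^{-\frac{1}{2}}$ collapses to $1$ and the two square roots cancel, leaving $(a\#b)(a^{-1}\#b^{-1}) = a^{\frac{1}{2}} \cdot 1 \cdot a^{-\frac{1}{2}} = 1$; the other order is symmetric. As an aside, one could alternatively deduce this from the uniqueness of strictly accretive solutions in Theorem \ref{straccr}: taking inverses of $G a^{-1} G = b$ for $G = a\#b$ yields $G^{-1} a G^{-1} = b^{-1}$, and by Lemma \ref{inv} $G^{-1}$ is strictly accretive, so $G^{-1}$ must coincide with the unique strictly accretive solution $a^{-1} \# b^{-1}$ of $y(a^{-1})^{-1}y = b^{-1}$.

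For the general case of invertible accretive $a,b$, set $a_\epsilon = a + \epsilon 1$ and $b_\epsilon = b + \epsilon 1$ for $\epsilon > 0$; both are strictly accretive and invertible. The first step applies to give
$$(a_\epsilon \# b_\epsilon)(a_\epsilon^{-1} \# b_\epsilon^{-1}) = 1 = (a_\epsilon^{-1} \# b_\epsilon^{-1})(a_\epsilon \# b_\epsilon).$$
I would then let $\epsilon \to 0^+$, using the convergence machinery already developed in Corollary \ref{accr}: Lemma \ref{istyp} puts the relevant expressions in the type $M$ class, and Lemma \ref{unsqr} controls the difference of square roots by a fractional power of the difference of their arguments. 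Splitting the joint perturbation as $a_\epsilon \# b_\epsilon \to a \# b_\epsilon \to a \# b$ (or first in $a$, then in $b$) reduces each step to a single-variable perturbation of the kind handled in Corollary \ref{accr}. The same limit argument applied to $a^{-1}, b^{-1}$ (which are themselves invertible and accretive by Lemma \ref{inv}) shows $a_\epsilon^{-1} \# b_\epsilon^{-1} \to a^{-1} \# b^{-1}$. Passing to the limit in the centered identity above then gives both $(a\#b)(a^{-1}\#b^{-1}) = 1$ and $(a^{-1}\#b^{-1})(a\#b) = 1$, which simultaneously establishes that $a\#b$ is invertible and identifies its inverse.

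The main technical obstacle is the joint convergence $a_\epsilon \# b_\epsilon \to a\#b$: Corollary \ref{accr} explicitly handles only one-variable perturbations, and one has to make sure that when we decompose the joint limit into two one-variable limits the intermediate object $a \# b_\epsilon$ (with $a$ merely invertible accretive and $b_\epsilon$ strictly accretive) falls under a case that Corollary \ref{accr} treats, and that the estimates from Lemma \ref{unsqr} can be applied with type $M$ constants that stay under control as $\epsilon \to 0$. Once those uniform bounds are in hand the rest is routine.
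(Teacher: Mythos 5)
Your algebraic core is exactly the paper's: the whole proposition reduces to the identity $\bigl((a^{-\frac{1}{2}} b a^{-\frac{1}{2}})^{\frac{1}{2}}\bigr)^{-1} = (a^{\frac{1}{2}} b^{-1} a^{\frac{1}{2}})^{\frac{1}{2}}$, i.e.\ to $(x^{-1})^{\frac{1}{2}} = (x^{\frac{1}{2}})^{-1}$ for principal roots, after which the product telescopes to $1$. The problem is your reduction to the strictly accretive case by perturbing \emph{both} variables, $a_\epsilon = a+\epsilon 1$, $b_\epsilon = b + \epsilon 1$. This creates two difficulties that you flag but do not resolve, and one that you do not flag. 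First, the joint convergence $a_\epsilon \# b_\epsilon \to a \# b$ requires the constant $K$ in Lemma \ref{unsqr} to be controlled uniformly in $\epsilon$; that constant depends on the type $M$ constants of the elements being compared, and for $a_\epsilon^{-\frac{1}{2}} b_\epsilon a_\epsilon^{-\frac{1}{2}}$ these degenerate as $\epsilon \to 0$ (in the Hilbert space part of Lemma \ref{istyp} the bound is of order $\Vert a_\epsilon^{\frac{1}{2}}\Vert^2/(t\epsilon)$). You call this "the main technical obstacle" and then assume it away. Second, and unflagged: $a_\epsilon^{-1} = (a+\epsilon 1)^{-1}$ is \emph{not} of the form $a^{-1} + \delta 1$, so $a_\epsilon^{-1} \# b_\epsilon^{-1}$ is not a term of the net that \emph{defines} $a^{-1}\#b^{-1}$; identifying its limit with $a^{-1}\#b^{-1}$ is an additional comparison of two different approximating nets, again in both variables simultaneously, and again needing uniform constants. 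As written, the general case is not proved.

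The paper's proof avoids all of this by never perturbing $a$. By definition $a\#b = \lim_{\epsilon} a^{\frac{1}{2}} (a^{-\frac{1}{2}}(b+\epsilon 1) a^{-\frac{1}{2}})^{\frac{1}{2}} a^{\frac{1}{2}}$ and $a^{-1}\#b^{-1} = \lim_{\epsilon} a^{-\frac{1}{2}} (a^{\frac{1}{2}}(b^{-1}+\epsilon 1) a^{\frac{1}{2}})^{\frac{1}{2}} a^{-\frac{1}{2}}$. The only estimate needed is that the square roots of $a^{\frac{1}{2}}(b^{-1}+\epsilon 1)a^{\frac{1}{2}}$ and of $a^{\frac{1}{2}}(b+\epsilon 1)^{-1}a^{\frac{1}{2}}$ differ by at most $K \Vert a^{\frac{1}{2}}(b^{-1}+\epsilon 1)a^{\frac{1}{2}} - a^{\frac{1}{2}}(b+\epsilon 1)^{-1}a^{\frac{1}{2}}\Vert^{\frac{1}{2}} \to 0$ (Lemmas \ref{unsqr} and \ref{istyp}), and here the fixed invertible $a$ keeps the constants under control. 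Once $a^{-1}\#b^{-1}$ is rewritten as $\lim_{\epsilon} a^{-\frac{1}{2}} (a^{\frac{1}{2}}(b+\epsilon 1)^{-1} a^{\frac{1}{2}})^{\frac{1}{2}} a^{-\frac{1}{2}}$, its product with the $\epsilon$-approximant of $a\#b$ is identically $1$ by your root identity, for every $\epsilon$, and one just passes to the limit. If you want to salvage your outline, replace the double perturbation by this one-variable scheme; otherwise you must actually prove the uniform bounds you postpone.
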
 \begin{proof}  By Lemma \ref{inv}, $a^{-1}, b^{-1}$ and $(b + \epsilon 1)^{-1}$ are 
accretive.   Hence both $a^{\frac{1}{2}} (b + \epsilon 1)^{-1} a^{\frac{1}{2}}$ and 
$a^{\frac{1}{2}} (b^{-1}+ \epsilon 1) a^{\frac{1}{2}}$ are type $M$ by Lemma \ref{istyp}.
By Lemma   \ref{unsqr}   the norm of the difference of the square roots of these two products is dominated 
by
$$ 
K \Vert a^{\frac{1}{2}} (b + \epsilon 1)^{-1} a^{\frac{1}{2}} \, - \, 
a^{\frac{1}{2}} (b^{-1}+ \epsilon 1) a^{\frac{1}{2}} \Vert^{\frac{1}{2}} \to 0$$
as $\epsilon \to 0$.   Thus we see
$$a^{-1} \,  \# \, b^{-1} = \lim_{\epsilon \to 0^+} \, a^{-\frac{1}{2}} (a^{\frac{1}{2}}
(b+ \epsilon 1)^{-1} a^{\frac{1}{2}})^{\frac{1}{2}} a^{-\frac{1}{2}} .$$
It is easy to see that the product of  this and $a \# b = \lim_{\epsilon \to 0^+} \, a \# (b + \epsilon 1)$
is $$ \lim_{\epsilon \to 0^+} \, a^{-\frac{1}{2}} (a^{\frac{1}{2}}
(b+ \epsilon 1)^{-1} a^{\frac{1}{2}})^{\frac{1}{2}} a^{-\frac{1}{2}} \cdot
a^{\frac{1}{2}} (a^{-\frac{1}{2}}
(b+ \epsilon 1) a^{-\frac{1}{2}})^{\frac{1}{2}} a^{\frac{1}{2}} = 1$$
using the fact mentioned in the introduction that $(x^{-1})^{\frac{1}{2}} = (x^{\frac{1}{2}})^{-1}$.
Similarly, $(a \# b) (a^{-1} \# b^{-1}) = 1$
as desired. 
\end{proof}

\begin{proposition}    Suppose that $a,  c$ are invertible in $B(H)$, and that 
  $a$ and $b$ are accretive in $B(H)$. 
Then $c^{\ast}(a\#b) c=(c^*ac)\#(c^*bc)$.     In particular 
$(c^*bc)^{\frac{1}{2}}$ equals $c^{\ast}( (c c^*)^{-1} \# b) c$.
Also, $(a + b) \# (a^{-1} + b^{-1})^{-1} = a \# b$ if $a, b$ are strictly accretive.
  \end{proposition}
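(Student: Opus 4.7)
The plan is to prove the three identities in turn, each by invoking the defining equation $x a^{-1} x = b$ and the uniqueness clause of Theorem \ref{straccr}, supplemented by the limiting arguments of Corollary \ref{accr} to handle the non-strictly accretive cases.

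For $c^{\ast}(a\#b) c=(c^*ac)\#(c^*bc)$, I would first suppose $a$ and $b$ are both strictly accretive.  Since $c$ is invertible, $c^* c$ is bounded below, so the bound $a + a^* \geq \epsilon 1$ gives $c^*(a+a^*)c \geq \epsilon c^* c \geq \epsilon' 1$ for some $\epsilon' > 0$; thus $c^*ac$ and $c^*bc$ are strictly accretive.  Setting $G = a\#b$, Theorem \ref{straccr} gives $G a^{-1} G = b$, so
\[
(c^* G c)(c^* a c)^{-1}(c^* G c) \; = \; c^* (G a^{-1} G) c \; = \; c^* b c,
\]
and $c^*Gc$ is strictly accretive (as $G$ is and $c$ is invertible).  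The uniqueness in Theorem \ref{straccr} then yields $c^*Gc = (c^*ac)\#(c^*bc)$.  For the general case in which $a$ is invertible and accretive and $b$ is only accretive, I would apply the strict case to $b + \delta 1$ and let $\delta \to 0^+$: the left side converges by the limit definition of $a \# b$ in Corollary \ref{accr}, while the right side equals $(c^*ac)\#(c^*bc + \delta c^*c)$, which converges to $(c^*ac)\#(c^*bc)$ by the same Lemma \ref{istyp} and Lemma \ref{unsqr} machinery used in Corollary \ref{accr}, since $\delta c^*c \to 0$ in norm.

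The second identity follows by specializing the first to $a = (cc^*)^{-1}$, which is strictly accretive by Lemma \ref{inv}.  The algebraic simplification $c^*(cc^*)^{-1}c = c^*(c^*)^{-1}c^{-1}c = 1$ collapses the right side to $1 \# (c^*bc)$, and Lemma \ref{cocas} with commuting operators $1$ and $c^*bc$ gives $1 \# (c^*bc) = (c^*bc)^{1/2}$.  For the third identity, with $a, b$ strictly accretive let $G = a \# b$; by Theorem \ref{straccr} (and its symmetric formula) we have both $G a^{-1} G = b$ and $G b^{-1} G = a$.  Adding, $G(a^{-1} + b^{-1}) G = a + b$, so taking inverses on both sides yields $G(a+b)^{-1} G = (a^{-1} + b^{-1})^{-1}$.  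Since $a+b$ is strictly accretive, $(a^{-1}+b^{-1})^{-1}$ is strictly accretive by Lemma \ref{inv}, and $G$ itself is strictly accretive, the uniqueness clause of Theorem \ref{straccr} identifies $G = (a+b)\#(a^{-1}+b^{-1})^{-1}$.

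The main obstacle I anticipate is the continuity step in the first identity, where I want to pass from $\delta > 0$ to $\delta = 0$ when $b$ is only accretive: the natural perturbation on the right-hand side is $c^*bc + \delta c^*c$, not the $c^*bc + \delta 1$ that the definition of the geometric mean in Corollary \ref{accr} uses directly.  Resolving this amounts to verifying that the continuity arguments of Corollary \ref{accr} go through with any positive perturbation that tends to $0$ in norm, which in turn reduces to Lemma \ref{istyp} (giving the type $M$ property of $(c^*ac)^{-1/2}(c^*bc + \delta c^*c)(c^*ac)^{-1/2}$) and Lemma \ref{unsqr} (bounding the difference of the resulting square roots by a multiple of $\|\delta c^*c\|^{1/2}$).
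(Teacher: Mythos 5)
Your treatment of the first identity in the strictly accretive case takes a genuinely different route from the paper. The paper proves $c^{\ast}(a\#b)c=(c^*ac)\#(c^*bc)$ by transporting the integral representation $(a\#b)^{-1} = \frac{2}{\pi}\int_0^\infty (ta+\frac{1}{t}b)^{-1}\,\frac{dt}{t}$ under congruence by $c$; you instead verify directly that $c^*Gc$ is a strictly accretive solution of $x(c^*ac)^{-1}x = c^*bc$ and invoke the uniqueness clause of Theorem \ref{straccr}. Both are correct; yours is shorter and avoids the integral entirely, at the cost of leaning on the uniqueness statement (which the paper itself uses for the third identity, where your argument --- adding $Ga^{-1}G=b$ and $Gb^{-1}G=a$ and inverting --- is exactly the paper's). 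Your handling of the ``in particular'' clause via $a=(cc^*)^{-1}$ and Lemma \ref{cocas} is what the paper intends, and you correctly anticipate the one real continuity subtlety in the limiting step: the perturbation appearing on the right-hand side is $\delta c^*c$ rather than $\delta 1$, and it is resolved exactly as you propose, by Lemmas \ref{istyp} and \ref{unsqr}; this is precisely what the paper does.

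There is, however, one gap in your case reduction. You pass from ``both strictly accretive'' directly to ``$a$ invertible and accretive, $b$ accretive'' by perturbing only $b$. But if $a$ is invertible and accretive without being strictly accretive, then the pair $(a,\, b+\delta 1)$ is not covered by your strict case: the uniqueness clause of Theorem \ref{straccr} requires \emph{both} arguments strictly accretive, and Corollary \ref{accr} supplies no uniqueness assertion for the mixed case. You therefore need the intermediate case that the paper treats separately: for $b$ strictly accretive and $a$ merely invertible and accretive, apply the strict case to $(a+\epsilon 1,\, b)$ and let $\epsilon\to 0^+$, using the continuity of $\#$ in its first variable established in the proof of Corollary \ref{accr} (note $c^*(a+\epsilon 1)c = c^*ac+\epsilon c^*c$, so the same $\epsilon c^*c$ versus $\epsilon 1$ issue you already identified arises here and is handled the same way). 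With that extra layer inserted, your argument is complete.
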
 \begin{proof}  First assume that $a, b$ are strictly accretive.  Then $a\# b$ is strictly accretive by Theorem \ref{straccr}. Also, if $c$ is invertible, then $c^{\ast}ac$ and $c^{\ast}bc$ are strictly accretive
(for example, if $a \geq \epsilon 1$ then $c^{\ast}ac \geq \epsilon c^* c$, and the 
latter is strictly positive. Hence $(c^{\ast}ac) \#(c^{\ast}bc)$  is strictly accretive, and 
its inverse, by the formula in the proof of Theorem \ref{straccr} is 
$$\frac{2}{\pi} \, \int_0^\infty \, (c^*(t a + \frac{1}{t} b)c)^{-1} \, \frac{dt}{t}
=  c^{-1} G^{-1} (c^{-1})^* ,$$  where $G = a \#b$.
so that $c^* G c = (c^{\ast}ac)\#(c^{\ast}bc)$.   

If $b$ is strictly accretive and $a$ is
invertible, then $c^{\ast}((a+ \epsilon 1)\#b) c=
(c^*(a+ \epsilon 1)c)\#(c^* b c)$.    Clearly the left side converges to $c^{\ast}(a\#b) c$ as $\epsilon \to 0^+$.
The  right side converges to $(c^*ac) \#(c^*bc)$  again by a slight variant of the proof of Corollaries \ref{accr},
\ref{accr2}, 
or Proposition   \ref{invsg}.

If  $a, b$ are merely accretive, and $a$ is invertible, then we have from the above that 
$$c^* (a \# (b + \epsilon 1)) c = (c^* a c)\#(c^* (b + \epsilon 1) c).$$
The  left side converges to $c^{\ast}(a\#b) c$ again.   If $w = c^* a c$ then 
$w^{-\frac{1}{2}} (c^* (b + \epsilon 1) c) w^{-\frac{1}{2}}$ and 
$w^{-\frac{1}{2}} (c^* b c + \epsilon 1) w^{-\frac{1}{2}}$ are both type $M$.
So by a variant of the last part of the proof of Corollary \ref{accr2} we have 
$$c^{\ast}(a\#b) c = \lim_{\epsilon \to 0^+} \, w^{\frac{1}{2}}
(w^{-\frac{1}{2}} (c^* b c + \epsilon 1) w^{-\frac{1}{2}})^{\frac{1}{2}} w^{-\frac{1}{2}}
= (c^*ac)\#(c^*bc).$$

If $a, b$ are strictly accretive then
 $a^{-1}$, $b^{-1}$, $(a+b)$ and $a^{-1}+b^{-1}$ are also strictly real positive by Lemma \ref{inv}.
Then the result follows as in the literature from the unicity of the solution to 
$x a^{-1} x = b$, and the relations $(a \# b) a^{-1} (a \# b)  = b$ and 
 $(a \# b) b^{-1} (a \# b)  = a$.
\end{proof}

Probably with slightly more work one can generalize
 the second assertion of the last result to the case that $a, b, a + b$ and 
$a^{-1} + b^{-1}$ are accretive 
and invertible.    Also, it seems possible that many of our results
on the geometric mean can be extended to the case when both operators are accretive
with no further conditions.  
We remark also that the well known algebraic-geometric-harmonic mean inequality
for the geometric mean of positive matrices fails badly for accretive matrices, as may be seen 
by considering $a = {\rm diag}(1+i,1-i), b = {\rm diag}(1-i,1+i)$ (here all three means are positive matrices, 
but violate--actually reverse--the usual inequalities).

  \section{The binomial  and Visser methods for the square root}  

We expect that the `binomial method' \begin{equation} \label{bin} 
X_{n+1} = \frac{1}{2}  (b + X_n^2) \; , \qquad X_0 = 0,\end{equation}
 and its variant the `Visser method' 
$$X_{n+1} = X_n + \alpha (a -X_n^2) \; , \qquad  X_0 = \frac{1}{2 \alpha} I,$$ work
in Banach algebras, under reasonable hypotheses.   Here $b = 1-a$, and it is expected that 
these schemes converge to $1 - a^{\frac{1}{2}}$ and $a^{\frac{1}{2}}$ respectively.   
Of course  it is well known that if $\Vert 1 - a \Vert \leq 1$
then the binomial {\em series} for $(1-(1-a))^t$ converges to $a^t$
(see e.g.\ \cite[Proposition 3.3]{BOZ}; this may have been first done in \cite{KV}).   However the binomial series is a little different from 
the binomial method above.   For operators $a$ on a Hilbert space
one can (more or less easily,
depending on the numerical range concerned)
 prove convergence results for the binomial method using the disk 
algebra functional calculus (coming from
von Neumann's inequality) or more generally
Crouzeix's functional calculus mentioned at the end of the introduction, which essentially reduces the computation
to one about scalars.  Then the matching Visser method result follows
by the usual substitution turning the binomial method into the 
Visser method (see the proof on \cite[p.\ 159]{High}, or Corollary \ref{visfa} below). 
This provides an effective iterative 
`polynomial approximation' for the square root of any operator in ${\mathfrak F}_A$ for 
an operator algebra $A$.   The following is intimately connected with the 
complex dynamics of the Mandelbrot set.   Indeed the scalar case of the `binomial method' (\ref{bin}) 
if we change variables $w = 2 x$, and let $c = b/2$, becomes the 
usual quadratic iteration $w_{n+1} = w_n^2 + c$ used to define
the Mandelbrot set.      The `main cardioid' for the binomial method is the set 
of attracting fixed points of $z \mapsto \frac{1}{2}(z^2 + b)$; and this may   be 
obtained almost identically to the Mandelbrot set case \cite[p.\ 15]{Beardon} 
from the open unit disk $D(0,1)$ by subtracting the latter from 1, then squaring all
elements, and then subtracting the resulting set from $1$.   

\begin{theorem} \label{binfa}   Let $b$ be a Hilbert space operator  with  numerical range  contained in 
a compact subset $E$ of  the
cardioid $2 z - z^2$ for $|z| < 1$, or more generally contained in 
the union of $E$ and the closed unit disk $\bar{D}(0,1)$. 
The binomial method {\rm  (\ref{bin})} applied to $b$ converges 
to $1 - a^{\frac{1}{2}}$, where $a = 1-b$.     As a special case, for any 
contraction $b$  on a Hilbert space (that is, $\Vert b \Vert \leq 1$), 
 the binomial method converges
to $1 - a^{\frac{1}{2}}$.
\end{theorem}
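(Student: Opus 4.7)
My plan is to apply Crouzeix's analytic functional calculus in order to reduce the norm convergence of the operator iterates $X_n = p_n(b)$ to a scalar uniform-convergence statement on $W(b)$. Here $p_0(z) = 0$, $p_{n+1}(z) = \tfrac{1}{2}(z + p_n(z)^2)$, and the candidate limit is $\phi(z) := 1 - (1-z)^{1/2}$ taken with the principal branch; this function is holomorphic on $\mathbb{C}\setminus[1,\infty)$ and extends continuously to $z=1$. Since $W(b) \subset E \cup \bar{D}(0,1)$ meets $[1,\infty)$ only possibly at the cusp $z=1$, Theorem~\ref{lrs} identifies $\phi(b) = 1 - a^{1/2}$; after a routine $\epsilon$-thickening to handle that boundary point in the Crouzeix estimate, we obtain
\[
\bigl\| X_n - (1-a^{1/2}) \bigr\| \;\leq\; K \sup_{z \in W(b)} |p_n(z) - \phi(z)|,
\]
so the task reduces to showing $p_n \to \phi$ uniformly on $W(b)$.

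For the scalar analysis, the fixed-point relation $\phi(z)^2 = 2\phi(z) - z$ yields the recursion
\[
y_{n+1}(z) \;=\; \bigl(\phi(z) + \tfrac{1}{2}y_n(z)\bigr)\, y_n(z), \qquad y_0(z) = -\phi(z),
\]
where $y_n := p_n - \phi$. The attracting condition $|\phi(z)| < 1$ describes precisely the open main cardioid $\{2u - u^2 : |u|<1\}$, via the substitution $u = \phi(z)$. On the compact subset $E$ of the open cardioid we have $|\phi(z)| \leq \rho < 1$ uniformly; a direct induction on the recursion then yields a uniform geometric bound $|y_n(z)| \leq C(\rho')^n$ with $\rho' \in (\rho,1)$, giving uniform exponential decay on $E$.

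For the portion of $W(b)$ inside $\bar{D}(0,1)$, an easy induction shows $|p_n(z)| \leq 1$ (since $|z|\leq 1$ and $|p_n|\leq 1$ imply $|p_{n+1}|\leq 1$), and $|\phi(z)|\leq 1$ on $\bar{D}(0,1)$ as well, so $|y_{n+1}(z)| \leq |y_n(z)|$ uniformly on $\bar{D}(0,1)$. The cardioid boundary parametrization $z = 1 + 4\sin^2(\theta/2)e^{i\theta}$ satisfies $|z|\geq 1$ with equality only at the cusp $z=1$, so $D(0,1)$ is strictly inside the main cardioid, and the geometric bound of the previous paragraph applies on any compact subset of $D(0,1)$. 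Uniform convergence up to the full closed disk then follows by combining Vitali--Montel on the uniformly bounded holomorphic family $\{p_n\}$ on $D(0,1)$ with the nonexpansion estimate above, and with the direct parabolic estimate $q_{n+1} = q_n(1-q_n/2)$ at $z=1$ (where $q_n := 1-p_n(1)$), which gives $q_n = O(1/n)$.

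The main obstacle is the parabolic fixed point at the cusp $z=1$: the multiplier $\phi(1) = 1$ is neutral, so convergence degrades from geometric to $O(1/n)$ there, and the recursion yields only the non-strict contraction $|y_{n+1}|\leq|y_n|$ uniformly on $\bar{D}(0,1)$. The resolution is to split $\bar{D}(0,1)$ into a small piece $\bar{D}(1,\delta)\cap\bar{D}(0,1)$ (on which the explicit parabolic estimate for $q_n$ gives a uniform $O(1/n)$ rate) together with its complement in $\bar{D}(0,1)$ (on which $|\phi|$ is uniformly bounded below $1$, giving geometric decay), and then to patch. Combining with the Crouzeix inequality completes the proof.
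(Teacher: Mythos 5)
Your overall architecture (the polynomial iterates $p_n$, Crouzeix's functional calculus to reduce to scalar uniform convergence on $W(b)$, the error recursion $y_{n+1}=(\phi+\tfrac12 y_n)y_n$, and the split into a cardioid piece and a disk piece) matches the paper's. But the step you lean on for the cardioid piece fails. From $y_0=-\phi$ the recursion gives $|y_{n+1}|\le(|\phi|+\tfrac12|y_n|)|y_n|$, which is a contraction only when $|y_n|\le 2(1-|\phi|)$; since $|y_0|=|\phi|$, your induction does not even start once $\sup_E|\phi|>2/3$. Worse, monotone geometric decay is genuinely false near the boundary of the cardioid: for $z$ near $-3$ one has $\phi(z)$ near $-1$, the multiplier $\phi$ has modulus close to $1$, and the orbit of $0$ oscillates through a long transient before entering the local basin (this is the root of the period-two bulb of the Mandelbrot set). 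Pointwise convergence of $p_n(z)\to\phi(z)$ on the open cardioid is a global complex-dynamics fact (the critical orbit is attracted to the attracting fixed point), not a local contraction estimate; the paper imports it from the scalar literature and then upgrades to uniform convergence on compact subsets via uniform boundedness of $(p_n)$ on the cardioid together with Vitali--Montel. You need that input for $E$; the ``direct induction'' cannot replace it.

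The disk piece also has a gap in the patching. The nonexpansion $|y_{n+1}|\le|y_n|$ on $\bar D(0,1)$ only yields $|y_n|\le|y_0|=|\phi|$ there, so combining it with Vitali--Montel on the open disk does not give uniformity up to the boundary; and the parabolic estimate $q_{n+1}=q_n(1-q_n/2)$ lives only at the single point $z=1$, so it does not by itself give a uniform $O(1/n)$ rate on $\bar D(1,\delta)\cap\bar D(0,1)$ as you assert. The clean repair (and the paper's route) is Dini's theorem: $|y_n(z)|$ is a decreasing sequence of continuous functions on the compact set $\bar D(0,1)$ with pointwise limit $0$ --- every point of $\bar D(0,1)\setminus\{1\}$ lies in the open cardioid because the cardioid boundary satisfies $|2e^{i\theta}-e^{2i\theta}|=|2-e^{i\theta}|\ge 1$ with equality only at the cusp, and $z=1$ is covered by your explicit computation --- hence the convergence is uniform. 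Finally, a smaller point: Crouzeix's inequality as quoted applies to rational functions, so it is safer to estimate $\|p_m(b)-p_n(b)\|\le K\|p_m-p_n\|_{W(b)}$, conclude that $(X_n)$ is Cauchy, and then identify the limit $w$ from $w=\tfrac12(w^2+b)$ together with a spectral argument, rather than applying the functional calculus directly to the non-rational function $\phi$.
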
 \begin{proof}  Let ${\mathcal D}$ be the union of  the indicated disk and 
cardioid.   Define 
polynomials $q_n(z)$ on ${\mathcal D}$  by $q_0 = 0$ and $q_{n+1}(z)  = \frac{1}{2} (z + 
q_{n}(z)^2)$.  
Then $X_n = q_n(b)$, and we need to show that
$\Vert X_n - 1 + c \Vert \to 0$, where $c = (1-b)^{\frac{1}{2}}$.
 By the scalar case (see e.g.\ \cite[Theorem 6.14]{High}), $(q_n(z))$  converges pointwise on  
the interior of the cardioid to
$1 - (1-z)^{\frac{1}{2}}$, and the latter function is certainly analytic on some open subset of ${\mathcal D}$.  
Moreover $(q_n)$ is well known (and easily seen) to be uniformly bounded on the `main cardioid'.
(For example, this cardioid is bounded by $4$, and so if $z_n = q_n(z)$ for some $z$ in the cardioid, 
and if $|z_n|=4+a$, where $a>0$ then
$$|z_{n+1}| \geq \frac{1}{2} |z_n|^2-2=\frac{1}{2}  (4+a)^2-2=6+4a+\frac{1}{2} a^2>4 +4a.$$
By induction $|z_{n+k}|> 4 +4^k a \to \infty$ as $k \to \infty$, so $z_n  \to \infty$, which contradicts
one of the definitions of the Mandelbrot set--as the points whose iterations are bounded.)  
Thus by Vitali's theorem combined with Montel's theorem (see \cite[Section 3.3]{Beardon}), 
$(q_n)$ converges uniformly on any compact subset of the interior of the cardioid.   
We next show that $r_n(z) = q_n(z) - 1 + (1-z)^{\frac{1}{2}} \to 0$
uniformly on the disk.   We use an idea in the argument
for the scalar case from \cite{High}.    Let $w = (1-z)^{\frac{1}{2}}$.  We have 
$$r_{n+1}(z) = \frac{1}{2} ((z + 
q_{n}(z)^2) - 1 + w = \frac{1}{2}(q_n(z) + 1 - w) \, r_n(z) .$$
It is clear by induction that if $|z| \leq 1$ then $|q_n(z)| \leq 1$ for all $n$ (indeed if this is true for $n$ then by the binomial theorem we have 
$|q_{n+1}(z)| \leq \frac{1}{2} (1 + |q_n(z)|^2) \leq 1$).
For $|z| \leq 1$ we have 
$$|1-w| = |1 - (1-z)^{\frac{1}{2}}| 
\leq - \sum_{k=1}^\infty \, (-1)^k  \, { 1/2 \choose k} = 1 . $$
Hence $$|r_{n+1}(z) | \leq \frac{1}{2}(|q_n(z)| + |1-w|) |r_n(z) |
\leq |r_n(z)|.$$
Thus $(|r_n(z)|)$ is decreasing with pointwise limit $0$, so by Dini's theorem $(r_n)$ converges uniformly
on the unit disk.

Let   $E$ be the (closed) numerical range of $b$.    By the hypotheses
on $E$, and the facts just established, $(q_n)$ converges uniformly on $E$.
By Crouzeix's functional calculus (or we could use the disk algebra functional calculus coming from
von Neumann's inequality if $b$ is a contraction), for some constant $K$ we have 
$$\Vert q_m(a) - q_n(a) \Vert \leq K \Vert q_m - q_n \Vert_{E}, \quad m, n \in \Ndb.$$
Thus $(X_n)$ is Cauchy, and hence convergent to $w$ say.  We have $w = \frac{1}{2} (w^2 + b)$, 
so $a = (1-w)^2$.   We also note that any point in 
the spectrum of $w$  is a limit of $(q_k(z))$ for some $z \in E$, and hence equals
 $1 - (1-z)^{\frac{1}{2}}$.   Thus the spectrum of $1-w$ is the right half plane, and hence  
$1-w$ is the principal square root of $a$.
  \end{proof}

\begin{remark}  1) \ If $b$ in the last proof is a contraction then the part of the proof using Dini's theorem gives a
seemingly more controlled convergence, with the `error term' dominated by a decreasing null sequence.

\medskip

2) \ One may rephrase the last result in terms of subsets of the unit disk, instead of subsets of the cardioid.  Indeed
the homeomorphism between that disk and the cardioid mentioned before the theorem statement
gives a kind of passage between statements about 
$b$ and statements about  $1- a^{\frac{1}{2}} = 1 - (1-b)^{\frac{1}{2}}$.  
\end{remark}

\begin{corollary} \label{visfa}   Let $a$ be  
an operator on a Hilbert space with the numerical range of $1 - t^2 a$ (that is, 
$1-t^2 W(a)$) contained in $E \cup \bar{D}(0,1)$, where $E$ is as in the last theorem. 
Then the Visser method 
$X_{k+1} = X_k + \frac{t}{2} \, (a - X_k)^2$ with initial guess $X_0 = \frac{1}{t} I$, converges
to $a^{\frac{1}{2}}$.     In particular this holds if  $a \in {\mathfrak c}_{B(H)}$.  
\end{corollary}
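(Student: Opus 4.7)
The plan is to reduce the Visser scheme to the binomial scheme of Theorem \ref{binfa} by a linear change of variable, and then invoke that theorem directly. Specifically, I would set $Y_k = 1 - t X_k$, so that $X_k = (1-Y_k)/t$, and recast the Visser recursion.

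Substituting into $X_{k+1} = X_k + \frac{t}{2}(a - X_k^2)$, the $1/t$ and the linear terms in $Y_k$ should cancel in exactly the way needed. A direct computation gives
\[
Y_{k+1} = \tfrac{1}{2}\bigl(Y_k^2 + (1 - t^2 a)\bigr),
\]
which is the binomial iteration \eqref{bin} applied to $b := 1 - t^2 a$. The initial value $X_0 = \tfrac{1}{t} I$ translates into $Y_0 = 0$, matching the starting point prescribed in Theorem \ref{binfa}. Under the hypothesis that $W(b) = W(1 - t^2 a) \subset E \cup \bar{D}(0,1)$, Theorem \ref{binfa} then says that $Y_k \to 1 - (1 - b)^{1/2}$, i.e.\ $Y_k \to 1 - (t^2 a)^{1/2} = 1 - t \, a^{1/2}$ (using the uniqueness of the principal square root and $t>0$). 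Hence $X_k = (1 - Y_k)/t \to a^{1/2}$.

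For the ``in particular'' clause, suppose $a \in \mathfrak{c}_{B(H)}$, so there is some $s > 0$ with $\|1 - sa\| \le 1$. Setting $t = \sqrt{s}$ yields $\|1 - t^2 a\| \le 1$, and therefore $W(1 - t^2 a) \subset \bar{D}(0,1) \subset E \cup \bar{D}(0,1)$, which brings us into the first part of the corollary.

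The only place there is anything to watch is the substitution computation itself (making sure the $1/t$ cancels correctly and the resulting recursion really is the binomial one), together with pinning down which square root Theorem \ref{binfa} delivers, so that the limit of $Y_k$ is genuinely $1 - t a^{1/2}$ with the principal root of $a$. Both points are short: the substitution is an algebraic identity, and the identification of the root was already handled inside the proof of Theorem \ref{binfa} via the spectral mapping argument placing $1 - w$ in the right half plane.
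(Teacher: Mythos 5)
Your proposal is correct and follows essentially the same route as the paper: both reduce the Visser iteration to the binomial iteration for $b = 1 - t^2a$ via the affine substitution $Y_k = 1 - tX_k$ (the paper phrases it as checking that $\frac{1}{t}(1-B_n)$ equals the $n$th Visser iterate) and then invoke Theorem \ref{binfa}, handling the $\mathfrak{c}_{B(H)}$ case identically by choosing $t$ with $\Vert 1 - t^2 a\Vert \le 1$. Your substitution computation and the identification of the limit as $1 - t\,a^{1/2}$ are both accurate.
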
 \begin{proof}   
By Theorem \ref{binfa} the binomial method  
applied to   $b = 1 - t^2 a$  gives a sequence $(B_n)$ converging  
to $1 - t \, a^{\frac{1}{2}}$.    So $\frac{1}{t} (1 - B_n) \to a^{\frac{1}{2}}$.   
However one can check that
$\frac{1}{t} (1 - B_n)$ coincides with  the $n$th step in the Visser method in the statement.

 If  $a \in {\mathfrak c}_{B(H)}$ then $\Vert 1 - t^2 a \Vert \leq 1$ for some $t > 0$, 
so we are in the special case that $b$ is a contraction in the last theorem.
 \end{proof}

\begin{remark}   There is probably a similar method for the $p$th root, and results similar to the two 
above in that case. \end{remark} 

\section{Newton's method for the pth root}

Newton's method for the $p$th root of $a$, for $p > 1$, is
$$X_{k+1} = \frac{1}{p} \, X_k \, ((p-1) I + X_k^{-p} \; a).$$   
With  $X_0 = I$ or $X_0 = \frac{1}{2}(a +I)$  this method
need not work for accretive matrices.  Indeed it fails even for some scalars in the right half
plane (see the discussion on page 178--179 of \cite{High}).    In the light of the scalar case, 
one would expect  that Newton's method for the $p$th root of $a$ with starting guess  $X_0 = I$
works with some restriction on $a$, such as that  the numerical range 
of $a$ should be in the region of convergence for the scalar case.
Let $${\mathcal D} = \{ z  \in \Cdb 
 : \, {\rm Re}(z) > 0 \; {\rm and} \; |z| \leq 1 + \epsilon \}
\cup \{ z  \in \Cdb 
 : \, {\rm Re}(z) > 0 \; {\rm and} \; |z-1| \leq 1 \}.$$

\begin{proposition} \label{newp}   Let $p > 1$ be an integer.   There exists $\epsilon > 0$ such that
for any Hilbert space operator $a$ with numerical range contained in the set
${\mathcal D}$ above, Newton's method for the $p$th root above, with initial point $X_0 = I$, 
converges to $a^{\frac{1}{p}}$.   \end{proposition}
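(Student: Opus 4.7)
The plan is to mimic the strategy of Theorem \ref{preq} and Theorem \ref{binfa}: reduce to the scalar case on $W(a) \subset \mathcal{D}$ and transfer to operators via Crouzeix's analytic functional calculus. Since $X_0 = I$ commutes with $a$, a straightforward induction shows that every $X_k$ lies in the (commutative) closed subalgebra generated by $a$, and is of the form $X_k = q_k(a)$ for rational functions $q_k$ with $q_0 \equiv 1$ and $q_{k+1}(z) = \tfrac{1}{p} q_k(z)\bigl((p-1) + z/q_k(z)^p\bigr)$, provided the scalar iterates stay off $0$ throughout $\mathcal{D}$.

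The first step is the scalar analysis. The scalar iteration is Newton's method applied to $f(x) = x^p - z$, and for $z$ with $\mathrm{Re}(z) > 0$ and $|z-1| \leq 1$, starting from $x_0 = 1$, the classical theory (see e.g.\ \cite[Theorem 7.2 and its discussion]{High}, or an argument using Schr\"oder's theorem after changing variables $y_k = x_k/z^{1/p}$ and noting that $|y_0 - 1| = |1 - z^{-1/p}|$ is small on the relevant region) shows that the iterates stay in the open right half plane and converge quadratically to $z^{1/p}$. I would give a direct proof using the substitution $y_k = x_k z^{-1/p}$, under which the recursion becomes $y_{k+1} = \tfrac{1}{p}((p-1)y_k + y_k^{1-p})$, a scalar Newton method for the $p$th root of $1$; on a neighborhood of $1$ this is a contraction with quadratic rate of the form $|y_{k+1}-1| \leq C|y_k-1|^2$. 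This pins down uniform convergence, at a rate of $1/2^k$ or better, on the portion of $\mathcal{D}$ given by $|z-1| \leq 1$, $\mathrm{Re}(z) > 0$.

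The second step is to extend this to the part of $\mathcal{D}$ where $|z| \leq 1+\epsilon$. Here one chooses $\epsilon$ small so that a single iteration carries this region into a fixed compact subset of the basin of attraction already handled in step one, after which the quadratic estimate takes over. A boundedness argument (analogous to the one used in Theorem \ref{binfa} to rule out escape to infinity) combined with Vitali--Montel and pointwise convergence on the open basin gives uniform convergence of $q_n$ to the function $z \mapsto z^{1/p}$ on any compact subset of $\mathcal{D}$; in particular on $W(a)$, which is a compact subset of $\mathcal{D}$ by hypothesis. Along the way one verifies that $q_n$ has no zeros on $W(a)$, so that $X_n = q_n(a)$ is invertible and the next iterate is defined.

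For the final step, Crouzeix's inequality yields
\[
\|X_n - a^{1/p}\| = \|q_n(a) - a^{1/p}\| \leq K \sup_{z \in W(a)} |q_n(z) - z^{1/p}| \longrightarrow 0,
\]
which gives the conclusion. The main obstacle will be the scalar analysis on the crescent-shaped region $\mathcal{D}$: establishing uniform (not merely pointwise) convergence, pinning down how small $\epsilon$ must be taken (it will depend on $p$), and keeping control of the fact that each $q_n$ is rational rather than polynomial, so one must verify at every stage that $q_n$ has no zero in $\mathcal{D}$. Once that scalar picture is in hand, the Crouzeix transfer is automatic.
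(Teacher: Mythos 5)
Your overall architecture is the same as the paper's: prove uniform convergence of the scalar Newton iterates $q_n(z)\to z^{1/p}$ on compact subsets of ${\mathcal D}$, then transfer to $X_n=q_n(a)$ via Crouzeix's $K$-spectral-set inequality. The transfer step and the identification of the limit are fine. The problem is that essentially all of the mathematical content of the proposition sits in the scalar analysis, and the two arguments you sketch for it have genuine gaps. First, on $\bar{D}(1,1)\cap\{\mathrm{Re}\,z>0\}=\bar{D}(1,1)\setminus\{0\}$ your substitution $y_k=x_kz^{-1/p}$ gives $y_0=z^{-1/p}$, which is \emph{unbounded} as $z\to 0$ inside this region, so the claim that $|y_0-1|=|1-z^{-1/p}|$ is small is false and the local quadratic contraction near $y=1$ cannot be applied uniformly; nor is it clear that the relevant $y_0$ all lie in the basin of attraction of $1$ for the Newton map of $y^p=1$, whose global structure for $p>2$ is notoriously fractal. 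The paper avoids this entirely by using Guo's global estimate: $1-z/q_1(z)^p=\sum_{i\geq 2}c_i(1-z)^i$ with $c_i>0$, $\sum_i c_i=1$, whence $|1-z/q_n(z)^p|\leq|1-z/q_1(z)^p|^{2^{n-1}}$; and even this only yields a strict bound $<1$ after restricting to $K_1=\{z\in\bar{D}(1,1):\mathrm{Re}\,z>\tfrac14\}$ (the excluded sliver satisfies $|z|\leq 1/\sqrt2$ and is absorbed into the other region).

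Second, the set $\{\mathrm{Re}\,z>0,\ |z|\leq 1+\epsilon\}$ is the genuinely delicate part --- it is the source of the $\epsilon$ in the statement --- and your proposal that ``a single iteration carries this region into a fixed compact subset of the basin of attraction already handled in step one'' does not work as stated: the basin depends on the parameter $z$, and for $z$ near the imaginary axis with $|z|$ close to $1$ the point $q_1(z)=(p-1+z)/p$ does not place you in any configuration covered by your first step (which assumed starting point $1$ and parameter $z\in\bar{D}(1,1)$). The paper handles this region by citing Iannazzo's Lemma 2.11, a substantive result specifically about convergence of the scalar/matrix Newton $p$th-root iteration on $\{\mathrm{Re}\,z>0,\ |z|\leq 1+\epsilon\}$; it is not recoverable from a one-step reduction. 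Finally, the well-definedness of the rational iterates ($q_n(z)\neq 0$ on ${\mathcal D}$ for all $n$), which you correctly flag as necessary both for the recursion and for the Vitali--Montel argument, is exactly what the Guo and Iannazzo lemmas supply and is not established by your sketch.
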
  \begin{proof}    
Define a sequence of rational functions $$q_{k+1}(z) =  \frac{1}{p} \, q_k(z) \, ((p-1)  + \frac{z}{q_k(z)^{p}}) \, , \qquad q_0 = 1,$$ 
for all $z$ where this makes sense (that is, where $q_k(z)$ is defined for all $k \in \Ndb$).
By  \cite[Lemma 2.11]{Ianz} there exists 
$\epsilon > 0$ such the sequence above does make sense if 
${\rm Re}(z) > 0$ and $|z| \leq 1 + \epsilon$, and 
 the  $(q_k)$ converges  to $z^{\frac{1}{p}}$  uniformly
on any compact subset of   $\{ z  \in \Cdb
 : \, {\rm Re}(z) > 0 \; {\rm and} \; |z| \leq 1 + \epsilon \}$.  

We next consider the set 
$K_1 = \{ z \in \bar{D}(1,1) : \; {\rm Re} \, z > \frac{1}{4} \}$.   On this set, if $(c_k)_{k\geq 2}$ is the 
sequence of positive numbers with sum $1$ from Lemma 1 in \cite[Section 3]{Guo}, we have 
 $|c_2 + c_3 (1-z)| \leq d < c_2 + c_3$ for some constant $d$.  This is
because $$(c_2 + c_3)^2 - |c_2 + c_3 (1-z)|^2 = 2 c_2 c_3 {\rm Re}(z) + c_3^2 (1- |1-z|^2)
\geq  \frac{1}{2} c_2 c_3 .$$ 
By the argument in the just mentioned lemma from \cite{Guo} we have
 $| 1 - z /q_1(z)^p | \leq \alpha$ for all $z \in K_1$, where
$\alpha = d + \sum_{k=4}^\infty c_k < 1$, 
and $$| 1 - z /q_n(z)^p | \leq | 1 - z /q_1(z)^p |^{2^{n-1}} \leq \alpha^{2^{n-1}}.$$
 The  sequence $(q_n(z))$ is well defined on $K_1$ by the argument
in \cite{Guo}.  And   $(q_n(z)^p)$ and therefore also $(q_n(z))$  is
uniformly bounded  on $K_1$, by a constant $M$ say, since
$q_n(z)^p = \frac{z}{1 - (1 - z /q_n(z)^p)}$.   Moreover, since $$|q_{n+1}(z) - q_n(z)|
= \frac{1}{p} |q_n(z)| |1 - z /q_n(z)^p| \leq  \frac{M}{p}  \alpha^{2^{n-1}} ,  \qquad z \in K_1, $$
it is easy to see that $(q_n)$ is uniformly Cauchy on $K_1$, so  uniformly  convergent.   
Thus the rational functions $(q_k)$ converge  to $z^{\frac{1}{p}}$  uniformly
on  any compact subset of the set ${\mathcal D}$ above.

Suppose that $W(a) \subset 
{\mathcal D}.$   By Crouzeix's 
functional calculus, for some constant $K$ we have 
$$\Vert q_m(a) - q_n(a) \Vert \leq K \Vert q_m - q_n \Vert_{W(a)}, \quad m, n \in \Ndb.$$
Thus $(X_n)$ is Cauchy, and hence convergent to $w$ say.   Since 
$$p X_k^{p-1} (X_{k+1} - X_k  +\frac{1}{p} \, X_k) = a,$$ in the limit we have 
$w^p =   a$.     We also note that by spectral theory any point in 
the spectrum of $w$  is a limit of $(q_k(z))$ for some $z \in E$ (namely $z = \chi(a)$ where $\chi$ is
a character of the closed algebra generated by $1$ and $a$), and hence equals
 $z^{\frac{1}{p}} \in S_{\frac{\pi}{2p}}$.   Thus $w$ is   the principal $p$th root of $a$.
\end{proof}  

\begin{remark}  (1) \ It is easy to see, by the same argument as the scalar case
(without needing Crouziex's functional calculus),
that the convergence in the last theorem is 
`quadratic' in the sense of the usual Newton $p$th root convergence for scalars.  
 
\medskip

(2) \   Experimentation shows that the polynomials $(q_n)$ in the last proof 
seem to converge uniformly on  the set  $\bar{{\mathcal D}}$.
If this is indeed the case then the last proof shows that Newton's method for the $p$th root
above converges to $a^{\frac{1}{p}}$ for any Hilbert space operator $a$ with numerical range contained in the set
$\bar{{\mathcal D}}$.

\medskip

(3) \  A similar idea of course shows that Newton's method for the $p$th root converges for
Hilbert space  operators with $T \geq 0$, no doubt a well known fact.   
Indeed the Newton iterates take place
in the unital $C^*$-algebra generated by $T$, which by Gelfand theory
may be taken to be $C(E)$ for a compact set $E \subset [0,\infty)$.
The functions $(q_n)$ in the last proof are easily seen to be decreasing (certainly for
$n \geq 2$) and hence
converge uniformly 
on $E$ by Dini's theorem.   Hence the Newton iterates are $\Vert q_n(T) - T^{\frac{1}{p}} 
\Vert = \Vert q_n - q \Vert_E \to 0$, where $q(t) = t^{\frac{1}{p}}$.   
\end{remark} 

\begin{proposition} \label{newpba}   Let $a$ be an element in a unital Banach algebra $A$ with
$\Vert 1- a \Vert < 1$.      
Let $p > 1$ be an integer.   Then  Newton's method for the $p$th root above, with initial point $X_0 = I$, 
converges to $a^{\frac{1}{p}}$ (indeed there exist  constants $\delta < 1$ and 
$C$ with $\Vert X_k  - a^{\frac{1}{p}} \Vert \leq C \delta^{2^{k}}$ for each $k$, and the latter
has quadratic convergence).    In particular, this is the case by Proposition {\rm \ref{Rsa}}  if $a$ is strictly accretive and
 $\Vert 1 - 2 a \Vert \leq 1$.   \end{proposition}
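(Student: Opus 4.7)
The plan is to reduce the statement to a uniform scalar convergence estimate and then transfer to the Banach algebra using the Riesz holomorphic functional calculus (which, unlike the Crouzeix calculus used in Proposition \ref{newp}, is available in any Banach algebra).

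First I would note that $\Vert 1-a\Vert<1$ forces $a$ to be invertible via the Neumann series, with $\text{Sp}(a)\subset \bar{D}(1,r)$, where $r:=\Vert 1-a\Vert<1$. Since $1-r>0$, this compact spectrum is contained in the open set $\Omega:=D(1,1)\cap\{\text{Re}\,z>0\}$, and so $a^{1/p}$ exists in the closed subalgebra generated by $a$ (Theorem \ref{lrs}). Define the scalar Newton iterates by $q_0(z)\equiv 1$ and $q_{k+1}(z)=\frac{1}{p}\,q_k(z)\bigl((p-1)+z/q_k(z)^p\bigr)$. The core step I would aim to prove is a uniform scalar estimate: for any compact $K\subset\Omega$ there are constants $M$, $\alpha\in(0,1)$ and $C_0>0$ such that every $q_k$ is analytic on a neighborhood of $K$, with $|q_k(z)|\le M$ and
$$|1-z/q_k(z)^p|\le\alpha^{2^{k-1}}\qquad(z\in K,\ k\ge 1);$$
then via the factorization $q_k(z)^p-z=\prod_{j=0}^{p-1}(q_k(z)-\zeta^j z^{1/p})$ (with $\zeta=e^{2\pi i/p}$), and the fact that the $j\ne 0$ factors stay bounded below on $K$ (since $z^{1/p}$ is bounded away from $0$ there while $q_k(z)$ is close to $z^{1/p}$), one extracts
$$|q_k(z)-z^{1/p}|\le C_0\delta^{2^k}\qquad(z\in K)$$
for some $\delta<1$.

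This uniform estimate is essentially the Guo argument already used inside the proof of Proposition \ref{newp}. The proof there established it on $K_1=\bar{D}(1,1)\cap\{\text{Re}\,z\ge 1/4\}$, but the same reasoning goes through on any compact $K\subset\Omega$, because the identity
$$(c_2+c_3)^2-|c_2+c_3(1-z)|^2=2c_2c_3\,\text{Re}\,z+c_3^2(1-|1-z|^2)$$
is strictly positive throughout $\Omega$ and hence, by compactness, bounded below by a positive constant on $K$, which is all the Guo argument requires.

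With the scalar estimate in hand, choose a compact $K$ with $\bar{D}(1,r)\subset\text{int}(K)\subset K\subset\Omega$, and a simple closed positively oriented contour $\Gamma\subset\text{int}(K)$ winding once about $\text{Sp}(a)$. An easy induction (using that $q_k$ is nonvanishing on a neighborhood of $\text{Sp}(a)$, so that $X_k=q_k(a)$ is invertible and the next Newton step is legal) gives $X_k=q_k(a)$; the Riesz calculus then yields
$$X_k-a^{1/p}=\frac{1}{2\pi i}\int_\Gamma(wI-a)^{-1}\bigl(q_k(w)-w^{1/p}\bigr)\,dw,$$
so $\Vert X_k-a^{1/p}\Vert\le C\delta^{2^k}$ for a suitable constant $C$; quadratic convergence is immediate since $\delta^{2^{k+1}}=(\delta^{2^k})^2$. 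The expected main obstacle is just the uniform scalar Guo-type bound on a compact neighborhood of $\bar{D}(1,r)$; once this is in place, the transfer to the Banach algebra via the Riesz calculus is routine. The last assertion of the statement follows from Proposition \ref{Rsa} applied with $t=2$, since strict accretivity gives $0\notin W(a)$.
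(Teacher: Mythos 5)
Your proposal is correct, but it takes a genuinely different route from the paper's proof. The paper never leaves the Banach algebra: following Guo's Lemma 1 it writes $1-aX_1^{-p}=\sum_{i\ge 2}c_i(1-a)^i$ with the $c_i$ positive and summing to $1$, so that $\delta:=\Vert 1-aX_1^{-p}\Vert\le\sum_{i\ge 2}c_i\Vert 1-a\Vert^i<1$ follows directly from $\Vert 1-a\Vert<1$ and submultiplicativity of the norm; the squaring relation $\Vert 1-aX_k^{-p}\Vert\le\delta^{2^{k-1}}$ together with boundedness of $(X_k)$ then makes $(X_k)$ Cauchy with the stated rate, and spectral theory is used only at the end to identify the limit as the principal root. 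You instead run Guo's estimates at the scalar level, uniformly on a compact neighborhood $K$ of ${\rm Sp}(a)\subset\bar{D}(1,\Vert 1-a\Vert)$ inside $D(1,1)$, and transfer them by a Cauchy integral; your observation that the relevant positivity inequality holds on all of $D(1,1)$ is right, and the induction $X_k=q_k(a)$ is legitimate since the scalar analysis shows $q_k$ is nonvanishing near ${\rm Sp}(a)$. Your route buys something real: it uses only the location of the spectrum, so it proves the proposition under the weaker hypothesis that the spectral radius of $1-a$ is less than $1$. What it costs is (i) the extra step of converting $|1-z/q_k(z)^p|\le\alpha^{2^{k-1}}$ into a bound on $|q_k(z)-z^{1/p}|$ --- your factorization works, but you must first establish uniform convergence of $(q_k)$ to the principal branch before you can bound the off-branch factors from below (the telescoping bound $|q_{k+1}(z)-q_k(z)|\le\frac{M}{p}\alpha^{2^{k-1}}$ already used in the proof of Proposition \ref{newp} yields the rate more directly) --- and (ii) a constant $C$ involving resolvent norms on the contour, hence less explicit than the paper's $\delta=\Vert 1-aX_1^{-p}\Vert$.
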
  \begin{proof}   We follow the argument in \cite{Guo}, noting that 
Lemma 1 there holds with the same proof to show that the Newton sequence is well defined, and if
$\delta = \Vert 1 - a X_1^{-p} \Vert$ then 
$$\Vert 1 - a X_k^{-p} \Vert \leq \Vert (1 - a X_1^{-p})^{2^{k-1}} \Vert \leq \Vert 1 - a X_1^{-p} \Vert^{2^{k-1}}
= \delta^{2^{k-1}},$$
and $$\delta = 
\Vert 1 - a X_1^{-p} \Vert = \Vert \sum_{i=2}^\infty \, c_i (1-a)^i \Vert < \sum_{i=2}^\infty \, c_i = 1.$$
So $a X_k^{-p} \to 1$ rapidly with $k$
(the `error' is  dominated by $\delta^{2^{k-1}}$ where $\delta < 1$, which 
has quadratic convergence).     Hence  $X_k^p a^{-1} \to 1$ and $X_k^p \to a$, which means that
$\Vert X_k^p - 1 \Vert = \Vert X_k^p - a + a - 1 \Vert< 1$ for $k$ large.
Hence  $\Vert X_k - 1  \Vert < 1$ by e.g.\ \cite[Proposition 3.3]{BOZ} and its proof, so that $(X_k)$ is bounded.
It follows as in the proof of \cite[Theorem 5]{Guo}   (which is a result about the  scalar case, not operators)
$$\Vert X_{k+1} - X_k \Vert = \frac{1}{p} \Vert X_k (1 - a X_k^{-p}) \Vert \leq 
\frac{K}{p} \Vert 1 - a X_1^{-p} \Vert^{2^{k-1}} \leq \frac{K}{p} \delta^{2^{k-1}} $$
for a constant $K$.    
Hence $(X_n)$ is Cauchy, and we can finish the proof as in Proposition \ref{newp}.     
By the usual   triangle inequality argument we get  $\Vert X_{k+m} - X_k \Vert$ and 
hence $\Vert X_k  - a^{\frac{1}{p}} \Vert$, dominated  by $C \delta^{2^{k}}$, 
for a constant $C$.
\end{proof}

An important remark about other `starting values' in Newton's method: If $a$ is an element of a unital Banach algebra, and if Newtons method for the $p$th root of $a$ starting at  $X_0 = 1$ converges to $a^{1/p}$, then for any $\theta$ with 
$|\theta| \leq \pi$ say, Newtons method for  for the $p$th root of
$e^{i \theta} a$  starting at  $X_0 = e^{i \theta/p}$, converges.   Its limit is $e^{i \theta/p} \,  a^{1/p}$,  which is a $p$th
root of $e^{i \theta} a$.   Indeed the latter is the principal $p$th root of $e^{i \theta} 
a$ by Theorem
\ref{lrs}, if $W(e^{i \rho} a)$ contains 
no strictly negative numbers for all $\rho$ between $0$ and $\theta$ (including 
$0$ and $\theta$).    To see this define $Y_k = e^{i \frac{\theta}{p}} X_k$.  It is easy to see that  $(Y_k)$ coincides with the iterates in Newtons method 
for $e^{i \theta} a$  starting at  $X_0 = e^{i\theta/p}$.

\medskip

As in Iannazzo's paper \cite{Ianz} note that for any 
strictly accretive  Hilbert space  operator  $a$,
$b = a^{\frac{1}{2}}/\Vert a^{\frac{1}{2}} \Vert$ is also 
strictly accretive by e.g.\ a result on p.\ 181 of \cite{Haase},
 and has norm $\leq 1$.   So $W(b)$ lies in the set ${\mathcal D}$ considered in  Proposition \ref{newp} above.
Thus Proposition \ref{newp} applies to $b$, and so we can use Newton's method to find
$b^{\frac{1}{p}}$, from which   $a^{\frac{1}{p}}$ is easily recovered.

Another method to find the $p$th root of $a$ is to use the sign function studied 
in Section 3, in the way indicated in \cite[Section 3]{DHM}  in the matrix case.
In fact the beautiful arguments of \cite[Section 3]{DHM} 
go through with `eigenvalues' replaced
by `spectrum'.   As in that reference, if $p$ is odd we replace it by $2p$
and replace $a$ by $a^2$.   If $p$ is an integer multiple of $4$
we keep dividing it by $2$ and replacing $a$ by the square root of $a$,
until $p/2$ is odd.   We then set 
$$C = \left[ \begin{array}{ccccl} 0 & 1 & 0 & \cdots  & 0 \\
0 & 0 &  1 & \cdots  & 0 \\
\vdots & \vdots &\vdots &\vdots & \vdots \\
0 & 0 &  0 &   \cdots  & 1 \\
a & 0 &  0 &   \cdots  & 0
\end{array} \right],$$   (with the new $a$, if we had to change $a$ as above).  
Then $a^{\frac{1}{p}}$ can be read
off from the $1$-$2$ entry of sign$(c)$.   And in Section 3
we discussed the Newton method for sign$(c)$ and its convergence. 
We obtain, as in  \cite[Section 3]{DHM}:
 
\begin{theorem} \label{Newtpsign}  Suppose that
$a$ is an invertible element of a unital operator algebra with no negative numbers
in its spectrum, and let $v$ be the $1$-$2$ entry of ${\rm sign}(C)$ where $C$ is as above.
Then $a^{\frac{1}{p}} = \frac{p}{2 \sigma} v$ where $\sigma = 1 + 2
\sum_{k=1}^r \, \cos (\frac{2 \pi k}{p})$, and $r$ is the greatest  integer less than or equal to $p/4$. 
\end{theorem}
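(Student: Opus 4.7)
The plan is to mimic the matrix argument of Denman--Hasan--Munson \cite{DHM} fairly literally, using Proposition \ref{sign} and the uniqueness theory for principal roots from Section 2 to pass from the scalar/matrix case to the operator-algebra case. After the reductions described just before the statement, I may assume $p \equiv 2 \pmod 4$ (so $p/2$ is odd), which guarantees that no $p$-th root of unity lies on the imaginary axis. A direct computation (induction on $p$) shows $C^p = a\, I_p$, and the spectral mapping theorem applied to $z \mapsto z^p$ gives
$$\mathrm{Sp}(C) \subset \{\omega^l \mu^{1/p} : \mu \in \mathrm{Sp}(a),\ l = 0, \ldots, p-1\},$$
where $\omega = e^{2\pi i/p}$. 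Since $\mathrm{Sp}(a)$ avoids $(-\infty,0]$ and no $\omega^l$ lies on $i\mathbb{R}$, this spectrum avoids $i\mathbb{R}$, so $\mathrm{sign}(C)$ is defined.

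The heart of the argument is a block-diagonalization of $C$ over $M_p(A)$. Let $D = \mathrm{diag}(1, a^{1/p}, (a^{1/p})^2, \ldots, (a^{1/p})^{p-1})$ (invertible in $M_p(A)$ because $a$, hence $a^{1/p}$, is invertible), let $F'$ be the scalar unnormalized Fourier matrix $F'_{ij} = \omega^{(i-1)(j-1)}$ (invertible in $M_p(\mathbb{C})$), and set $V = DF'$. A direct matrix multiplication using $C^p = aI_p$ and the orthogonality identity $\sum_l \omega^{(i-j)l} = p\,\delta_{ij}$ verifies
$$V^{-1} C V = \mathrm{diag}(\omega^l a^{1/p})_{l=0}^{p-1}.$$
Because $a^{1/p}$ has spectrum in the open right half-plane (by Theorem \ref{lrs}) and $\omega^l$ is a non-imaginary scalar commuting with $a^{1/p}$, the spectrum of each block $\omega^l a^{1/p}$ lies strictly in the open half-plane $\{z : \mathrm{Re}(\omega^{-l} z) > 0\}$; together with Proposition \ref{sign}(5)--(8) this gives $\mathrm{sign}(\omega^l a^{1/p}) = \epsilon_l \cdot 1$, where $\epsilon_l = \mathrm{sign}(\omega^l) \in \{\pm 1\}$.

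By Proposition \ref{sign}(6) applied to the similarity $V$, one obtains $\mathrm{sign}(C) = V \,\mathrm{diag}(\epsilon_0, \ldots, \epsilon_{p-1}) \,V^{-1}$, and writing out the product with the factorization $V = DF'$ (and $(F')^{-1}_{ij} = \frac{1}{p}\omega^{-(i-1)(j-1)}$) collapses the $(i,j)$ entry to
$$(\mathrm{sign}(C))_{ij} = \frac{1}{p}\,(a^{1/p})^{i-j}\sum_{l=0}^{p-1} \epsilon_l\, \omega^{(i-j)l}.$$
Specializing to the designated off-diagonal entry $v$ (with $i - j = 1$) gives $v = \frac{1}{p}\, a^{1/p} \sum_{l=0}^{p-1} \epsilon_l \omega^l$, so it remains to evaluate the scalar sum. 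Since $\epsilon_l = +1$ precisely when $\cos(2\pi l/p) > 0$, i.e.\ for $l \in \{0,1,\ldots,r\} \cup \{p-r, \ldots, p-1\}$ with $r = \lfloor p/4 \rfloor$, splitting the sum and using $\sum_{l=0}^{p-1} \omega^l = 0$ yields
$$\sum_{l=0}^{p-1} \epsilon_l\, \omega^l = 2\sum_{l=-r}^{r} \omega^l = 2\Bigl(1 + 2\sum_{k=1}^{r} \cos\tfrac{2\pi k}{p}\Bigr) = 2\sigma,$$
so $v = \frac{2\sigma}{p}\, a^{1/p}$, which rearranges to the claim.

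The main obstacle, modest but genuine, is the block-diagonalization step: one must check that $V$ is invertible in $M_p(A)$ (which is where the invertibility hypothesis on $a$ is used), and verify the conjugation $V^{-1} C V = \mathrm{diag}(\omega^l a^{1/p})$ at the operator level rather than merely on hypothetical joint eigenvectors. Both are straightforward matrix computations once one observes that everything appearing commutes with $a^{1/p}$, but they are the place where the argument could break down if one is not careful about the non-commutativity between $C$ and $D$.
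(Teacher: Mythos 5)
Your proposal implements exactly what the paper intends (the paper itself offers no proof beyond deferring to \cite[Section 3]{DHM} with ``eigenvalues'' replaced by ``spectrum''), and the block-diagonalization $V^{-1}CV=\mathrm{diag}(\omega^l a^{1/p})$ with $V=DF'$ is the right way to do it; the commutativity of $a^{1/p}$ with scalars makes the conjugation computation go through verbatim in the operator setting. Two points need attention, one cosmetic and one substantive. The cosmetic one: to conclude $\mathrm{sign}(\omega^l a^{1/p})=\epsilon_l 1$ you cannot invoke Proposition \ref{sign}(8) for the complex scalar $\omega^l$; what you actually need is that $\mathrm{Sp}(a^{1/p})$ lies in the \emph{open sector} of half-angle $\pi/p$, so $\mathrm{Sp}(\omega^l a^{1/p})$ lies in a sector of half-angle $\pi/p$ about the ray of argument $2\pi l/p$, which (since $p\equiv 2 \pmod 4$) never straddles $i\Rdb$ and hence lies entirely in ${\mathbb H}$ or entirely in $-{\mathbb H}$; then (5), together with (8) for $c=-1$, gives $\epsilon_l=\mathrm{sign}(\cos(2\pi l/p))$.

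The substantive point is the final index. Your own (correct) formula $(\mathrm{sign}(C))_{ij}=\frac{1}{p}(a^{1/p})^{i-j}\sum_l\epsilon_l\omega^{(i-j)l}$ gives, for the $1$-$2$ entry, $i-j=-1$ and hence $v=\frac{2\sigma}{p}a^{-1/p}$, not $\frac{2\sigma}{p}a^{1/p}$; the entry with $i-j=1$ that you evaluate is the $2$-$1$ entry. This is not a flaw in your method but a transposition inconsistency in the statement itself: with $a$ in the lower-left corner of $C$ as printed, the conclusion holds for the $2$-$1$ entry (equivalently, for the $1$-$2$ entry of $C^T$, which has $a$ in the upper-right and identities on the subdiagonal). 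The case $p=2$ confirms this: for $C=\left[\begin{smallmatrix}0&1\\ a&0\end{smallmatrix}\right]$ one has $\mathrm{sign}(C)=\left[\begin{smallmatrix}0&a^{-1/2}\\ a^{1/2}&0\end{smallmatrix}\right]$, consistent with the display after Proposition \ref{signmat}, while $\sigma=1$ forces $v=a^{1/2}$. So either state that you are reading off the $2$-$1$ entry of the printed $C$, or replace $C$ by its transpose; as written, the sentence ``the designated off-diagonal entry $v$ (with $i-j=1$)'' silently swaps the indices.
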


Halley's method for the $p$th root of $a$, for $p > 1$, is
$$X_{k+1} =X_k \; \frac{(p-1)X_k^p+(p+1)a}{(p+1)X_k^p+(p-1)a}, $$
with  $X_0 = I$.

In the light of the scalar case \cite{Ianz1},
one would expect  that this method should converge if  the numerical range
of $a$ is inside the region of convergence for the scalar case, which is a much
nicer region than that of the Newton method.   This is the case:

\begin{proposition} \label{halp}   Let $p > 1$ be an integer.   For any strictly 
accretive element $a$, Halley's method for the $p$th root above with initial point $X_0 = I$,
converges to $a^{\frac{1}{p}}$.
\end{proposition}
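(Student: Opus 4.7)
The plan is to mimic the structure of the proof of Proposition \ref{newp}, replacing Newton's rational functions with the corresponding Halley iterates and appealing to the scalar convergence result from \cite{Ianz1}. Define rational functions $q_0 \equiv 1$ and
$$q_{k+1}(z) \;=\; q_k(z)\, \frac{(p-1)q_k(z)^p + (p+1)z}{(p+1)q_k(z)^p + (p-1)z},$$
on the open right half-plane $\mathbb{H}$. The first task is to verify that each $q_k$ is well defined on $\mathbb{H}$ (no zero denominators) and that $q_k(z) \to z^{1/p}$ pointwise on $\mathbb{H}$. Using the standard substitution $u_k(z) = z^{-1/p}\, q_k(z)$ (with $z^{1/p}$ the principal root), the recursion becomes a scalar-independent Halley iteration for the $p$th root of $1$ starting from $u_0 = z^{-1/p}$, and the convergence $u_k \to 1$ on $\mathbb{H}$ (hence $q_k(z) \to z^{1/p}$) is the content of \cite{Ianz1}.

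Second, I would promote this pointwise convergence to uniform convergence on compact subsets of $\mathbb{H}$. For $z$ in a compact subset $E \subset \mathbb{H}$, the iterates $q_k(z)$ stay in a sector $S_{\pi/(2p)}$ and are uniformly bounded (for instance by invoking the sectorial invariance of the conjugated iteration $u_k$), so Vitali's theorem applied to the sequence of analytic functions $q_k$ (which have no poles in $\mathbb{H}$) delivers uniform convergence on compact subsets of $\mathbb{H}$.

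Third, since $a$ is strictly accretive, $W(a)$ is compact and contained in $\mathbb{H}$, so each $q_k$ has no poles in $W(a)$ and Crouzeix's functional calculus yields
$$\|X_m - X_n\| \;=\; \|q_m(a) - q_n(a)\| \;\leq\; K\,\sup_{z \in W(a)} |q_m(z) - q_n(z)| \;\longrightarrow\; 0.$$
Thus $(X_k)$ is Cauchy, with limit $w$. Clearing denominators in the Halley recursion and passing to the limit gives $w[(p+1)w^p + (p-1)a] = w[(p-1)w^p + (p+1)a]$, hence $w^p = a$. By spectral mapping, any $\lambda \in \mathrm{Sp}(w)$ is a limit of $q_k(\mu)$ for some $\mu \in \mathrm{Sp}(a) \subset \overline{\mathbb{H}}$, so $\lambda = \mu^{1/p} \in S_{\pi/(2p)}$; by the uniqueness of the principal root in Theorem \ref{lrs}, $w = a^{1/p}$.

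The main obstacle is the second step: packaging the scalar Halley convergence so that it is uniform on $W(a)$ and carries over through Crouzeix. Pointwise convergence from \cite{Ianz1} is not by itself enough to feed the functional-calculus estimate, and one must rule out both uncontrolled growth of $q_k$ near the imaginary axis and the (harmless but need-to-be-checked) possibility that the denominator of some $q_k$ is small somewhere in $W(a)$. Once a uniform bound and Vitali are in hand, the remaining steps are routine adaptations of the Newton-method argument in Proposition \ref{newp}.
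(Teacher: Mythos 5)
Your skeleton is the same as the paper's: define the scalar Halley iterates $q_k$, import well-definedness and pointwise convergence $q_k(z)\to z^{1/p}$ on ${\mathbb H}$ from Iannazzo, upgrade to local uniform convergence, push through Crouzeix's functional calculus, and identify the limit spectrally. But the step you yourself flag as the ``main obstacle'' is a genuine gap as written. You propose to get local uniform convergence from uniform boundedness of the $q_k$ on compacts, ``by invoking the sectorial invariance of the conjugated iteration $u_k$''; sectorial invariance controls arguments, not moduli, so it does not yield a bound, and the Halley map expands small moduli (near $u=0$ it behaves like $u\mapsto \frac{p+1}{p-1}u$), so boundedness is not obvious and you have not supplied it. The paper avoids boundedness altogether: by \cite[Lemma 5.2]{Ianz1} the conjugated iterates satisfy $|\arg w_{k+1}(z)|\le |\arg w_k(z)|<\frac{\pi}{2p}$, hence $|\arg q_k(z)|<\frac{\pi}{p}\le\frac{\pi}{2}$, so every $q_k$ maps ${\mathbb H}$ into ${\mathbb H}$. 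A family of analytic functions omitting a half-plane is normal by Montel's fundamental normality criterion, and normality together with pointwise convergence gives local uniform convergence by Vitali. (Relatedly, your assertion that $q_k(z)$ lies in $S_{\pi/(2p)}$ is off: it is $w_k=z^{-1/p}q_k$ that lies there, while $q_k$ lies in $S_{\pi/p}$.)

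A second, smaller gap: cancelling $w$ in $2w^{p+1}=2wa$ to conclude $w^p=a$ requires $w$ to be invertible. The paper establishes this by evaluating characters $\chi$ of the closed unital algebra generated by $a$: $\chi(w)=\lim_k q_k(\chi(a))=\chi(a)^{1/p}\neq 0$, so $0\notin{\rm Sp}(w)$. With that in hand, your concluding spectral-mapping argument identifying $w$ as the principal $p$th root is fine.
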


\begin{proof}
Define a sequence of rational functions $$q_{k+1}(z)  =q_k(z) \, 
\frac{(p-1)q_k(z)^p+(p+1)z}{(p+1)q_k(z)^p+(p-1)z}\, , \qquad q_0 = 1,$$
for all $z$ where this makes sense (that is, where $q_k(z)$ is well defined for all $k \in 
\Ndb$). By  \cite[Corollary 5.3]{Ianz1},  $q_k(z)$ is well defined for all $k \in 
\Ndb$ if $z\in 
\Hdb$, and  $q_k(z) \to z^{\frac{1}{p}}$ on $\Hdb$.
If $w_0(z)=z^{-\frac{1}{p}}$ and $w_k(z)=q_k(z)\cdot z^{-\frac{1}{p}}$, then
$$w_{k+1}(z)  =w_k(z) \frac{(p-1)w_k(z)^p+(p+1)}{(p+1)w_k(z)^p+(p-1)}\, ,$$
for any $z\in \Hdb$.
By  \cite[Lemma 5.2]{Ianz1} we have $\vert  \arg{(w_{k+1}(z))}\vert\leq \vert\arg{(
w_{k}(z))}\vert$, thus $\vert\arg{(w_{k+1}(z))}\vert< \frac{\pi}{2p}$ for any $z\in \Hdb.$
Therefore, $\vert\arg{(q_{k+1}(z))}\vert< \frac{\pi}{p},$ which means $q_k(z):\Hdb 
\rightarrow \Hdb$ for any $k\in\Ndb.$  By Montel's  normality criterion (see \cite[Section 3.3]{Beardon}), 
$(q_k(z) )$ is normal.  By Vitali's Theorem, $( q_k(z) )_{k=1}^{\infty}$  converges
locally 
uniformly  to $z^{\frac{1}{p}}$ on $\Hdb.$

Suppose that $W(a) \subset\Hdb.$   By Crouzeix's
functional calculus, for some constant $K$ we have
$$\Vert q_m(a) - q_n(a) \Vert \leq K \Vert q_m - q_n \Vert_{W(a)}, \quad m, n \in \Ndb.$$
Thus $(X_n)$ is Cauchy, and hence convergent to $w$ say.  If $\chi$ is
a character of the closed algebra generated by $1$ and $a$ then
$$\chi(w) = \lim_k \, \chi(X_k) = \lim_k \, q_k(\chi(a)) = \chi(a)^{1/p} \neq 0.$$
Thus $w$ is invertible. Since
$$(p+1)X_{k+1}X_k^{p}-(p-1) X_{k}^{p+1} = ((p+1)X_k-(p-1)X_{k+1}) \,  a,$$ in the 
limit we have
$w^p=  a$.       Now we may finish
as in Proposition \ref{newp}.
\end{proof}

\subsection*{Acknowledgment}   We thank Ilya Spitkovsky for assistance with understanding the result in \cite{MP}, and for other discussions.  We also thank the referee for his/her useful comments.
 This work had origin in the supervision of undergraduate research 
with Ms. Hasti Sajedi, a sophomore student without any linear algebra background beyond basic matrix computations.
She is in the process of 
checking numerically some issues and conjectures related to this topic.  The present paper represents the 
theoretical side of this investigation.   We thank Ms. Sajedi for many discussions, and for running MATLAB codes.
We hope to have a sequel paper more based on numerical examples.   
Our original motivation for this project
 was in hopes that the iterative methods considered here may be applied to deduce results about roots.

\end{document}